\documentclass[11pt]{amsart}

\usepackage{amsmath, amsthm, amssymb}

\usepackage[dvipsnames]{xcolor}
\definecolor{darkgreen}{RGB}{0,150,0}
\definecolor{darkred}{RGB}{200,0,0}
\definecolor{darkblue}{RGB}{0,50,205}
\definecolor{darkpurple}{RGB}{100,15,200}
\definecolor{gold}{RGB}{255,218,9}
\usepackage[colorlinks, linkcolor=darkblue, citecolor=darkgreen, urlcolor=darkgreen]{hyperref}
\hypersetup{hypertexnames=false}

\usepackage{newpxtext}
\usepackage[bigdelims,vvarbb]{newpxmath}  
\usepackage[T1]{fontenc}  

\usepackage{setspace}
\usepackage[margin=1in]{geometry}
             
\usepackage{comment}
\usepackage{thmtools, thm-restate}
\usepackage{overpic}
\usepackage{subcaption}

\usepackage{aliascnt}
\usepackage[noabbrev,capitalize,nameinlink]{cleveref}

\newtheorem{theorem}{Theorem}[section]
\newtheorem{proposition}[theorem]{Proposition}
\newtheorem{lemma}[theorem]{Lemma}

\newtheorem{conjecture}[theorem]{Conjecture}
\theoremstyle{definition}
\newtheorem{definition}[theorem]{Definition}
\newtheorem{example}[theorem]{Example}

\theoremstyle{remark}
\newtheorem{remark}[theorem]{Remark}
\numberwithin{equation}{section}

\usepackage{mleftright}
\newcommand{\set}[2]{\mleft\{\,#1~\middle|~#2\,\mright\}}
\newcommand{\R}{\mathbb{R}}
\newcommand{\C}{\mathbb{C}}

\newcommand{\N}{\mathbb{N}}
\newcommand{\Z}{\mathbb{Z}}

\newcommand{\ve}{\varepsilon}

\newcommand{\tb}{\mathrm{tb}}

\newcommand{\RNum}[1]{\uppercase\expandafter{\romannumeral #1\relax}}

\makeatletter 
 \def\l@subsection{\@tocline{2}{0pt}{2pc}{6pc}{}} \makeatother

\begin{document}

\title{Derivative links in contact topology}

\author{Joseph Breen}
\address{University of Alabama, Tuscaloosa, AL}
\email{jjbreen@ua.edu} \urladdr{https://sites.google.com/view/joseph-breen}

\author{Alexander Zupan}
\address{University of Nebraska - Lincoln, Lincoln, NE}
\email{zupan@unl.edu} \urladdr{http://www.math.unl.edu/~azupan2}

\thanks{JB was partially supported by NSF grant DMS-2038103 and an AMS-Simons Travel Grant. AZ was partially supported by NSF grant DMS-2405301 and a Simons Travel Support award.}

\begin{abstract}
We import the theory of $R$-links and derivative links into contact topology in both the Legendrian and transverse setting. This framework is used to characterize various forms of Lagrangian and symplectic sliceness, and more generally establishes one approach to what we call the Lagrangian slice-ribbon conjecture. Our main theorem asserts that a Legendrian knot with Thurston-Bennequin invariant $-1$ is Lagrangrian slice (resp.\ regularly Lagrangian slice) if and only if it supports a tight transverse (resp.\ Legendrian) $R$-link derivative.
\end{abstract}

\maketitle

\tableofcontents

\section{Introduction}

The slice-ribbon conjecture is one of the most compelling open problems in low-dimensional topology.  In this article, we adapt the approaches in \cite{miller2023handle} from the smooth setting to the setting of contact and symplectic topology in order to better understand potential gradations between sliceness and ribbonness in this context.  In particular, we explore Lagrangian and symplectic sliceness of knots in $(S^3, \xi_{\mathrm{st}})$. The strategy is to initiate a study in contact topology of $R$-links (\cref{subsec:tightRlinks}) and derivative links (\cref{subsec:derivativechar}), which we expect to be of independent interest beyond our own applications. Our main result, involving language that will be defined and contextualized below, is a characterization of Lagrangian sliceness in terms of $R$-links and derivative links.

\begin{theorem}\label{thm:main}
A Legendrian knot $\Lambda\subset (S^3, \xi_{\mathrm{st}})$ with $\mathrm{tb}(\Lambda) = -1$ is Lagrangian slice (resp.\ regularly Lagrangian slice) if and only if it supports a tight transverse (resp.\ Legendrian) $R$-link derivative.
\end{theorem}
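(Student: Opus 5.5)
The plan is to transport the smooth argument of \cite{miller2023handle} to the contact setting, using the Weinstein handle calculus for Lagrangian cobordisms and its symplectic/transverse analogue. Recall the smooth picture: a knot $K$ is slice iff it has a derivative link $L$ (a link on a genus-$g$ Seifert surface, or more generally a link of $0$-framed components) such that $0$-surgery on $L$ gives $\#^n S^1\times S^2$, i.e.\ $L$ is an $R$-link. The characterization comes from a handle decomposition of the slice disk complement. I expect the contact version to work as follows. Since $\mathrm{tb}(\Lambda)=-1$, a Lagrangian slice disk $D\subset B^4$ has a neighborhood whose exterior is built from $(S^3,\xi_{\mathrm{st}})$ by a Weinstein $2$-handle attached along $\Lambda$ (contact framing $\mathrm{tb}-1=-2$). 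This is essentially the Lagrangian-disk-as-core picture. I will use this as the bridge between sliceness and surgery.

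For the forward direction, start from a (regular) Lagrangian slice disk $D$. Using a decomposition of $D$ into elementary Lagrangian cobordisms, namely Legendrian isotopy, pinch moves (saddles), and minimum caps, I will produce a Legendrian derivative link. Reading the movie of $D$ backwards from the unlinks at the minima, the saddles are dual to Legendrian arcs. Band moves on $\Lambda$ produce a Legendrian unlink $U$ with $\mathrm{tb}=-1$ components. The cores of the dual bands, pushed onto a Legendrian ribbon surface for $\Lambda$, give a link $L$ on a Seifert-type surface. In the regular case, regularity gives a Weinstein handle structure on $B^4\setminus \nu D$ with only $1$- and $2$-handles. I will then show that the $2$-handles are attached along a Legendrian link whose contact $(-1)$-surgery, combined with the $1$-handles, yields tight $\#^n S^1\times S^2$. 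This is exactly the tight Legendrian $R$-link condition. In the non-regular case, only a symplectic (not Weinstein) structure is available. There I replace Legendrian by transverse: the handle attachments are along transverse pushoffs, and I will use admissible transverse surgery, with tightness of the result coming from strong fillability by the complement.

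For the converse, given a tight Legendrian $R$-link derivative $L$ for $\Lambda$, I attach Weinstein $2$-handles along $L$ to $(S^3,\xi_{\mathrm{st}})\times I$. The top boundary is a tight $\#^n S^1\times S^2$, which by uniqueness of tight contact structures on $\#^n S^1\times S^2$ is the standard one. Eliasberg's theorem then says it has a unique Stein filling, $\natural^n S^1\times B^3$, so it can be capped off. Within the cobordism, $\Lambda$ with $L$ attached bounds a Lagrangian surface. Surgering along the derivative curves, whose handles are now cancelled, yields a Lagrangian disk in $B^4$, and this disk is regular because every piece is Weinstein. The transverse case is handled the same way, using symplectic handles along transverse links and then capping the resulting tight $\#^n S^1\times S^2$ via its unique symplectic filling (Gromov/Eliasberg).

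The main obstacle is the forward direction in the non-regular case, specifically extracting a transverse $R$-link from an arbitrary Lagrangian disk. Two things must be checked there: that the derivative link can be made transverse with the correct framings, and that the resulting surgered manifold is tight. For tightness I would first try fillability, arguing that the surgered manifold is the boundary of a symplectic domain obtained by removing a Weinstein neighborhood of $D$ from $B^4$.
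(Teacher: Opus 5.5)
Your plan has genuine gaps in both directions.

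In the forward direction, decomposing $D$ into Legendrian isotopies, pinch moves and minimum caps is exactly decomposability. That is not known for regular Lagrangian disks, let alone arbitrary ones; these are the open reverse implications in the Lagrangian slice--ribbon chain. So no derivative can be read off this way.

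Your surgery bookkeeping is also reversed. Removing $\nu D$ performs contact $(+1)$ surgery on $\Lambda$ (topological framing $\mathrm{tb}+1=0$), not a Weinstein $2$-handle attachment with framing $-2$. In the regular case, the attaching link of the $2$-handles of $B^4\setminus\nu D$ lives in $\#^k(S^1\times S^2)$ and surgers to $\partial(B^4\setminus\nu D)\cong S^3_\Lambda(0)$. That is not $\#^n(S^1\times S^2)$ unless $\Lambda$ is the unknot.

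The $R$-link that actually exists lives in $S^3$. Regularity gives a Weinstein structure on $B^4$ in which $\Lambda$ is the belt sphere of a $2$-handle, and $\Lambda$ together with the other belt spheres $L$ is a Legendrian $R$-link (\cref{lemma:reg_slice_R_link_component}). The real work is then to place $L$ on a Seifert surface of $\Lambda$ as a derivative without losing tightness:
\begin{itemize}
\item tube boundary tori of standard neighborhoods of $L$ onto a Seifert surface avoiding $L$, to get a partial derivative;
\item compress the capped-off surface in the tight surgered manifold (\cref{lemma:pd_to_der});
\item trade the bypass destabilizations unavailable in $S^3$ for stabilizations of $F$ along bypass arcs, which fracture the new components down to $\mathrm{tb}=-1$;
\item use Type-$R$ handleslides along dividing arcs (\cref{lemma:div_curve_h_slide}) to split off max-tb unknots.
\end{itemize}
Your phrase ``pushed onto a Legendrian ribbon surface'' skips all of this.

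For non-regular disks there is no handle structure on the complement at all. Even smoothly, $R$-link derivatives characterize handle-ribbon, not slice, contrary to your opening recollection. So ``replacing Legendrian by transverse'' has nothing to act on. The paper instead passes to the transverse pushoff. It is symplectically slice with $\mathrm{sl}=-1$, hence the closure of a quasipositive braid (Rudolph, Boileau--Orevkov). An explicit max-sl unlink derivative is then read off from concentric radial circles on the resolved Bennequin ribbon surface (\cref{lemma:qp_unlink_derivative}).

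The converse has the same direction problem. Attaching Weinstein $2$-handles along $L$ is contact $(-1)$ surgery (framing $-2$), so the top boundary is not $\#^n(S^1\times S^2)$. In any case a filling such as $\natural^n(S^1\times B^3)$ cannot cap off a convex upper end. The correct construction runs upside down: for an $R$-link $L'$, $2$-handles along Reeb push-offs cancel the $(+1)$ surgeries (\cref{lemma:surg_cancel}). This gives a cobordism from $S^3_{L'}((+1))$ to $S^3$, filled from below by $\natural(S^1\times B^3)$ and identified with $B^4$ by uniqueness of Weinstein fillings of $S^3$. Likewise, ``surgering a Lagrangian surface along the derivative curves'' is not an operation you can carry out.

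The missing key step, where $\mathrm{tb}(\Lambda)=-1$ does its work, is this. Capping $F\setminus L$ with convex meridian disks of the surgery tori yields a Seifert disk for $\Lambda$ in the tight manifold $S^3_L((+1))$ inducing the $F$-framing. So $\Lambda$ is a $\mathrm{tb}=-1$ unknot there, hence $\Lambda\cup L$ is itself a Legendrian $R$-link, and $\Lambda$ is regularly slice by \cref{lemma:reg_slice_R_link_component}.

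The transverse analogue is a self-linking count: each derivative component contributes one positive and one negative elliptic point after capping. This shows the pushoff is a max-sl unknot in the surgered manifold, hence a component of a transverse $R$-link. Dual admissible surgeries (\cref{lemma:admissible} with Gay--Wendl) then give symplectic co-core disks in a weak filling of $S^3$, which is $B^4$ by Gromov (\cref{lemma:tranRlink_sympl_slice}).

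Even so, this only produces a \emph{symplectic} disk for the transverse pushoff. Getting back to a \emph{Lagrangian} disk for $\Lambda$ requires the Mark--Tosun theorem: a smoothly slice $\mathrm{tb}=-1$ Legendrian with quasipositive transverse pushoff is Lagrangian slice. Your plan never invokes it.
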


\noindent In a forthcoming follow-up paper \cite{BZ1forthcoming}, we use \cref{thm:main} to establish a case of a conjecture of Cornwell, Ng, and Sivek \cite[Conjecture 4.1]{cornwell2016obstructions} on characterizing Lagrangian sliceness, and a case of a well-known question featured in the K3 problem list \cite[Problem 1.70]{K3} on sharpness in the slice-Bennequin inequality.

\subsection{Context}

The motivation for this work is a Lagrangian version of the slice-ribbon conjecture, which we discuss briefly before expanding on our main results.

A knot $K\subset S^3$ is \emph{slice} if it bounds a properly and smoothly embedded \emph{slice disk} $D^2 \subset B^4$, and is \emph{ribbon} if it bounds an immersed disk $D^2 \looparrowright S^3$ with only ribbon singularities. Ribbon knots are slice, and Fox's slice-ribbon conjecture \cite{fox1962knot} asking if the converse holds has become one of the most famous open questions in knot theory; see \cite{gilmer1982slicegenus,CG3,hass1983sliceribbon,miyazaki,lisca2007ribbon,gompf2010property2R,greene2011pretzel,lecuona2012montesinos,abe2016fibered,baker2016fibered,zemke2019ribbonKFH,aceto2021pretzel,dai2024figureeight}.

There are intermediaries between ribbonness and sliceness of interest. For instance, a knot is \emph{handle-ribbon} if it bounds a disk in $B^4$ whose exterior is built from $4$-dimensional $0$-, $1$-, and $2$-handles. As ribbon implies handle-ribbon \cite{gordon1981ribbon}, a refinement of the slice-ribbon conjecture is summarized by the chain of implications
\begin{equation}\label{eq:SR}
\{\text{ribbon}\} \Rightarrow \{\text{handle-ribbon}\} \Rightarrow \{\text{slice}\}.  
\end{equation}
The validity of each reverse implication is open \cite{miller2023handle}.

In contact and symplectic topology, a rich interaction of knots and surfaces manifests in the study of exact Lagrangian surfaces in $(B^4, \omega_{\mathrm{st}})$ bounded by Legendrian knots in $(S^3, \xi_{\mathrm{st}})$ \cite{eliashberg1996local,chantraine2010concordance,hayden2015fillable,cornwell2016obstructions,ekholm2016cobordisms,shende2019cluster,conway2021disks,casals2022infinitely}. Analogous to the refined slice-ribbon conjecture in \eqref{eq:SR}, there are gradations of Lagrangian sliceness, which we briefly explain below:
\begin{equation}\label{eq:SSR}
\{\text{decomposably slice}\} \Rightarrow \{\text{regularly slice}\} \Rightarrow \{\text{Lagrangian slice}\}.     
\end{equation}
A \emph{decomposable} Lagrangian disk is one built from certain local surgery moves on knot projections which induce Lagrangian $0$-handle and $1$-handle attachments \cite{ekholm2016cobordisms}. This has historically been the primary method for constructing exact Lagrangian surfaces and is a reasonable Lagrangian analogue of ribbonness. Informally, a \emph{regular} Lagrangian disk in $(B^4, \omega_{\mathrm{st}})$ is one whose exterior admits a Weinstein structure \cite{eliashberg2020flexible}, i.e.\ a certain $4$-dimensional symplectic $0$-, $1$-, $2$-handle decomposition. The parallel to handle-ribbonness is clear.

Decomposable Lagrangian disks are regular \cite{conway2021disks}, but we do not know if the converse holds. Likewise, we do not know if every Lagrangian disk is regular \cite{eliashberg2020flexible}. In fact, under any interpretation (for prescribed disks, for prescribed Legendrian knots, or simply for topological knot types) each reverse implication in the "Lagrangian slice-ribbon conjecture" \eqref{eq:SSR} is open. See \cite[\S 2]{breen2024regularlysliceimpliesoncestably} for an additional survey and comparison between \eqref{eq:SR} and \eqref{eq:SSR}.

Just as the class of smoothly slice knots can be further widened (e.g.\ to topologically or algebraically slice knots), we highlight a widening of Lagrangian sliceness for topological knot types. A transverse knot in $(S^3, \xi_{\mathrm{st}})$ is \emph{symplectically slice} if it bounds a symplectic disk in $(B^4, \omega_{\mathrm{st}})$, and a topological knot type $K$ is \emph{symplectically slice} if it admits a symplectically slice transverse representative. Work of Eliashberg \cite{eliashberg1995pushoff} implies that if a knot type $K$ is Lagrangian slice, then it is symplectically slice. This implication is not reversible: for example, $m(8_{20})$ is symplectically slice, but does not admit a Legendrian representative which is Lagrangian slice.

\subsection{Tight \(R\)-links}\label{subsec:tightRlinks}

The first element of our approach to studying sliceness in contact topology is the introduction of $R$-links into the Legendrian and transverse setting.

Recall that an \emph{$R$-link}, named after Gabai's property $R$ theorem \cite{gabai1987propertyR}, is a $g$-component link along which some integral surgery produces $\#^g (S^1 \times S^2)$. Gabai had shown that the only $R$-knot is the $0$-framed unknot. The generalized property $R$ conjecture \cite{gompf2010property2R} asks whether every $R$-link is handleslide equivalent to a $0$-framed unlink, veracity of which would prove that handle-ribbon implies ribbon. This conjecture is wide open, as are various weaker "stabilized" versions \cite{meier2016classification}. 

In this article, we import $R$-links into contact topology in two ways. First, recall that $\#^g (S^1 \times S^2)$ has a unique tight (in fact, Stein fillable) contact structure. 

\begin{definition}\label{def:tightlegRlink}
Let $L = L_1 \cup \cdots \cup L_g\subset (S^3, \xi_{\mathrm{st}})$ be a Legendrian link. We say that $L$ is a \emph{tight Legendrian $R$-link} if contact-$(+1)$ surgery along $L$ produces tight $\#^g (S^1 \times S^2)$.
\end{definition}

\begin{definition}\label{def:tighttranRlink}
Let $T = T_1 \cup \cdots \cup T_g\subset (S^3, \xi_{\mathrm{st}})$ be a transverse link. We say that $T$ is a \emph{tight transverse $R$-link} if inadmissible transverse $0$ surgery along $T$ produces tight $\#^g (S^1 \times S^2)$.
\end{definition}

\noindent 
We will often abbreviate the language to \emph{Legendrian $R$-link} and \emph{transverse $R$-link}, with the adjective "tight" left implicit. On the other hand, we say \emph{tight $R$-link} to broadly refer to both types. 

Contact and transverse surgery will be reviewed in \cref{sec:background}. For now, we recall that contact surgery coefficients are measured relative to the framing induced by the contact structure; the topological Dehn surgery coefficient in \cref{def:tightlegRlink} is necessarily $0$.

Max-tb Legendrian unlinks are Legendrian $R$-links \cite{Ding2009HandleMI}, and max-sl transverse unlinks are transverse $R$-links \cite{lisca2011transverseinvariants,conway2019transverse}. We can moreover rephrase known results in the literature \cite[Theorem 1.7, Theorem 1.8(ii)]{casals2024steintrace} in terms of property $R$: the max-tb Legendrian unknot is the only Legendrian $R$-knot, and the max-sl transverse unknot is the only transverse $R$-knot.

With stable versions of generalized property $R$ in mind, we call two Legendrian $R$-links $L, L'$ \emph{stably equivalent} if there are split max-tb unlinks $U$ and $U'$ such that $L \sqcup U$ is contact-$(+1)$ handleslide equivalent to $L'\sqcup U'$. Appealing to the uniqueness of Weinstein fillings of $(S^3, \xi_{\mathrm{st}})$, we prove: 

\begin{theorem}[Stable Legendrian Generalized Property $R$]\label{thm:stable_leg_gen_prop_r}
Every Legendrian $R$-link is stably equivalent to a max-tb unlink. 
\end{theorem}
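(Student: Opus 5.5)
Let $L=L_1\cup\cdots\cup L_g$ be a Legendrian $R$-link. The plan is to convert contact-$(+1)$ surgery on $L$ into a Weinstein cobordism, cap it off using uniqueness of fillings of tight $\#^g(S^1\times S^2)$, and then use uniqueness of Weinstein fillings of $(S^3,\xi_{\mathrm{st}})$ to relate two handle decompositions of $B^4$.

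\emph{Step 1: reverse the surgery.} Contact-$(+1)$ surgery along $L$ can be undone by contact-$(-1)$ surgery along the Legendrian push-offs $L'$ of the components of $L$, viewed as Legendrians in the surgered manifold. So $(S^3,\xi_{\mathrm{st}})$ is obtained from tight $\#^g(S^1\times S^2)$ by Legendrian surgery on a $g$-component Legendrian link $L'$. This gives a Weinstein cobordism $W$ from tight $\#^g(S^1\times S^2)$ to $(S^3,\xi_{\mathrm{st}})$ made of $g$ Weinstein $2$-handles.

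\emph{Step 2: cap off.} Tight $\#^g(S^1\times S^2)$ is filled by $\natural^g(S^1\times B^3)$, drawn as $g$ Weinstein $1$-handles (dotted circles or standard $1$-handle pairs). Gluing gives a Weinstein domain $X=\natural^g(S^1\times B^3)\cup W$ with boundary $(S^3,\xi_{\mathrm{st}})$. By Gromov--Eliashberg (and the Weinstein uniqueness results used in the paper), $X$ is Weinstein deformation equivalent to the standard $B^4$. By Cieliebak--Eliashberg, Weinstein homotopic domains have Weinstein handle decompositions related by Weinstein handle moves: isotopies, handleslides, creation and cancellation of $1$/$2$ pairs, and $0$/$1$ pairs, which are harmless in dimension $4$. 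The difficulty is that a homotopy of Weinstein structures may pass through birth--death singularities. I would invoke the $2$-index Cerf-type result in that framework to get a sequence of such moves from the diagram for $X$, with $g$ $1$-handles and the $2$-handles along $L'$, to the empty diagram.

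\emph{Step 3: translate back to $L$.} This step needs care. In Gompf's standard form, the diagram ($1$-handles plus Legendrian $(-1)$-surgery on $L'$) should be equivalent, via the known correspondence, to the contact $(+1)$-surgery diagram on $L$ in $S^3$. Concretely, replacing a $1$-handle by contact-$(+1)$ surgery on a max-tb unknot (Ding--Geiges) turns the Weinstein diagram into a contact surgery diagram on $U\sqcup L$ with mixed $\pm1$ coefficients. I then have to check that each Weinstein move becomes a contact $(+1)$ handleslide, a stabilization by a split max-tb unknot, or a cancellation. The main obstacle is showing that the equivalence can be arranged to use only $(+1)$-handleslides among $(+1)$-components, together with split unknots on both ends. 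My first attempt would be to run the sequence in the ``upside-down'' picture, turning $X$ over. There, $1$-handles correspond to $(+1)$-surgery on max-tb unknots and the $2$-handles on $L'$ correspond to $(+1)$-surgery on $L$. Each $1$/$2$ cancellation then becomes the deletion of a $(+1)$ unknot with its dual, and each handleslide of $2$-handles over $2$-handles becomes a contact $(+1)$ handleslide (Ding--Geiges). Collecting the unknots created or destroyed along the way into $U$ and $U'$ then shows that $L\sqcup U$ is $(+1)$-handleslide equivalent to $U'$, a max-tb unlink.
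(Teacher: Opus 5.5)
Your proposal is correct and follows the paper's route once you commit to the ``upside-down'' bookkeeping. You identify $L$ with the belt spheres of the $2$-handles of $\natural^g(S^1\times B^3)\cup W$ and use uniqueness of Weinstein fillings of $(S^3,\xi_{\mathrm{st}})$ to get a Weinstein homotopy to the radial ball; then a $1$/$2$ birth adds a split max-tb unknot, and a slide of $\Lambda_1$ over $\Lambda_2$ becomes a contact-$(+1)$ slide of the belt sphere $L_2$ over $L_1$. One correction: the $1$-handles contribute no components to the belt-sphere link, since the split unknots are belt spheres of $2$-handles in born or dying pairs, and the paper avoids analyzing deaths mid-homotopy by postponing all cancellations to a final geometrically canceling structure whose belt spheres form a max-tb unlink.
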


\noindent Consequently, Legendrian $R$-links satisfy the stable generalized property $R$ conjecture 
\cite{meier2016classification}. 

There are many questions about tight $R$-links for which we do not know the answer. We record some examples here: 

\begin{restatable}{question}{tightRlink}\label{q:tightRlink}
Let $L = L_1 \cup \cdots \cup L_g \subset (S^3, \xi_{\mathrm{st}})$ be a Legendrian link with $g\geq 2$ such that 
\begin{enumerate}
    \item $L$ is a topological $R$-link, and 
    \item $\mathrm{tb}(L_i) = -1$ for each $i=1, \dots, g$.
\end{enumerate}
Is $L$ a Legendrian $R$-link? That is, is contact-$(+1)$ surgery along $L$ necessarily tight?   
\end{restatable}

\begin{restatable}[Legendrian Generalized Property $R$]{question}{genLegPropR}\label{q:genLegPropR}
Is every Legendrian $R$-link contact-$(+1)$ handleslide equivalent to a max-tb unlink?  
\end{restatable}

\begin{restatable}[Stable Transverse Generalized Property $R$]{question}{genTranPropR}\label{q:genTranPropR}
Is every transverse $R$-link stably equivalent to a max-sl unlink?  
\end{restatable}

\subsection{Derivative characterizations of contact sliceness}\label{subsec:derivativechar}

The second element of our approach is to import the notion of derivative links into the contact setting. Given a knot $K= \partial F\subset S^3$ bounding an orientable Seifert surface, recall that a \emph{derivative}, or \emph{metabolizer}, of $K$ is a link $J\subset F$ such that $F - J$ is connected and planar and the Seifert form of $F$ vanishes on the summand of $H_1(F; \Z)$ generated by $J$. Existence of such a link is necessary for sliceness, and the study of sliceness via derivatives has a rich history \cite{levine1969knotcobordismcodim2,CG1,gilmer1983sliceknots,CG2,cochran2010derivatives,gilmer2013surgerycurves,cochran2015davis,meier2022gensquare,miller2023handle}. We highlight the following derivative characterizations of sliceness in \eqref{eq:SR}, in increasing order of difficulty: 

\begin{proposition}[\cite{cochran2015davis,miller2023handle}]\label{lemma:ribbon_derivative}
A knot is ribbon if and only if it admits a Seifert surface with an unlink derivative.    
\end{proposition}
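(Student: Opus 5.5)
We prove the two implications separately, working with the standard picture of a Seifert surface as a disk with bands attached.

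\emph{Ribbon $\Rightarrow$ unlink derivative.} Suppose $K$ bounds a ribbon disk. Choose a ribbon presentation: $K$ is obtained from a split unlink $U = U_0 \cup \cdots \cup U_g$, bounding disjoint disks $D_0,\dots,D_g$, by attaching $g$ bands $b_1,\dots,b_g$ so that the result is connected. Such a presentation exists by a standard isotopy of the immersed ribbon disk. Now form a Seifert surface $F$ for $K$ as follows. Start from $D_0\cup\cdots\cup D_g\cup b_1\cup\cdots\cup b_g$. This is a disk, but it is immersed, since the bands may pass through the disks $D_i$. Resolve each ribbon intersection by adding a tube, in the usual way. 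Equivalently, take $F$ to be the boundary sum of $K$'s disk-band surface with tubes around the bands. Concretely, for each $i\ge1$ we tube $D_i$ to the rest along a small arc, so that $F$ contains a curve $J_i$ parallel to $\partial D_i = U_i$, and these curves together form a copy of the unlink $U_1\cup\cdots\cup U_g$.

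The cleaner route, following the argument of Cochran--Davis, is different. Take $F$ to be the union of the $D_i$ and the bands, where each band passing through a disk $D_j$ is rerouted by removing a small disk from $D_j$ around the crossing and adding a tube around the band. The curves $J_i = \partial D_i$ for $i=1,\dots,g$, pushed slightly into $F$, then bound disjoint disks in the complement of one another. Hence $J$ is an unlink, and it has vanishing Seifert form, because the pushoffs also bound disjoint disks. Cutting $F$ along $J$ leaves a planar connected surface. We check this by Euler characteristic and connectivity, since $F - J$ deformation retracts to $D_0$ with bands and annuli.

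\emph{Unlink derivative $\Rightarrow$ ribbon.} Let $J = J_1\cup\cdots\cup J_g \subset F$ be an unlink derivative. Surger $F$ along $J$: remove annular neighborhoods of the $J_i$ in $F$ and glue in two parallel copies of the disks bounded by $J$. These copies are the spanning disks of the unlink, pushed off using the surface framing. Two facts make this work. First, the Seifert form vanishes on $J$, so the surface framing of each $J_i$ agrees with the $0$-framing. Second, $\operatorname{lk}(J_i,J_j^+)=0$, so the unlink's spanning disks can be chosen compatibly. The resulting surface has Euler characteristic $\chi(F)+2g = 1$ and is connected and planar, so it is a disk.

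This disk is only immersed. The capping disks meet $F$, and each other, in interior arcs and circles. The main obstacle is to arrange that all of these singularities are ribbon singularities. The plan is to push the interiors of the capping disks slightly into $B^4$, so that the minima are the $2g$ capping disks. The rest of $F - J$ is a planar surface, which we decompose radially as saddles. This yields a ribbon disk in $B^4$ with only index $0$ and $1$ critical points, that is, a ribbon knot. The same motion-picture argument also handles the forward direction, read in reverse.
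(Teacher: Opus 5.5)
\textbf{The gap is in the forward direction (ribbon $\Rightarrow$ unlink derivative), at the choice of derivative curves.} Taking $J_i=\partial D_i$, one curve per minimum other than $D_0$, is wrong in general. If some $D_i$ with $i\geq 1$ is pierced by no band, then $J_i$ bounds a subdisk of $D_i\subset F$, so $F-J$ is disconnected. This already happens for the trivial band sum of a two-component unlink, where $F$ is a disk and the only derivative is empty.

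The underlying problem is a miscount. Each resolution of a ribbon singularity adds a handle, so $g(F)$ equals the number $m$ of ribbon intersections, which is unrelated to the number $g$ of bands. But requiring $F-J$ to be connected and planar forces $|J|=g(F)$. So your Euler characteristic and connectivity check cannot hold in general. Moreover, the resolution by ``adding a tube around the band'' is not specified: a tube surrounding the band has to close up somewhere on $F$. Your closing remark that the motion-picture argument, read in reverse, gives the forward direction does not fill this gap. Reversing the movie just returns the immersed disk in $S^3$, and producing an embedded surface with an unlink derivative is exactly the missing step.

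\textbf{The fix.} The curves must be indexed by ribbon singularities rather than by minima, as in the sketch preceding the proof of \cref{thm:decKauffman}. For each singular arc $\gamma_i$, let $U_i$ be the small circle in the pierced sheet encircling $\gamma_i$, and perform a cellar-door resolution inside $N(\gamma_i)$. Then:
\begin{itemize}
\item the $U_i$ bound disjoint small disks missing every pushoff $U_j^{\pm}$, so they form an unlink on which the Seifert form vanishes;
\item there are exactly $m=g(F)$ of them;
\item cutting along them leaves a connected planar surface, because the resolved piece inside each $N(\gamma_i)$ is an annulus joining $U_i$ to the band on both sides of the crossing.
\end{itemize}
Your $\partial D_i$ are isotopic in $F$ to these circles when $D_0$ is unpierced and each $D_i$, $i\geq 1$, is pierced exactly once, which is why your construction looks plausible.

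\textbf{The converse direction} is fine in outline, with two corrections. First, the caps are parallel copies of disjoint disks for the $F$-framed, hence $0$-framed, $2$-cable of the unlink $J$, so they do not meet each other. Second, once the caps are pushed deeper into $B^4$ than $F-N(J)$, there are no singularities left to make ribbon. All you then need is that $F-N(J)$ is connected with nonempty outer boundary $K$, so its height function can be chosen with only index-$1$ critical points.
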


\begin{theorem}[\cite{miller2023handle}]\label{thm:miller_zupan}
A knot is handle-ribbon if and only if it admits a Seifert surface with an $R$-link derivative.
\end{theorem}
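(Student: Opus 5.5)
The plan is to prove both directions by translating between $R$-link derivatives and handle decompositions of the disk exterior, following the strategy of \cite{miller2023handle}.

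\emph{($\Leftarrow$)} Suppose $K=\partial F$ and $J=J_1\cup\dots\cup J_g\subset F$ is an $R$-link derivative, where $F$ has genus $g$. Since the Seifert form vanishes on the span of $J$, the surface framing of each $J_i$ equals its Seifert framing. Surgery on $J$ with these framings therefore yields $\#^g(S^1\times S^2)$. The knot $K$ survives in this manifold and bounds the surgered surface $F_J$, which is a planar surface capped off by the surgery disks, hence a disk. Now build the exterior of a slice disk for $K$:
\begin{enumerate}
\item attach $2$-handles to $B^4$ along $J$, with the framings above;
\item the new boundary is $\#^g(S^1\times S^2)$, which bounds $\natural^g(S^1\times B^3)$;
\item turn this filling upside down, so that it contributes $3$- and $4$-handles.
\end{enumerate}
This gives a manifold $X\cong B^4$. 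To see this, apply Gabai/Property $R$-type reasoning, or more simply note that $X$ is a contractible manifold with boundary $S^3$, hence $X\cong B^4$ by Freedman in the topological category. This point needs care in the smooth category. The cleaner route is the one in \cite{miller2023handle}: the disk bounded by $K$ in the surgered manifold can be pushed into the $2$-handle cobordism $W$. Then $D=$ (the pushed-in surface $F$ compressed along the cores of the $2$-handles) lies in $W\cup(3,4\text{-handles})$. Its complement is built from the $1$-handles dual to the $3$-handles together with the $2$-handles, which is exactly the handle-ribbon condition. One must still identify the total space with $B^4$. Here I would cite the fact that $W\cup\natural^g(S^1\times B^3)$ union the dual handles is the double-type description with boundary $S^3$. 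The key observation is that cutting along $D$ and reassembling yields $B^4$ minus a neighborhood of $D$.

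\emph{($\Rightarrow$)} Given a handle-ribbon disk $D$, its exterior $E$ has a handle decomposition with only $0$-, $1$-, and $2$-handles. Turning $E$ upside down and comparing with $B^4=E\cup\nu D$, I would find a Seifert surface as follows. Take a level set of a Morse function on $E$ lying above the $0$- and $1$-handles, which is $\#^k(S^1\times S^2)$ minus $K$, and trace $K$ down to produce the surface. Concretely, I would take a ribbon-presentation-like movie: the $2$-handles of $E$ give a link $J$ in $\#^k(S^1\times S^2)$ surgered from the meridian-side, and $D$ meets level sets in bands. Tubing these gives $F$ with $J\subset F$ such that surgery on $J$ is $\#^g(S^1\times S^2)$ and $F-J$ is planar.

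The main obstacle is the $(\Rightarrow)$ direction: arranging that the $2$-handle attaching link lies on a Seifert surface with surface framing, and that the remaining surface is planar. I would handle this by the trick of \cite{miller2023handle}. First cancel or slide the $1$-handles of $E$ against the meridian $2$-handle so that the dual picture has $K$ together with $J$ in $S^3$. Then construct $F$ by attaching, to the disk in $\#^g(S^1\times S^2)$, tubes around the surgery solid tori, which adds genus exactly equal to the number of components of $J$.
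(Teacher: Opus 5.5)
Both directions have genuine gaps, and the more serious one is in $(\Rightarrow)$.

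\textbf{The $(\Rightarrow)$ direction.} Your dual handle picture does show that $K$ is a component of an $R$-link $K\cup J$, namely the belt spheres of the $2$-handles of $B^4=E\cup\nu(D)$. Your last step, however, assumes that $J$ alone is an $R$-link and that $K$ bounds a disk in $S^3_J(0)\cong\#^g(S^1\times S^2)$. Neither holds in general. Sliding one component of a $0$-framed $2$-component unlink over the other can turn it into a square knot $Q$, so $Q\cup U$ is an $R$-link. Taking $K=U$ and $J=Q$, Gabai's theorem says $S^3_J(0)\not\cong S^1\times S^2$.

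Even when such a disk $\Delta$ exists, it usually meets each dual knot $J_i^*$ many times. Tubing along the $J_i^*$ then puts many parallel copies of $J_i$ on $F$. These do not form a derivative, since they are mutually parallel and disconnect $F$, and the added genus is not $|J|$.

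The missing idea is the upgrade from a partial derivative to a derivative, \cite[Proposition~3.2]{miller2023handle}; \cref{lemma:pd_to_der} is its contact version.
\begin{enumerate}
\item Take a Seifert surface for $K$ disjoint from $J$ and tube it to the tori $\partial\nu(J_i)$, so that $0$-framed push-offs of $J$ form a partial derivative.
\item Cap off $F-\nu(K\cup J)$ in $S^3_{K\cup J}(0)$ and compress any positive-genus component, which is always possible in $\#^n(S^1\times S^2)$.
\item Add the compressing curves to the partial derivative. This keeps the $R$-link property, because each new curve is a $0$-framed unknot in the surgered manifold. Iterate until the complement is planar.
\item Handleslide components along arcs in the planar pieces to split off unknots, so that the survivors cut $F$ into a connected planar surface.
\item Check that the survivors form an $R$-link: $K$ is a $0$-framed unknot after surgery on them, and prime decompositions are unique.
\end{enumerate}
Your level-set/movie paragraph does not substitute for any of this; for one thing, $D$ does not lie in $E$.

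\textbf{The $(\Leftarrow)$ direction.} Your first construction is wrong as stated. Attaching $2$-handles to $B^4$ along $J$ and capping with $\natural^g(S^1\times B^3)$ gives a closed homotopy $4$-sphere, not a ball.

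Your ``cleaner route'' is the right idea once phrased properly. $K$ is a $0$-framed unknot in $S^3_J(0)$, so $K\cup J$ is an $R$-link. Then $K$ bounds the core of its $2$-handle, extended by the collar $K\times I$, in the homotopy ball $X=(S^3\times I)\cup(2\text{-handles along }K\cup J)\cup(3\text{- and }4\text{-handles})$. The exterior of this disk is built from $0$-, $1$-, and $2$-handles; this is the smooth analogue of \cref{lemma:reg_slice_R_link_component}.

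The identification $X\cong B^4$, however, is not justified. No ``double-type description'' does this, and Freedman only gives a homeomorphism. Smooth standardness of homotopy $4$-balls built without $3$-handles is open: \cref{remark:SGPRC} lists it as a consequence of the SGPRC. \cref{remark:lSGPRC} stresses that it is available only in the symplectic setting, via uniqueness of Weinstein fillings, and that is exactly the input the paper uses in its contact version. So, as written, this direction produces a disk with $0$-, $1$-, $2$-handle exterior in a homotopy $4$-ball. To conclude in the standard $B^4$ you would need a genuinely new input, which your argument does not supply.
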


\begin{conjecture}[Stable Kauffman conjecture \cite{cochran2015davis}]\label{conj:stabKauff}
A knot is slice if and only if it admits a Seifert surface with a slice link derivative. 
\end{conjecture}

\noindent Note that \cref{conj:stabKauff} is a generalization of the well-known conjecture that a knot is smoothly slice if and only if its untwisted Whitehead double is smoothly slice \cite[Problem 1.38]{Kirby1995ProblemsIL}. Indeed, the standard genus $1$ Seifert surface of $\mathrm{Wh}_0(K)$ admits a derivative isotopic to $K$ (by viewing the original companion knot $K$ as the "core" of the untwisted band). If $K$ is slice, then $\mathrm{Wh}_0(K)$ is slice. \cite[Problem 1.38]{Kirby1995ProblemsIL} (and the "only if" direction of \cref{conj:stabKauff}) asks if the converse holds.

Our first derivative characterization is a Weinstein analogue of \cref{thm:miller_zupan}.

\begin{restatable}{theorem}{legKauffman}\label{thm:legKauffman}
Let $\Lambda \subset (S^3, \xi_{\mathrm{st}})$ be a Legendrian knot with $\mathrm{tb}(\Lambda) = -1$. Then $\Lambda$ is regularly Lagrangian slice if and only if it admits a Seifert surface with a Legendrian $R$-link derivative.
\end{restatable}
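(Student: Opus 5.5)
We would model the argument on the proof of \cref{thm:miller_zupan} in \cite{miller2023handle}, translating each topological step into its Weinstein counterpart. First we need to say what "a Seifert surface with a Legendrian $R$-link derivative" means for a Legendrian knot. We take $F$ to be a Seifert surface for $\Lambda$ and $L\subset F$ a derivative realized as a Legendrian link. Since $\mathrm{tb}(\Lambda)=-1$, we may arrange $F$ in a standard form (ribbon of a Legendrian graph) so that the contact framing on each component $L_i$ agrees with the surface framing. The Seifert form vanishes on $L$, so the surface framing is $0$. Consequently contact-$(+1)$ surgery on $L$ is topologically $0$-surgery along the surface framing, matching the smooth setup.

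For the forward direction, take a regular Lagrangian disk $D\subset B^4$ with $\partial D=\Lambda$. Regularity gives a Weinstein structure on $B^4\setminus \nu(D)$ built from $0$-, $1$-, and $2$-handles. Turning this upside down relative to $\Lambda$, we expect the following picture, as in the Conway--Etnyre--Tosun description of regular disks \cite{conway2021disks}. The $1$-handles are traded, via standard Weinstein moves, for Legendrian $2$-handles. The disk $D$ is then decomposed as a cobordism from $\Lambda$ to an unknot through Lagrangian saddles attached along Legendrian arcs, together with contact-$(-1)$ or $(+1)$ surgeries in the complement.

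Mirroring \cite{miller2023handle}, we would:
\begin{enumerate}
\item represent the handles as Legendrian bands on $\Lambda$, giving a genus-$g$ surface $F$ whose $g$ "dual" curves form a Legendrian link $L$ with planar complement;
\item check that the Seifert form vanishes on $L$ because of the tb $=-1$ framing and the Lagrangian (hence framing-compatible) condition;
\item show that contact-$(+1)$ surgery on $L$ yields the contact boundary of the Weinstein handlebody obtained from the $1$-handles. That boundary is tight $\#^g(S^1\times S^2)$, so $L$ is a Legendrian $R$-link.
\end{enumerate}

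For the converse, given $F$ and a Legendrian $R$-link derivative $L$, we first use \cref{thm:stable_leg_gen_prop_r}. After adding split max-tb unknots, $L$ is contact-$(+1)$ handleslide equivalent to a max-tb unlink. Because contact $(+1)$ and $(-1)$ surgery cancel, the trace of contact-$(+1)$ surgery on $L$ becomes, upside down, a Weinstein cobordism consisting of $1$-handles (the cancellation of $(+1)$-unknots) attached to tight $\#^g(S^1\times S^2)$. Its cap is the unique Weinstein filling of $S^3$, using uniqueness of fillings. Surgering $F$ along $L$ compresses it to a disk. Following \cite{miller2023handle}, this disk lives in the complement of the surgery trace glued to a $4$-dimensional $1$-handlebody, and it is Lagrangian because the surgery curves are Legendrian with the correct framing. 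Its exterior inherits the Weinstein handle structure, so the disk is regular.

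We expect the main obstacle to be this converse step: verifying that the compressed surface really is an exact Lagrangian disk whose complement is Weinstein, not merely a smooth disk. This requires a local model in which compressing a $\mathrm{tb}=-1$ Legendrian-bounded surface along a Legendrian curve, with contact $(+1)$ surgery, corresponds to a Lagrangian handle. The natural first attempt is the Lagrangian $1$-handle (pinch move) model of \cite{ekholm2016cobordisms} together with the regularity criterion of \cite{conway2021disks}.
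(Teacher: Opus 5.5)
Your proposal has a genuine gap in each direction, and the converse is missing the idea that makes it easy. You compress $F$ along $L$ to get a disk in a $3$-manifold, but nothing in your sketch makes that disk an exact Lagrangian in $B^4$ with Weinstein complement, as you concede. A sign that the route is wrong is that your converse never uses $\mathrm{tb}(\Lambda)=-1$, yet the hypothesis is essential. Stabilizing $\Lambda$ in a small Darboux ball away from $L$ keeps (a local modification of) $F$ and the same Legendrian $R$-link derivative, but gives $\mathrm{tb}=-2$, and such a knot bounds no Lagrangian disk.

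The missing idea is to show that $\Lambda\cup L$ is a Legendrian $R$-link and then invoke \cref{lemma:reg_slice_R_link_component}, rather than build the disk by hand.
In $Y=S^3_L((+1))\cong\#^{g}(S^1\times S^2)$, cap the planar surface $F-\nu(L)$ with convex meridian disks of the surgery tori. This gives a Seifert disk for $\Lambda$ in $Y$ inducing the $F$-framing. So $\mathrm{tb}(\Lambda)=-1$ says exactly that $\Lambda$ is a max-tb unknot in tight $Y$, and hence $S^3_{\Lambda\cup L}((+1))\cong Y\#(S^1\times S^2)$ is tight. The regular disk is then the co-core of the Weinstein $2$-handle attached along a push-off of $\Lambda$ in the filling of $S^3$ built from $\natural^{g+1}(S^1\times B^3)$. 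That filling is standard by uniqueness of Weinstein fillings, and \cref{thm:stable_leg_gen_prop_r} plays no role.

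In the forward direction, your step (1) assumes a regular disk can be presented by Lagrangian saddles along Legendrian arcs, so that bands on $\Lambda$ give $F$ with dual curves having planar complement. No such presentation is known for regular disks; that is essentially the open question of whether regular implies decomposable. There is also no Weinstein version of trading $1$-handles for $2$-handles: on the boundary a Weinstein $1$-handle is contact-$(+1)$ surgery on a max-tb unknot, while a Weinstein $2$-handle is contact-$(-1)$ surgery. Even in the smooth proof you are mirroring, planarity comes from a compression argument \cite[Proposition 3.2]{miller2023handle}, not from a band presentation.

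What regularity actually gives is only that $\Lambda$ and the remaining belt spheres $L$ form a Legendrian $R$-link. Tubing boundary tori of standard neighborhoods of the $L_i$ onto a Seifert surface disjoint from $L$ then gives a Legendrian partial derivative. The real content is \cref{lemma:pd_to_der}, which proceeds as follows:
compress the capped surface in $\#(S^1\times S^2)$ and Legendrian-realize the compressing curve $J$;
read off bypasses from the convex compressing disk that would raise $\mathrm{tb}(J)$ to $-1$;
implement these in $S^3$ by tubing $F$ along the bypass arc, so that $J$ fractures into $\mathrm{tb}=-1$ pieces;
finally, use \cref{lemma:planar_surface_bounds_R_link_new} and the Type-$R$ handleslides along dividing arcs of \cref{lemma:div_curve_h_slide} to split off max-tb unknots until the complement is connected.
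Your steps (2) and (3) assert vanishing of the Seifert form and tightness of the surgery with none of this.

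Your framing normalization is also inconsistent. An $R$-link component has $\mathrm{tb}(L_i)=-1$, while the derivative condition forces $F$-framing $0$, so the contact and surface framings cannot agree; if they did, contact-$(+1)$ surgery would be topological $+1$ surgery. The correct normalization is that $F$ is convex and each $L_i$ meets $\Gamma_F$ exactly twice.
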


\noindent Regarding symplectic sliceness of transverse knots, we obtain: 

\begin{restatable}{theorem}{tranKauffman}\label{thm:tranKauffman}
Let $\mathcal{T} \subset (S^3, \xi_{\mathrm{st}})$ be a transverse knot with $\mathrm{sl}(\mathcal{T}) = -1$. The following are equivalent.
\begin{enumerate}
    \item The knot $\mathcal{T}$ is symplectically slice.\label{part:TK1}
    \item The knot $\mathcal{T}$ admits a Seifert surface with a transverse $R$-link derivative.\label{part:TK2} 
    \item The knot $\mathcal{T}$ admits a Seifert surface with a max-sl unlink derivative.\label{part:TK3}
\end{enumerate}
\end{restatable}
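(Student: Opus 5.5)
We will prove the cycle of implications \eqref{part:TK3} $\Rightarrow$ \eqref{part:TK2} $\Rightarrow$ \eqref{part:TK1} $\Rightarrow$ \eqref{part:TK3}, modeled on the proof of \cref{thm:miller_zupan} but with the smooth handle calculus replaced by transverse surgery and symplectic handle attachment.

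\textbf{Step 1: \eqref{part:TK3} $\Rightarrow$ \eqref{part:TK2}.} This is immediate, since a max-sl transverse unlink is a transverse $R$-link, as recalled in \cref{subsec:tightRlinks}.

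\textbf{Step 2: \eqref{part:TK2} $\Rightarrow$ \eqref{part:TK1}.} Let $F$ be a Seifert surface for $\mathcal{T}$ of genus $g$, with a transverse $R$-link derivative $J = J_1\cup\cdots\cup J_g \subset F$. Perform inadmissible transverse $0$ surgery along $J$. Because the Seifert form vanishes on $\langle J\rangle$, the surface framing of each $J_i$ is $0$. So surgering $F$ along $J$ (removing annuli around $J_i$ and capping with $2g$ disks) turns $F$ into a disk $D$ in the tight $\#^g(S^1\times S^2)$. We then need two facts.
\begin{enumerate}
\item Inadmissible surgery on $J\subset F$ can be arranged so that $\mathcal{T}$ remains transverse and the resulting disk $D$ is compatible with the tight structure, ideally with $\mathcal{T}$ becoming a max-sl transverse unknot in a Darboux ball; the hypothesis $\mathrm{sl}=-1$ is used here.
\item Tight $\#^g(S^1\times S^2)$ has a Stein filling $\natural^g(S^1\times B^3)$, and the inadmissible surgery is reversed by symplectic $2$-handles in the sense of Conway--Min, giving a symplectic cobordism from $(S^3,\xi_{\mathrm{st}})$. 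Gluing yields a strong filling of $S^3$, which by Gromov--Eliashberg is $B^4$ after blowing down.
\end{enumerate}
The transverse unknot bounds a symplectic disk in the $1$-handlebody. Pushed through the cobordism, this gives a symplectic disk in $B^4$ bounded by $\mathcal{T}$. The disk must also avoid the cocores of the $2$-handles, which holds because $J\subset F$ was pushed off.

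\textbf{Step 3: \eqref{part:TK1} $\Rightarrow$ \eqref{part:TK3}.} Let $\Sigma\subset B^4$ be a symplectic disk with $\partial\Sigma = \mathcal{T}$. We plan to isotope $\Sigma$ so that its radial function is Morse with only index $0$ and $1$ critical points, i.e.\ a symplectic ribbon disk. This should be possible using the flexibility of symplectic surfaces, for instance via braided surfaces or Rudolph quasipositivity. A symplectic disk in $B^4$ is then a quasipositive disk, so $\mathcal{T}$ is a quasipositive braid closure with one fewer band than strands, and $\mathrm{sl}=-1$ is consistent with this count. The Seifert surface built from $n$ disks and $n-1$ positive bands, tubed appropriately, carries a derivative given by the cores of the tubes, which form a transverse unlink bounding disjoint disks. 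We then check that the self-linking of each component is $-1$, so this unlink is max-sl.

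The main obstacle is Step 3, specifically producing a Seifert surface in $S^3$ whose derivative is transversely a max-sl unlink, rather than merely a topological unlink. I would first try Rudolph's theorem that braided surfaces realize quasipositive disks, followed by an explicit local computation of the self-linking of the derivative curves.
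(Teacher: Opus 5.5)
Your Step 1 and overall architecture match the paper. However, neither substantive implication is actually carried out, and Step 3 is missing the key idea, at exactly the point you flag. In the standard braid model $\ker(dz+r^2\,d\theta)$, each braid-axis disk $D_j$ has radial characteristic foliation. So the curves you propose (tube cores, i.e.\ small circles in $D_j$ around where a band pierces it) are not even transverse: along a circle in $D_j$ not enclosing the center, $r^2\,d\theta$ changes sign. No self-linking computation repairs this; you need different curves.

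The paper first applies a transverse isotopy to the quasipositive braid, moving each band's crossing near one of its feet. This makes every ribbon intersection in $D_j$ a small arc at its own radius, $r_{1,j}<\cdots<r_{k_j,j}$. It then takes as derivative the round circles centered on the axis at interleaved radii $r^*_{\ell,j}\in(r_{\ell,j},r_{\ell+1,j})$, with $r^*_{k_j,j}\in(r_{k_j,j},R)$. These circles are automatically transverse with $\mathrm{sl}=-1$ and together form a smooth unlink. After cellar-door resolving the singularities away from them, they are a derivative: in the punctured $D_j$ each is an iterated band sum of the usual ribbon-derivative circles, so the two families differ by handleslides in the surface.

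Your preliminary step of isotoping the given symplectic disk into a ``symplectic ribbon disk'' is unjustified and unnecessary. Boileau--Orevkov and Rudolph already make $\mathcal{T}$ transversely isotopic to a quasipositive braid closure, and one works with its Bennequin ribbon surface in $S^3$, which has nothing to do with the original disk.

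In Step 2, item (a) is where all the content lies, and you only assert it. You need that the image $U$ of $\mathcal{T}$ after surgery along $J$ still has $\mathrm{sl}(U)=-1$; here $U$ bounds the disk obtained from $F$ by cutting along $J$ and capping. The paper makes the characteristic foliation of $F$ Morse--Smale, so that $\mathrm{sl}(\mathcal{T})=-(e_+-h_+)+(e_--h_-)$. Cutting along each $J_i$ creates two boundary circles, and these are capped by oppositely oriented meridian disks of the surgery torus, contributing one positive and one negative elliptic point. So the count is unchanged, and $U$ is a max-sl unknot in tight $\#^g(S^1\times S^2)$.

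Granting this, your endgame is a legitimate variant of the paper's. The paper notes that $\mathcal{T}\cup J$ is a transverse $R$-link and takes a co-core as in \cref{lemma:tranRlink_sympl_slice}. You instead take a symplectic disk for the (standard, max-sl) unknot $U$ inside $\natural^g(S^1\times B^3)$ and keep it through Gay's handle attachment along the dual link. The disk misses the $2$-handles simply because $U$ avoids the surgery tori and its interior lies in the $1$-handlebody, not because $J$ was pushed off. No blow-down is needed, since the result is a homotopy ball, hence minimal, hence $B^4$ by Gromov.
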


\noindent By work of Rudolph \cite{rudolph1983seifertribbons} and Boileau and Orevkov \cite{boileau2001quasipositive}, knot types with transverse representatives bounding symplectic surfaces are precisely those that are closures of quasipositive braids. Mark and Tosun \cite[Corollary 2.10]{mark2024fillable} proved that if a smoothly slice Legendrian knot $\Lambda$ with $\mathrm{tb}(\Lambda) = -1$ has a quasipositive transverse pushoff, then $\Lambda$ is Lagrangian slice. This, together with \cref{thm:legKauffman} and \cref{thm:tranKauffman}, implies the statement of \cref{thm:main}.

With \cref{lemma:ribbon_derivative} and the analogy between \eqref{eq:SR} and \eqref{eq:SSR} in mind, we also prove that decomposably slice knots support max-tb unlink derivatives: 

\begin{restatable}{theorem}{decKauffman}\label{thm:decKauffman}
Let $\Lambda \subset (S^3, \xi_{\mathrm{st}})$ be a Legendrian knot. If $\Lambda$ is decomposably Lagrangian slice, then it admits a Seifert surface with a max-tb unlink derivative. 
\end{restatable}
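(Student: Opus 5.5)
We prove \cref{thm:decKauffman} by following the decomposable filling move by move and building the Seifert surface and its derivative at the same time. This parallels the proof of \cref{lemma:ribbon_derivative}.

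\emph{Setup.} A decomposable Lagrangian disk $D$ bounded by $\Lambda$ comes from a sequence of elementary moves, read from the top down: Legendrian isotopies; pinch moves, i.e.\ contact $0$-resolutions at contractible Reeb chords, which are Lagrangian $1$-handles; and finally deletion of max-tb unknot components, which are Lagrangian $0$-handles. Since $\chi(D)=1$, if there are $g$ pinch moves there are $g+1$ unknots at the bottom. Reading upward, $\Lambda$ is obtained from a max-tb Legendrian unlink $U_0\cup\cdots\cup U_g$ by Legendrian isotopy and $g$ Legendrian ribbon moves. Each ribbon move is an oriented band attached along a short Legendrian arc tangent to a Reeb chord. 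Because the disk is connected, each band must join two distinct components.

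\emph{Construction.} Let $\Sigma_i$ be the standard Seifert disk of $U_i$, a small flat disk in the front projection. We choose these disks pairwise disjoint, which we can arrange since the unlink is split after an initial Legendrian isotopy. At each stage we take the surface $F_k$ to be the union of the disks together with the bands attached so far, plus \emph{tubes}. Whenever a later band $b$ passes through some disk $\Sigma_i$, we tube around that ribbon intersection, exactly as in the smooth argument. The result is an embedded orientable surface $F$ with $\partial F=\Lambda$, of genus equal to the number of ribbon intersections.

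\emph{The derivative.} In the smooth proof, the derivative consists of the curves $\partial\Sigma_i$, pushed slightly into $F$ and excluding one of them; their number equals the genus after tubing is accounted for. These form an unlink with vanishing Seifert form, since they bound disjoint disks $\Sigma_i$ in the complement of $F$ after a pushoff. Here I claim the curves can be taken to be Legendrian isotopic to the original unknots $U_i$. We keep each $U_i$ Legendrian, slide $\Lambda$ slightly off it, and build $F$ so that $U_i\subset F$ with the Seifert framing from $F$ matching the contact framing minus one. We then check the following:
\begin{enumerate}
\item the resulting sublink is still a max-tb unlink;
\item $F$ minus the sublink is connected and planar, which is the combinatorial count from the smooth case;
\item the Seifert form vanishes on the sublink. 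This holds because it is an unlink bounding disjoint disks disjoint from its pushoffs, giving zero linking and zero framing.
\end{enumerate}

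\emph{Main obstacle.} The delicate step is making the surface $F$ compatible with the Legendrian structure. We need the derivative components to be Legendrian unknots with $\mathrm{tb}=-1$ embedded in $F$ as a Legendrian unlink. Tubing and band attachments that occur in the front projection must not force extra twisting in the surface framing along $U_i$. What I would try first is to use the Legendrian satellite and Legendrian ribbon-band models, in which a pinch move is locally a Reeb-chord band lying in a front-standard annulus, and then verify $\mathrm{tb}(U_i)=-1$ directly from the front diagram of the final link.
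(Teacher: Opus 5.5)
There is a genuine gap: the link you propose is not a derivative. A derivative of a genus-$r$ Seifert surface $F$ must have exactly $r$ components. Cutting along $k$ disjoint circles preserves $\chi(F)=1-2r$, while a connected planar surface with $1+2k$ boundary components has Euler characteristic $1-2k$.

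Your $F$ has genus $r$ equal to the number of ribbon intersections. But your curves, the pushed-in copies of $\partial\Sigma_i$ with one omitted, number $g$, the number of bands, and $r\neq g$ in general. Concretely:
\begin{itemize}
\item If no band passes through $\Sigma_i$, the pushed-in copy of $\partial\Sigma_i$ bounds a subdisk of $F$, so cutting along it disconnects $F$. Already for two max-tb unknots joined by one band with no ribbon intersections, $F$ is a disk and the derivative must be empty, yet your recipe outputs a curve.
\item If $\Sigma_i$ is pierced $k\geq 2$ times, the pushed-in curve cobounds a planar subsurface of $F$ with the $k$ small circles around the piercings. It is therefore homologous to their signed sum, and a single curve cannot account for $k$ units of genus.
\end{itemize}
So item (2) of your checklist fails. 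The smooth proof you are recalling does not use these curves.

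The derivative in \cref{lemma:ribbon_derivative}, and in the paper's proof, has one component per ribbon singularity: the boundary of a small disk neighborhood of the ribbon arc in the \emph{pierced} sheet. With this choice your ``main obstacle'' disappears, because the curves are local.

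The paper proceeds inductively through the decomposable moves; Legendrian isotopies carry everything along as ambient contact isotopies. At each ambient Legendrian surgery, it makes the interior of the Legendrian surgery arc transverse to the current ribbon disk. At each crossing point it chooses a small max-tb Legendrian unknot that is a meridian of the arc, and perturbs the pierced sheet locally to contain it. Each such curve has $\mathrm{tb}=-1$ and surface framing $0$. Distinct meridians lie in disjoint small Darboux balls, so together they form a max-tb unlink with vanishing Seifert form. The cellar-door resolutions are then performed away from them.

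This also covers surgery arcs that pass through earlier bands rather than only through the original disks $\Sigma_i$, a case your description omits. It replaces the global framing check along $U_i$ that you left unresolved.
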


\noindent For subtle reasons that will be discussed later in the article (see \cref{remark:lSGPRC} and \cref{sec:bandsum}), the converse direction (a max-tb unlink derivative implies decomposable sliceness) is more difficult and we content ourselves to leave it as a conjecture. 

\begin{conjecture}\label{conj:decomposable}
Let $\Lambda \subset (S^3, \xi_{\mathrm{st}})$ be a Legendrian knot with $\mathrm{tb}(\Lambda) = -1$. If $\Lambda$ admits a Seifert surface with a max-tb unlink derivative, then $\Lambda$ is decomposably Lagrangian slice.
\end{conjecture}

\subsection{Organization} In \cref{sec:background} we collect background information on Legendrian and transverse knots, convex surface theory, exact Lagrangian cobordisms, contact surgery, and transverse surgery. In \cref{sec:leg_r_link} we introduce Legendrian $R$-links and prove \cref{thm:stable_leg_gen_prop_r}. Using a local model established in \cref{sec:bandsum}, we prove the derivative results \cref{thm:legKauffman} and \cref{thm:decKauffman} in \cref{sec:reg_char}. Finally, in \cref{sec:transverse} we repeat the story in the transverse setting, proving \cref{thm:tranKauffman} (hence \cref{thm:main}).

\subsection{Acknowledgments} The authors would like to thank John Etnyre, James Hughes, and Josh Sabloff for interest in our work, Kyle Hayden for a useful correspondence, and Austin Christian and B\"ulent Tosun for several discussions and for offering feedback on an early draft. 

\subsection{Statement on AI use} Large language models were not used in the preparation of this article: the ideas and text were conceived and written by the authors.

\section{Background}\label{sec:background}

\subsection{Legendrian and transverse knots}

Let $\xi_{\mathrm{st}}$ denote the unique tight (positive) contact structure on $S^3$, which is the (maximally non-integrable) plane field of complex tangencies to $S^3\subset \C^2$. There are two natural classes of knots in $(S^3, \xi_{\mathrm{st}})$: \emph{Legendrian} knots and \emph{transverse} knots. The former are everywhere tangent to $\xi_{\mathrm{st}}$, while the latter are everywhere (positively) transverse. 

Beyond their topological type, Legendrian knots have two additional classical invariants. The first is the \emph{Thurston-Bennequin number}, denoted $\mathrm{tb}(\cdot)$, which is the integral framing induced by the contact structure. The second is the \emph{rotation number}, denoted $\mathrm{rot}(\cdot)$, which is the relative Euler class of the contact structure on a Seifert surface, changing sign according to a choice of orientation of the knot. Transverse knots only have one classical invariant. The \emph{self-linking number} of a transverse knot $\mathcal{T}\subset (S^3, \xi_{\mathrm{st}})$ defined as follows. Choose a Seifert surface $\Sigma$ and a nonzero section $s: \Sigma \to \xi\mid_{\Sigma}$, which is possible as the vector bundle $\xi\mid_{\Sigma}$ is trivial. Then $\mathrm{sl}(\mathcal{T}):= \mathrm{lk}(\mathcal{T}, s(\mathcal{T}))$. Most importantly for us, if $\mathcal{T}$ is the positive transverse pushoff of a Legendrian knot $\Lambda$, then $\mathrm{sl}(\mathcal{T}) = \mathrm{tb}(\Lambda) - \mathrm{rot}(\Lambda)$.

Given an oriented Legendrian knot $\Lambda \subset (S^3,\xi_{\mathrm{st}})$, we denote by $S_{\pm}(\Lambda)$ the \emph{positive} (resp.\ \emph{negative}) stabilization as in \cref{fig:stabs}. Note that $\mathrm{tb}(S_{\pm}(\Lambda)) = \mathrm{tb}(\Lambda) - 1$ and $\mathrm{rot}(S_{\pm}(\Lambda)) = \mathrm{rot}(\Lambda) \pm 1$. In particular, if $\mathcal{T}, \mathcal{T}_-$ denote the positive transverse pushoffs of $\Lambda, S_-(\Lambda)$, respectively, then $\mathrm{sl}(\mathcal{T}) = \mathrm{sl}(\mathcal{T}_-)$. In fact, more is true, as $\mathcal{T}$ and $\mathcal{T}'$ are always transversely isotopic.

\begin{figure}[ht]
	\centering
	\begin{overpic}[scale=.26]{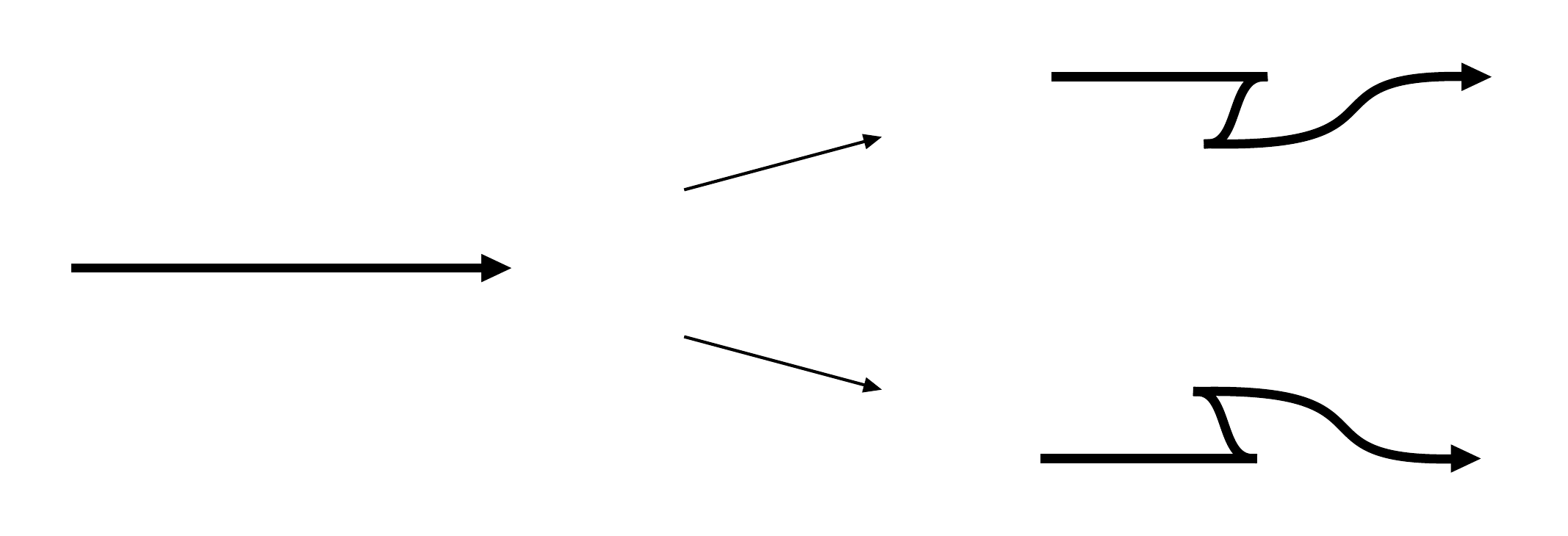}
       \put(48,26){\small $S_+$}
       \put(48,7){\small $S_-$}
	\end{overpic}
	\caption{Positive and negative Legendrian stabilization in the front projection.}
	\label{fig:stabs}
\end{figure}

\subsection{Convex surface theory} Here we collect the relevant background on convexity, deferring to the literature references such as \cite{giroux1991convexite,honda2000classification} for more details, along with the book of Geiges \cite{geiges2008introduction} and the lecture notes of Etnyre \cite{etnyre2004convexlectures} and Massot \cite{massot2013topologicalmethods3dimensionalcontact}. 

Let $(Y, \xi)$ be a co-oriented contact $3$-manifold. Recall that a \emph{contact vector field} $X$ on $Y$ is one satisfying $\mathcal{L}_X\xi = \xi$, i.e.\ one whose flow preserves the contact distribution. 

\begin{definition}
An oriented surface $\Sigma\subset (Y, \xi)$, compact and possibly with Legendrian boundary, is \textit{convex} if there is a contact vector field $X$ everywhere transverse to $\Sigma$.    
\end{definition}

\begin{definition}
Let $\Sigma$ be a convex surface. Given a contact vector field $X$ witnessing convexity, the \emph{dividing set} is $\Gamma_X := \set{p\in \Sigma}{X_p \in \xi_{p}}$. The \emph{positive} (resp.\ \emph{negative}) region, denoted $R_{\pm}(\Sigma)$, is the set of points $p\in \Sigma$ where $X_p$ is positively (resp.\ negatively) transverse to $\xi_p$.   
\end{definition}

Informally, the dividing set indicates where the contact structure is "vertically" transverse to the surface, where verticality is measured by $X$. It turns out that $\Gamma_X$ is always a smoothly and properly embedded multicurve, and its isotopy class in $\Sigma$ is independent of the choice of $X$. For this reason, we often refer to the dividing set as $\Gamma$ without reference to a specific contact vector field. 

The dividing set encodes the contact structure in a neighborhood of a convex surface. Specifically, if $\xi$ and $\xi'$ are two contact structures defined near $\Sigma$ inducing the same dividing set $\Gamma_X$ with respect to a fixed contact vector field $X$, then $\xi$ and $\xi'$ are contact isotopic near $\Sigma$ relative to $\Gamma_X$. We refer to this principle as Giroux flexibility \cite[Theorem 3.4]{honda2000classification}. Consequently, in any neighborhood of a convex surface there is an infinitely-large vertically-invariant neighborhood determined up to contact isotopy by the isotopy class of the dividing curves, obtained by integrating the transverse contact vector field:

\begin{lemma}[\cite{giroux1991convexite}]\label{lemma:nbd_size}
Let $\Sigma \subset (Y, \xi)$ be a convex surface. Let $\mathcal{U} \subset Y$ be any open neighborhood of $\Sigma$. There is a contact embedding $(\Sigma \times \R_t, \ker \alpha_{\mathrm{inv}})\hookrightarrow (\mathcal{U}, \xi)$ mapping $\Sigma\times \{0\}$ to $\Sigma \subset \mathcal{U}$, such that 
\begin{enumerate}
    \item $\partial_t$ is a strict contact vector field, so that $\Sigma\times \{0\} \subset \Sigma \times \R$ is convex and $\alpha_{\mathrm{inv}}$ is $\R$-invariant, and 
    \item the dividing set of $\Sigma \times \{0\}$ is mapped to the dividing set of $\Sigma\subset \mathcal{U}$. 
\end{enumerate}
Specifically, we may take $\alpha_{\mathrm{inv}}$ to be of the form $\alpha_{\mathrm{inv}} = f\, dt + \beta$ where $f, \beta$ are $t$-invariant, the dividing set is $\Gamma =\{f=0\}$, and $R_{\pm}(\Sigma) = \{\pm f > 0\}$.
\end{lemma}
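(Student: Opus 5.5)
The statement to prove is \cref{lemma:nbd_size}: the existence of an $\R$-invariant contact model $(\Sigma\times\R_t,\ker\alpha_{\mathrm{inv}})$ embedded in an arbitrary neighborhood $\mathcal{U}$ of a convex surface. The plan has three steps: build a vertically invariant model near $\Sigma$ from a transverse contact vector field, show this model is invariant under all of $\R$, and then rescale the $t$-direction so the infinite product fits inside $\mathcal{U}$.

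\emph{Step 1: a local invariant form.} Let $X$ be a contact vector field transverse to $\Sigma$ and let $\phi_s$ be its flow. Because $\Sigma$ is compact, with Legendrian boundary handled in the standard way, there is $\varepsilon>0$ such that
\[
\Phi\colon \Sigma\times(-\varepsilon,\varepsilon)\to Y,\qquad (p,s)\mapsto\phi_s(p),
\]
is an embedding onto an open set inside $\mathcal{U}$. Pulling back, $\Phi^*X=\partial_s$. Since $\mathcal{L}_X\xi=\xi$, the contact structure $\Phi^*\xi$ is $s$-invariant as a plane field. Choose a contact form $\alpha$ for it. Then $\mathcal{L}_{\partial_s}\alpha=g\,\alpha$ for some function $g$. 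Setting $\tilde\alpha=\alpha/\alpha(\partial_s)$ wherever $\alpha(\partial_s)\neq 0$ does not work globally, so I use the standard alternative: on an invariant plane field, normalize by restricting to the slice $s=0$. Write $\alpha_0=\alpha|_{s=0}$ and decompose it along $\Sigma$ as $f\,ds+\beta$, where $f=\alpha(\partial_s)|_{\Sigma}$ and $\beta=\alpha|_{T\Sigma}$. Then define $\alpha_{\mathrm{inv}}=f\,ds+\beta$ with $f,\beta$ pulled back along the projection to $\Sigma$.

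\emph{Step 2: $\alpha_{\mathrm{inv}}$ defines $\Phi^*\xi$ and is contact on all of $\Sigma\times\R$.} The kernel of $\alpha_{\mathrm{inv}}$ agrees with $\Phi^*\xi$ along $s=0$. Both plane fields are $\partial_s$-invariant, since the flow of $\partial_s$ is translation. Hence they agree on all of $\Sigma\times(-\varepsilon,\varepsilon)$. The contact condition $\alpha_{\mathrm{inv}}\wedge d\alpha_{\mathrm{inv}}>0$ involves only $s$-independent data, namely
\[
f\,d\beta+\beta\wedge df>0 \quad\text{on }\Sigma .
\]
It therefore holds at $s=0$, where it is inherited from $\xi$, and so it holds for every $s\in\R$. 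This shows $(\Sigma\times\R,\ker\alpha_{\mathrm{inv}})$ is a contact manifold with $\partial_s$ a strict contact vector field, giving item (1).

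For item (2), at a point of $\Sigma$ we have $X\in\xi$ if and only if $\alpha(X)=0$, that is, $f=0$. This identifies $\Gamma=\{f=0\}$. Likewise $R_\pm=\{\pm f>0\}$, by the sign of $\alpha(X)$ relative to the co-orientation.

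\emph{Step 3: fitting the infinite model inside $\mathcal{U}$.} Choose a diffeomorphism $h\colon\R\to(-\varepsilon,\varepsilon)$. The map $(p,t)\mapsto\Phi(p,h(t))$ is an embedding of $\Sigma\times\R$ into $\mathcal{U}$, but it is not contact for $\ker\alpha_{\mathrm{inv}}$ in $t$, since it pulls back to $f\,h'(t)\,dt+\beta$. I expect this to be the main obstacle. The remedy is to note that
\[
\ker\bigl(f\,h'(t)\,dt+\beta\bigr)\quad\text{and}\quad\ker\bigl(f\,dt+\beta\bigr)
\]
are both contact structures on $\Sigma\times\R$ for which $\Sigma\times\{t\}$ is convex with the same dividing set, via the contact vector field $\partial_t/h'(t)$. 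I would then apply Giroux flexibility, in a parametric form along the $t$-direction, to isotope one to the other. Alternatively, and more cleanly, I would avoid reparametrization altogether and prove that the flow of $X$ exists for all time in the model, then shrink using a contactomorphism of $\Sigma\times\R$ that is supported away from $\Sigma\times\{0\}$ and compresses it into $\Sigma\times(-\varepsilon,\varepsilon)$. Such a map can be built by a Moser argument applied to the path of contact forms $f\,h_\tau'(t)\,dt+\beta$, with $h_\tau$ interpolating between the identity and $h$. Checking that the contact condition holds along this path, which requires $h_\tau'>0$, is the key verification. That condition is automatic here, since the form $f\,c\,dt+\beta$ with $c>0$ satisfies $c\,(f\,d\beta+\beta\wedge df)>0$.
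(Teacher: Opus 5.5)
The paper quotes this lemma from Giroux and gives no proof, so I am judging your argument on its own merits.

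\textbf{What works.} Steps 1 and 2 are correct. They give the finite invariant neighborhood $\Phi\colon(\Sigma\times(-\ve,\ve),\ker\alpha_{\mathrm{inv}})\hookrightarrow\mathcal{U}$, the abstract $\R$-invariant extension, and the identifications $\Gamma=\{f=0\}$ and $R_\pm=\{\pm f>0\}$.

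\textbf{The gap.} It is in Step 3, which is exactly where the lemma says more than the usual $\Sigma\times(-\ve,\ve)$ statement. Neither of your remedies is actually carried out.
\begin{itemize}
\item Giroux flexibility is a statement about a neighborhood of a single convex surface. There is no standard ``parametric'' version that produces a global isotopy of the infinite product, and trying to globalize it runs into the same problem as the Moser route.
\item In the Moser route you misidentify the key check. Positivity of $c(t)\,(f\,d\beta+\beta\wedge df)$ is automatic, as you yourself observe. What must be verified is that the Gray vector field on the \emph{non-compact} manifold $\Sigma\times\R$ integrates up to time $1$. The path $f\,h_\tau'(t)\,dt+\beta$ changes on all of both ends, with $h'\to0$ there, so there is no compact support to appeal to. Without some such control, Gray's argument does not apply on an open manifold, and your proposal supplies none.
\end{itemize}
A smaller point: what you need is a contactomorphism $(\Sigma\times\R,\ker(f\,dt+\beta))\to(\Sigma\times\R,\ker(fh'\,dt+\beta))$, to be composed with $\mathrm{id}\times h$. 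A self-contactomorphism of $\Sigma\times\R$ cannot ``compress'' it.

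\textbf{How to close it.} The missing point is short, and with it your Moser route works.
\begin{itemize}
\item Let $W$ be the section of $\xi$ determined by $\iota_W d\alpha_{\mathrm{inv}}|_\xi=-dt|_\xi$. It is $t$-invariant and satisfies $dt(W)=-d\alpha_{\mathrm{inv}}(W,W)=0$. So $fW$ is a vector field on $\Sigma$ tangent to the characteristic foliation, hence tangent to a Legendrian boundary.
\item Rewrite $\alpha_\tau=c_\tau(t)f\,dt+\beta$ in the coordinate $u=\int_0^t c_\tau$. Then $\alpha_\tau=f\,du+\beta$ and $\dot\alpha_\tau=(\dot c_\tau/c_\tau)f\,du$, so the Gray field is $V_\tau=(\dot c_\tau/c_\tau)\,fW$.
\item This field is tangent to the compact slices, hence complete. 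It integrates explicitly to $\psi(p,t)=(\phi^{fW}_{\log h'(t)}(p),t)$, giving the desired embedding $(p,t)\mapsto\Phi\bigl(\phi^{fW}_{\log h'(t)}(p),h(t)\bigr)$.
\end{itemize}

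Equivalently, in the spirit of the paper's remark that the neighborhood is ``obtained by integrating the transverse contact vector field'', you can replace $X$ by a different contact vector field.
\begin{itemize}
\item Take the contact vector field with Hamiltonian $\rho(t)f$. A direct computation shows it equals $\rho\,\partial_t+\rho'\,fW$.
\item Choose $\rho>0$ on $(-\ve,\ve)$ with $\rho(0)=1$ and $\int_0^{\pm\ve}dt/\rho=\pm\infty$.
\item The flow is then complete and maps slices to slices. Its flow-out of $\Sigma\times\{0\}$ carries $\Sigma\times\R$ diffeomorphically onto $\Sigma\times(-\ve,\ve)$.
\item Your Step 2 argument then identifies the pulled-back structure with $\ker(f\,ds+\beta)$.
\end{itemize}
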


Convex surface theory interacts well with Legendrian knots. For instance, Seifert surfaces with Legendrian boundary can be made convex, provided $\mathrm{tb}$ is non-positive.

\begin{theorem}[\cite{giroux1991convexite,honda2000classification}]\label{thm:legendrian_bdry_convex}
Let $\Sigma\subset (Y, \xi)$ be an orientable surface with Legendrian boundary $\partial \Sigma = \Lambda_1 \cup \cdots \cup \Lambda_n$. Assume that $\mathrm{tb}(\Lambda_i)\leq f_i$, where $f_i$ is the smooth framing induced by $\Sigma$. Then there is a perturbation of $\Sigma$ relative to $\partial \Sigma$, $C^0$-small near $\partial \Sigma$ and $C^{\infty}$-small in the interior, such that the resulting surface is convex. Moreover, $\mathrm{tb}(\Lambda_i) = f_i -\frac{1}{2}|\Gamma \cap \Lambda_i|$ and $\mathrm{rot}_{[\Sigma]}(\partial \Sigma) = \chi(R_+(\Sigma)) - \chi(R_-(\Sigma))$. 
\end{theorem}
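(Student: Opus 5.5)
This is the classical Giroux–Kanda–Honda result, and I would prove it in three steps: arrange the Legendrian boundary into a standard model, make the surface convex, then read off the two formulas.

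\textbf{Step 1: standard boundary model.} Near each component $\Lambda_i$, I would use the standard neighborhood $S^1\times D^2$ with contact form $\cos\theta\, dx - \sin\theta\, dy$. In these coordinates the contact framing is the product framing, and the surface $\Sigma$ meets the neighborhood in an annulus whose slope records the twisting $f_i - \mathrm{tb}(\Lambda_i)\ge 0$ of $\Sigma$ relative to $\xi$ along $\Lambda_i$.

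By a $C^0$-small isotopy rel $\Lambda_i$, I would make this annulus a ruled surface that rotates monotonically against the contact planes. Using $f_i-\mathrm{tb}(\Lambda_i)\ge 0$, the contact planes then twist relative to $\Sigma$ exactly $f_i - \mathrm{tb}(\Lambda_i)$ times, and always in the same direction. This produces $2(f_i-\mathrm{tb}(\Lambda_i))$ points of $\Lambda_i$ where $\xi$ is tangent to $\Sigma$. Along $\Lambda_i$ these points alternate as half-elliptic and half-hyperbolic singularities of the characteristic foliation.

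This step is where the hypothesis enters. If $\mathrm{tb}>f_i$, the planes would twist the "wrong" way and no such standard form would exist.

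\textbf{Step 2: convexity.} The characteristic foliation near $\partial\Sigma$ is now of Morse–Smale type, and a contact vector field transverse to $\Sigma$ can be written down explicitly in the model annulus. On the interior I would apply Giroux's genericity theorem: a $C^\infty$-small perturbation rel a neighborhood of the boundary makes the characteristic foliation Morse–Smale, and hence makes $\Sigma$ convex.

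The main obstacle is the gluing. Giroux's argument builds the transverse vector field from a Morse–Smale foliation via the characterization "a surface is convex iff its characteristic foliation is divided by some multicurve." So I would check that the boundary model's foliation is divided by arcs ending at points between consecutive boundary singularities. Applying the characterization to the whole surface at once then avoids patching vector fields.

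\textbf{Step 3: formulas.}
\begin{enumerate}
\item \emph{Thurston–Bennequin.} Each dividing arc endpoint on $\Lambda_i$ sits between consecutive tangencies, so $|\Gamma\cap\Lambda_i| = 2(f_i-\mathrm{tb}(\Lambda_i))$. This is the identity $\mathrm{tb}(\Lambda_i)=f_i-\tfrac12|\Gamma\cap\Lambda_i|$.
\item \emph{Rotation number.} I would compute the relative Euler class of $\xi|_\Sigma$ with respect to the tangent section along $\partial\Sigma$. Choose a section of $\xi|_\Sigma$ given by the projection of the contact vector field $X$ (equivalently, a vector field directing the characteristic foliation, transverse to $\Gamma$ and pointing out of $R_+$). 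The signed count of its zeros is $\chi(R_+)-\chi(R_-)$, as in the closed-surface Euler class computation via Poincaré–Hopf on $R_\pm$. The boundary half-singularities cancel in pairs when compared with the tangent section of $\Lambda$, which leaves $\mathrm{rot}=\chi(R_+)-\chi(R_-)$.
\end{enumerate}
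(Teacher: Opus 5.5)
\textbf{Overall.} The paper does not prove this statement; it quotes it from Giroux and Honda (the Legendrian-boundary version is due to Kanda). Your outline follows the standard route there: a standard collar along each $\Lambda_i$, then genericity plus the ``divided'' criterion, then counting. Steps 1--2 and the $\mathrm{tb}$ count are acceptable as an outline, with two small corrections.
\begin{itemize}
\item In the model $\cos\theta\,dx-\sin\theta\,dy$ the contact planes along the core make a full turn relative to the product framing. The two framings agree in the $J^1S^1$ model $dz-y\,d\theta$, not in this one.
\item A monotone ruled collar exists for either sign of $f_i-\mathrm{tb}(\Lambda_i)$, so the hypothesis is not needed to build it. What the hypothesis gives is that the flow along $\Lambda_i$ runs from positive to negative tangencies. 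That is exactly what your Step 2 check needs, namely that a dividing arc can separate consecutive tangencies.
\end{itemize}

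\textbf{The gap: the rotation number.} Your collar has boundary tangencies alternating between half-elliptic and half-hyperbolic. For that collar, both of your claims fail. The boundary half-singularities do not cancel in pairs. And Poincar\'e--Hopf on $R_\pm$ is not ``as in the closed case'', because the directing field $v$ is tangent to the arcs of $\Lambda$ in $\partial R_\pm$ and has zeros on them.

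Here is the local computation. Perturb $v$ near a boundary zero $p$ so that it is nonvanishing along $\Lambda$. This introduces a half-turn relative to $\dot\Lambda$ and possibly an interior zero. The net effect is that $p$ contributes $\tfrac12\,\mathrm{sign}(p)\,\mathrm{ind}(p)$ to $\mathrm{rot}$, where $\mathrm{ind}(p)$ is the index of its double ($+1$ for half-elliptic, $-1$ for half-hyperbolic). So a positive half-elliptic point and a negative half-hyperbolic point each contribute $+\tfrac12$; they add rather than cancel.

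A concrete check: take the Seifert disk of the $\mathrm{tb}=-1$ unknot with one boundary singularity of each of these two types. The interior zeros then have signed count $-1$, while $\mathrm{rot}=0=\chi(R_+)-\chi(R_-)$. So your final formula is right only because the two errors cancel.

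\textbf{The correct bookkeeping.} Double $R_\pm$ along its $\Lambda$-arcs. Poincar\'e--Hopf on the double gives $\chi(R_\pm)=I_\pm+\tfrac12\sum_{p\in\Lambda\cap R_\pm}\mathrm{ind}(p)+\tfrac12a_\pm$. Here $I_\pm$ is the sum of interior indices in $R_\pm$ and $a_\pm$ is the number of $\Lambda$-arcs in $\partial R_\pm$. The $\Gamma$-endpoints cut each $\Lambda_i$ into arcs lying alternately in $R_+$ and $R_-$, so $a_+=a_-$. Hence $\chi(R_+)-\chi(R_-)=I_+-I_-+\tfrac12\sum_p\mathrm{sign}(p)\,\mathrm{ind}(p)$, which is exactly $\mathrm{rot}$ by the local computation above.

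Alternatively, you can choose a collar in which all boundary singularities have the same type. Then your two claims hold literally, but you still need the doubling argument to justify the Poincar\'e--Hopf step on $R_\pm$.
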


Two convex surfaces meeting along a Legendrian corner, convex with respect to different vector fields, admit a standard neighborhood along which the edge can be rounded \cite[Lemma 3.11]{honda2000classification}. Given interlacing dividing curves, the edge rounding principle is "up and to the left;" see \cref{fig:edge}. 

\begin{figure}[ht]
	\centering
	\begin{overpic}[scale=.37]{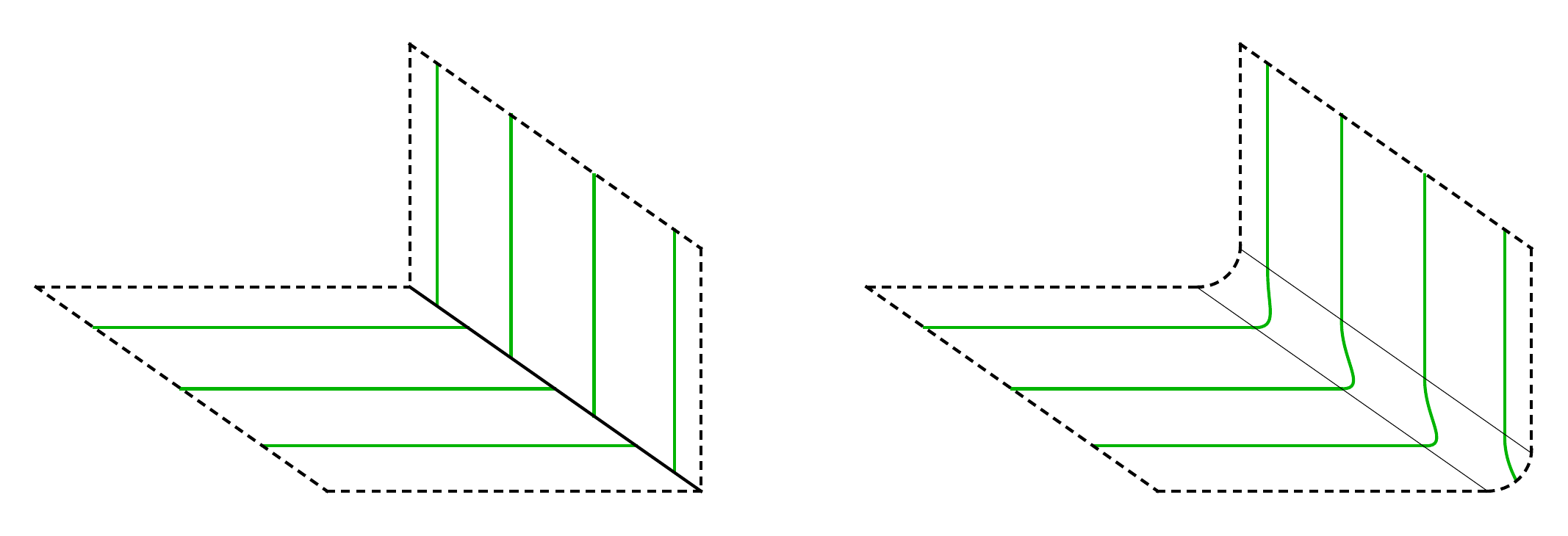}
       
	\end{overpic}
	\caption{Edge-rounding two convex surfaces along a Legendrian corner.}
	\label{fig:edge}
\end{figure}

Convexity also provides a framework to realize embedded curves and arcs on surfaces as Legendrian. If $\Sigma$ is convex with dividing set $\Gamma$, we say that a properly embedded graph $C\subset \Sigma$ is \textit{non-isolating with respect to $\Gamma$} if each component of $\Sigma - C$ has nonempty intersection with $\Gamma$. Informally, non-isolating curves on convex surfaces may be assumed to be Legendrian.

\begin{theorem}[Legendrian realization principle \cite{kanda1998legendrian,honda2000classification}]\label{thm:LeRP}
Let $\Sigma \subset (Y, \xi)$ be a convex surface with dividing set $\Gamma$. Let $C\subset \Sigma$ be a properly embedded graph transverse to and non-isolating with respect to $\Gamma$. There is an isotopy from the identity $\phi_s:\Sigma \to \mathcal{U}\subset Y$, $s\in [0,1]$, supported in the invariant neighborhood of $\Sigma$, such that
\begin{enumerate}
    \item $\phi_s(\Sigma)$ is graphical (in particular convex) in the invariant neighborhood, and 
    \item $\phi_1(C)$ is Legendrian.
\end{enumerate}
Moreover, wherever $C$ is already Legendrian, the isotopy may be taken to be constant. 
\end{theorem}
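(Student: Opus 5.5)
The plan is to reduce the statement to Giroux flexibility, working entirely inside the invariant neighborhood $\Sigma\times\R_t$ with $\alpha_{\mathrm{inv}} = f\,dt+\beta$ from \cref{lemma:nbd_size}. The key fact I would use is the strong form of Giroux's flexibility theorem. Suppose $\mathcal{F}$ is any singular foliation on $\Sigma$ that is \emph{divided} by $\Gamma$, meaning:
\begin{enumerate}
    \item $\mathcal{F}$ is directed by a vector field that expands an area form on $R_+$, contracts it on $R_-$, and points transversely out of $R_+$ along $\Gamma$;
    \item $\mathcal{F}$ agrees with the characteristic foliation $\Sigma_\xi$ wherever $C$ is already Legendrian.
\end{enumerate}
Then there is an isotopy $\phi_s$ through graphs $\{t = g_s(x)\}$ in $\Sigma\times\R$ with $(\phi_1(\Sigma))_\xi = \phi_1(\mathcal{F})$. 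Each graph is transverse to $\partial_t$, which is a contact vector field, so every $\phi_s(\Sigma)$ is convex. I would derive this by interpolating linearly between the contact forms $\beta + f\,dt$ and the form determined by $\mathcal{F}$ and $\Gamma$, checking that the interpolation stays contact because both forms are divided by the same $\Gamma$, and then applying a Moser argument with $\partial_t$-invariance. This produces the graphical isotopy, which is supported near the invariant neighborhood and constant wherever the two foliations already agree. Conclusion (2) then follows: once $C$ is a union of leaves and singular points of $\mathcal{F}$, $\phi_1(C)$ is tangent to $\xi$, hence Legendrian.

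So the real work is a combinatorial construction of a foliation $\mathcal{F}$ divided by $\Gamma$ that contains $C$ as leaves. First isotope $C$ rel its Legendrian parts so that:
\begin{enumerate}
    \item $C$ is transverse to $\Gamma$;
    \item every vertex of $C$ lies off $\Gamma$.
\end{enumerate}
Declare each vertex a singular point: an elliptic source if it lies in $R_+$, an elliptic sink if it lies in $R_-$. Declare each edge of $C$ to be a leaf, or a chain of leaves. Along an edge, insert alternating elliptic and hyperbolic singularities so that every passage from $R_+$ to $R_-$ crosses $\Gamma$ in the correct direction, namely flowing from the $R_+$ side to the $R_-$ side. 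Near $C$ this defines $\mathcal{F}$ on a regular neighborhood $N(C)$, and the local model near each crossing with $\Gamma$ is standard.

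The step I expect to be the main obstacle is extending $\mathcal{F}$ from $N(C)$ over the complementary components $W$ of $\Sigma - N(C)$ so that it is still divided by $\Gamma\cap W$; this is exactly where non-isolating is used. Along $\partial W$ the foliation is prescribed. I would build the extension in three steps:
\begin{enumerate}
    \item Since $W\cap\Gamma\neq\emptyset$, each component of $W\cap R_+$ and of $W\cap R_-$ is adjacent to some arc of $\Gamma$.
    \item Put a Morse-type expanding vector field on $W\cap R_+$ that flows outward toward $\Gamma$, and symmetrically a contracting one on $W\cap R_-$ that flows in from $\Gamma$.
    \item Glue these along $\Gamma$ and along $\partial W$, adding hyperbolic singularities as needed to match the prescribed boundary behavior.
\end{enumerate}
The expansion and contraction conditions can be met because the flux of the boundary data can always escape across $\Gamma$. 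If some component of $W$ missed $\Gamma$, it would lie entirely in $R_+$ (or $R_-$), and there an expanding field would be forced to exit through $C$, contradicting $C$ being a leaf. That is exactly the isolating obstruction, so the hypothesis is what makes this step possible.

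Finally, I would check that the isotopy can be taken constant where $C$ is already Legendrian, by choosing $\mathcal{F}=\Sigma_\xi$ near those parts. Combining the construction of $\mathcal{F}$ with flexibility then yields the isotopy $\phi_s$ with properties (1) and (2).
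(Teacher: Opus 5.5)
The paper gives no proof of this statement. It is imported as background from Kanda and Honda, so there is no internal argument to compare against. Your outline is essentially the standard proof in Honda's paper, and the approach is sound:
\begin{itemize}
\item Giroux flexibility reduces the problem to building a singular foliation on $\Sigma$, divided by $\Gamma$, in which $C$ is a union of leaves and singular points.
\item Non-isolation is exactly what guarantees that every component of $(\Sigma - C)\cap R_+$ has an exit across $\Gamma$, and every component of $(\Sigma - C)\cap R_-$ has an entrance.
\item Consequently no interior sink is forced in $R_+$ and no interior source in $R_-$.
\end{itemize}

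Three points should be tightened.

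First, in your sketch of flexibility, the stated reason that linear interpolation stays contact ("both forms are divided by the same $\Gamma$") is not sufficient. The condition $f\,d\beta+\beta\wedge df>0$ is linear in $\beta$ only for a \emph{fixed} $f$. Two forms $f\,dt+\beta$ and $u\,dt+\beta'$ with $\{f=0\}=\{u=0\}=\Gamma$ can have negative cross terms along the segment between them. The standard fix is to keep $f$ and replace $\beta'$ by $e^{K\varphi}\beta'$, where $K\gg 0$ and $\varphi$ is a Lyapunov-type function: increasing along the flow in $R_+$ and decreasing in $R_-$, for instance $-f^2$ near $\Gamma$. Alternatively, simply cite flexibility.

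Second, arcs of $C$ may end on a Legendrian $\partial\Sigma$; this is the case used in \cref{lemma:div_curve_h_slide}, where $\ell_{\pm 1}$ run from $L_-$ to $L_+$. There the foliation along $\partial\Sigma$ is fixed, and a Legendrian arc can only meet $\partial\Sigma$ at a singular point. So each endpoint must first be slid, within its arc of $\partial\Sigma-\Gamma$, to the boundary singular point there, which has the sign matching your source/sink convention. Your vertex recipe does not cover this case.

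Third, make the foliation on $\partial N(C)$ explicit: leaves exit $N(C)$ into $W$ along $R_+$ and enter $N(C)$ from $W$ along $R_-$. Your flux remark handles the $R_+$ pieces. The $R_-$ pieces need the symmetric statement: inflow across $\Gamma$, hence a gradient-like field with no interior sources. That inflow is again supplied by non-isolation, since every component of $W-\Gamma$ abuts $\Gamma$ once $W$ meets $\Gamma$.
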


Dividing curves on convex surfaces also allow us to detect tightness of neighborhoods. A contact structure is \emph{overtwisted} if there is a Legendrian unknot with $\mathrm{tb} = 0$, and is \emph{tight} otherwise. Overtwisted contact structures are classified by their underlying algebro-topological data \cite{eliashberg1989class_OT} and thus tight structures are geometrically richer. Important to us is the fact that $S^3$ and $\#^g(S^1 \times S^2)$ have unique tight contact structures, each induced as the boundary of a Weinstein domain.

\begin{theorem}[Giroux criterion \cite{honda2000classification}]\label{thm:criterion}
If $\Sigma \neq S^2$ is a convex surface in a contact manifold $(Y, \xi)$, then $\Sigma$ has
a tight neighborhood if and only if the dividing set $\Gamma$ has no homotopically trivial curves. When $\Sigma = S^2$, $\Sigma$ has a tight neighborhood if and only if $\Gamma$ has a single connected component.
\end{theorem}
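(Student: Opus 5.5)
The plan is to prove the two directions separately, using Giroux flexibility together with \cref{lemma:nbd_size} to reduce everything to the vertically invariant model $(\Sigma\times\R_t,\ker(f\,dt+\beta))$, where $\Gamma=\{f=0\}$.

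\emph{Trivial curve $\Rightarrow$ overtwisted.} Suppose $\gamma\subset\Gamma$ is homotopically trivial, so it bounds a disk $D\subset\Sigma$. I would first pass to an innermost such curve, so that $\Gamma\cap D=\gamma$. Take a disk $D'\supset D$ slightly larger, with $\Gamma\cap D'=\gamma$. By Giroux flexibility, every characteristic foliation on $D'$ divided by $\gamma$ is realized by a $C^\infty$-small perturbation of $\Sigma$ inside the invariant neighborhood, so I would choose a convenient one. Concretely, I want a foliation containing a closed leg $\delta$ parallel to $\gamma$ and lying in $D'\setminus D$, that is, a Legendrian circle avoiding $\Gamma$. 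The disk in $\Sigma$ bounded by $\delta$ has surface framing equal to the contact framing, since $\delta$ is a leaf of the characteristic foliation. Thus $\delta$ is a Legendrian unknot with $\mathrm{tb}=0$, which is an overtwisted disk.

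I prefer this route to the Legendrian realization principle (\cref{thm:LeRP}). The reason is that $\partial D'$ may be isolating: this happens when $\Gamma=\gamma$ and the outer region misses $\Gamma$. Working directly with the foliation sidesteps that issue. In the sphere case, if $\Gamma$ has at least two components, then some component bounds a disk containing no other component, and the same argument applies. The main point to check is that a foliation with a closed leaf on the $\Gamma$-free annulus is genuinely divided by $\Gamma$. I would verify this by writing an explicit local model $f\,dt+\beta$ near the annulus, in which $\beta$ restricted to the annulus is of the form $\pm g(r)\,d\theta$ with an appropriate sign.

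\emph{No trivial curves $\Rightarrow$ tight.} When $\Sigma=S^2$ and $\Gamma$ is a single circle, the invariant neighborhood is contactomorphic to a neighborhood of the round sphere in standard $(\R^3,\xi_{\mathrm{st}})$, which is tight. This uses Giroux flexibility and the uniqueness of the dividing set up to isotopy.

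Otherwise $\Sigma\neq S^2$, and I would pull the invariant structure back to $\widetilde\Sigma\times\R$, where $\widetilde\Sigma$ is the universal cover, a plane or a disk. An overtwisted disk downstairs lifts to an overtwisted disk upstairs, so it suffices to prove that the cover is tight. Since no component of $\Gamma$ is null-homotopic, the lifted dividing set $\widetilde\Gamma$ has no closed components; it consists of properly embedded lines or arcs. I would then show that the invariant contact structure on $\widetilde\Sigma\times\R$ with this dividing set embeds into standard $\R^3$. The plan is to exhaust $\widetilde\Sigma$ by disks $D_n$ with Legendrian boundary meeting $\widetilde\Gamma$ in properly embedded arcs, and to identify each $D_n\times\R$ with a neighborhood of a convex disk in $(\R^3,\xi_{\mathrm{st}})$ having the same arc-type dividing set. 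Such disks exist in $\R^3$, for example as convex disks bounded by stabilized Legendrian unknots.

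I expect this last step to be the main obstacle: matching an arbitrary arc configuration on $D_n$ by a convex disk in a tight manifold, and making the identifications compatible as $n\to\infty$. My first attempt would be an inductive argument that attaches bypass-free half-disk pieces one arc at a time. If that fails, I would instead argue directly that any overtwisted disk is compact, so it lies in a single $D_n\times[-N,N]$. It would then be enough to embed each such piece separately, and the compatibility issue disappears.
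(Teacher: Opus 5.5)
The paper gives no proof of this statement; it quotes it from Honda, who credits Giroux. So I am comparing your proposal with the standard argument. Your tight direction is a workable plan. The overtwisted direction, however, has a genuine gap, exactly in the case you single out: $\Sigma$ closed of positive genus and $\Gamma=\gamma$ a single homotopically trivial curve.

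In that case, no characteristic foliation divided by $\Gamma$ has a closed leaf $\delta$ parallel to $\gamma$, on either side of $\gamma$. Such a $\delta$ would be the whole boundary of a compact region $C'\subset\Sigma\setminus\Gamma$, say $C'\subset R_-$. If $v$ directs the foliation with $\mathrm{div}_\omega v<0$ on $R_-$, then $v$ is tangent to $\delta$, so
\[
0=\int_{\delta}\iota_v\omega=\int_{C'}(\mathrm{div}_\omega v)\,\omega<0 .
\]
The same thing can be seen in the model $f\,dt+\beta$. There the contact condition says that $d(\beta/f)$ is an area form on $\Sigma\setminus\Gamma$. A graphical perturbation $t=h$ changes $\beta/f$ only by the exact form $dh$. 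So for a Legendrian $\delta$ on any such graph, $0<\int_{C'}d(\beta/f)=\int_\delta\beta/f=0$.

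This is precisely the obstruction encoded in the non-isolating hypothesis of \cref{thm:LeRP}. Giroux flexibility cannot get around it, because it only realizes divided foliations, i.e.\ graphs. Your proposed verification would not have caught this either: $\beta=\pm g(r)\,d\theta$ has radial leaves, not circular ones. In particular, no overtwisted disk lies on $\Sigma$ or on any graphical perturbation of it, so the proof must use the topology of $\Sigma\neq S^2$.

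The missing idea is to create more dividing curves first. In the problematic case $\Sigma$ is closed of positive genus, so choose a nonseparating simple closed curve $c\subset\Sigma\setminus D$. Its complement is connected and contains $\gamma$, so $c$ is non-isolating, and \cref{thm:LeRP} makes it Legendrian with twisting $0$. Folding $\Sigma$ along $c$ is a non-graphical isotopy supported near $c$. It produces $\Sigma'$ with dividing set $\gamma\cup c'\cup c''$. On $\Sigma'$ a push-off of $\gamma$ is non-isolating, and your $\mathrm{tb}=0$ argument then goes through. Alternatively, $\chi(R_+)-\chi(R_-)=\pm 2g$ violates the Bennequin--Eliashberg bound $|\langle e(\xi),[\Sigma]\rangle|\le 2g-2$, which holds in tight manifolds.

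On the tight side, two points still need care:
\begin{enumerate}
\item Convex Seifert disks of stabilized unknots do not by themselves realize an arbitrary arc configuration. One way to get every configuration is to apply \cref{thm:LeRP} on the standard convex sphere to a curve that cuts off the prescribed arcs.
\item Passing from a germ to a whole slab $D_n\times[-N,N]$ uses uniqueness of the $\R$-invariant model, as in \cref{lemma:nbd_size}.
\end{enumerate}
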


Finally, we discuss the concept of a bypass attachment as introduced by Honda \cite{honda2000classification}. Given an isotopy $\Sigma_s$, $s\in [0,1]$, of a surface in a contact manifold, convexity generically holds for all but finitely many $s$. Before and after these momentary failures of convexity, the dividing curves on the surface may evolve. Bypass attachments describe the fundamental unit of change. 

A \emph{bypass half-disk} for a surface $\Sigma$ is a topological disk $D$ with piecewise-smooth Legendrian boundary $\partial D = a \cup b$, where $a\subset \Sigma$ is an embedded Legendrian arc transversally intersecting the dividing set thrice, twice at its endpoints, $b$ is properly embedded in the complement of $\Sigma$, the natural smoothing of $a\cup b$ is a $\mathrm{tb}=-1$ unknot, and $D$ is convex with dividing curve given by the purple arc in the middle of \cref{fig:bypassfigure}. Given the existence of a bypass half-disk $D$ for a surface $\Sigma$, one may isotope $\Sigma$ past $D$ to obtain a new convex surface $\Sigma'$ with dividing set $\Gamma'$ as indicated in the lower right part of \cref{fig:bypassfigure}; see \cite[Lemma 3.12]{honda2000classification}. We call the process of isotoping $\Sigma$ past $D$ to obtain $\Sigma'$ \emph{attaching a bypass to $\Sigma$}.

\begin{figure}[ht]
	\centering
	\begin{overpic}[scale=.44]{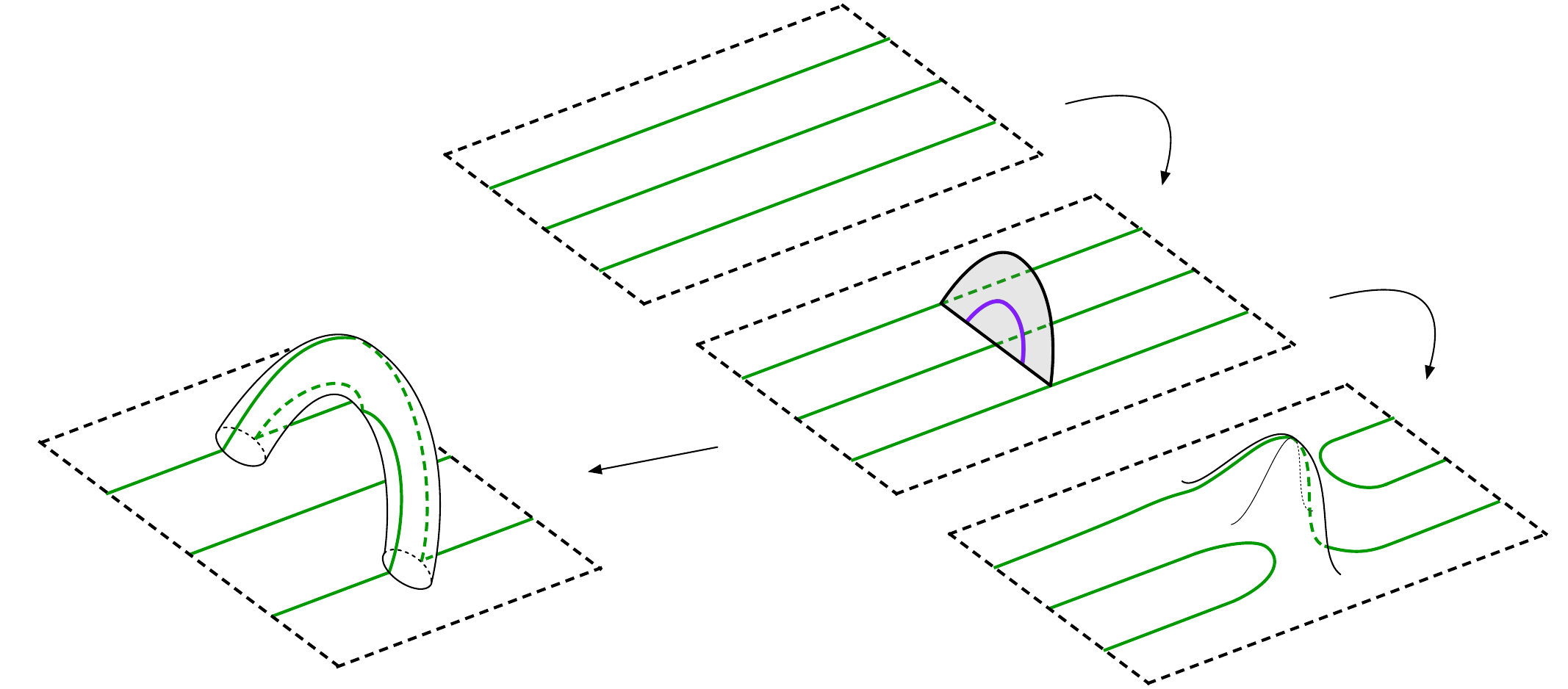}
       \put(30,37){$\Sigma$}
       \put(60,41){\small \textcolor{darkgreen}{$\Gamma$}}
       \put(66,1){$\Sigma'$}
       \put(96,13){\small \textcolor{darkgreen}{$\Gamma'$}}
        \put(75,38){\footnotesize $A$}
        \put(91.5,25.5){\footnotesize $B$}
        \put(41,16){\footnotesize $C$}
	\end{overpic}
	\caption{Bypass attachment. Arrow $A$ indicates the identification of a bypass half disk, shaded in gray, with Legendrian boundary and purple dividing set. Arrow $B$ indicates the result of isotoping $\Sigma$ past the bypass half disk, i.e.\ attaching the bypass to $\Sigma$, to obtain $\Sigma'$ with dividing curves $\Gamma'$. Arrow $C$ indicates the result of only attaching the associated contact $1$-handle.}
	\label{fig:bypassfigure}
\end{figure}

A different perspective on bypass attachments was recorded by Ozbagci \cite{ozbagci2011contact}. The neighborhood of a bypass disk --- roughly, the region bounded by $\Sigma$ and $\Sigma'$ in \cref{fig:bypassfigure} --- is given by a smoothly canceling pair of handles of index $1$ and $2$. These handles have specific models relative to the ambient contact structure and are thus called \emph{contact handles}. Roughly, the arc $b$ is the core of the $1$-handle, which is a standard jet bundle neighborhood of $b$, and (a slight retraction of) the bypass-half disk is the core of the $2$-handle. The effect of attaching only the contact-$1$ handle to $\Sigma$ is described by the lower-left part of \cref{fig:bypassfigure}.

\subsection{Exact Lagrangian cobordisms}

We are interested in the following cobordism structure between Legendrian knots. 

\begin{definition}
Let $\Lambda_-, \Lambda_+ \subset (Y^3, \xi=\ker\alpha)$ be oriented Legendrian links in a co-oriented contact manifold. A \emph{Lagrangian cobordism from $\Lambda_-$ to $\Lambda_+$} is an embedded oriented Lagrangian surface $L\subset (\R_s \times Y, d\lambda_{\mathrm{st}})$, where $\lambda_{\mathrm{st}}:=e^s\, \alpha$, such that 
\begin{enumerate}
    \item $L \cap ([-s_0, s_0]\times Y)$ is compact for any $s_0\in \R_{\geq 0}$, and 
    \item there exists a $C>0$ such that 
    \begin{align*}
        L \cap ([C, \infty) \times Y) &= [C, \infty) \times \Lambda_+, \\
        L \cap ((-\infty, -C] \times Y) &= (-\infty, -C] \times \Lambda_-.
    \end{align*}
\end{enumerate}
If $\lambda_{\mathrm{st}}\mid_L = df$ for some function $f:L \to \R$ which is constant on $[C, \infty) \times \Lambda_+$ and $(-\infty, -C] \times \Lambda_-$, then $L$ is \emph{exact}.
\end{definition}

\subsubsection{Regular Lagrangian cobordisms}

In the symplectic setting, the natural analogue of a smooth handle decomposition is a Weinstein handle decomposition. We review definitions briefly and refer to the standard reference for more details \cite{cieliebak2012stein}. A \emph{Weinstein structure} on a $4$-dimensional compact cobordism $(W, \partial_- W, \partial_+W)$ is a tuple $(\lambda, \phi)$ where
\begin{enumerate}
    \item $d\lambda$ is symplectic and the \emph{Liouville vector field} $X_{\lambda}$ defined by $\iota_{X_{\lambda}}d\lambda = \lambda$ is outwardly (resp.\ inwardly) transverse to $\partial_+ W$ (resp.\ $\partial_- W$), 
    \item $\phi: W \to \R$ is Morse with $\partial_{\pm} W = \phi^{-1}(c_{\pm})$ a regular level set, and 
    \item for some Riemannian metric, the Liouville vector field $X_{\lambda}$ is gradient-like for $\phi$. 
\end{enumerate}
If $\partial_- W = \emptyset$, we write $(W, \lambda, \phi)$ and call the cobordism a \emph{Weinstein domain}. A \emph{Weinstein homotopy} is a $1$-parameter family $(W, \partial_- W, \partial_+ W, \lambda_t, \phi_t)$ of Weinstein structures on a fixed cobordism $W$, allowing for embryonic critical points. Two Weinstein cobordisms are \emph{deformation equivalent} if they are Weinstein homotopic under the pullback by a diffeomorphism. 

Every Weinstein cobordism $(W, \partial_- W, \partial_+ W, \lambda, \phi)$ can be canonically \emph{completed} by attaching cylindrical ends
\[
 ((-\infty,0]_s \times \partial_- W, \, e^s\, \lambda\mid_{\partial_- W}) \, \cup \, (W, \partial_- W, \partial_+ W, \lambda) \, \cup \, ([0,\infty)_s \times \partial_+ W, \, e^s\, \lambda\mid_{\partial_+ W})
\]
and extending $\phi$ by an appropriate affine shift of the $s$-coordinate. Weinstein homotopic cobordisms have symplectomorphic completions \cite[Corollary 11.21]{cieliebak2012stein}, and we will usually not distinguish between a Weinstein cobordism and its completion.

\begin{remark}
The Liouville condition implies that critical points of $4$-dimensional Weinstein structures have index at most $2$.     
\end{remark}

\begin{remark}
A \emph{Stein structure} is a complex structure admitting an exhausting strictly plurisubharmonic function. Every Stein structure yields a Weinstein structure, unique up to Weinstein homotopy, and conversely every Weinstein structure arises from a Stein structure. That is, there is an isomorphism on $\pi_0$ of the space of Weinstein and Stein structures; conjecturally, the spaces are weak homotopy equivalent \cite{cieliebak2012stein}. For this reason, a casual reader may freely exchange the words "Weinstein" and "Stein" according to preference.
\end{remark}

This prelude on Weinstein topology is primarily for the purpose of introducing the notion of regularity for Lagrangian submanifolds, which plays the symplectic role of handle-ribbonness.

\begin{definition}[\cite{eliashberg2020flexible}]
A properly embedded Lagrangian cobordism $L\subset (W, \lambda, \phi)$ in a Weinstein cobordism is \emph{regular} if there is a Weinstein homotopy $(\lambda, \phi) \rightsquigarrow (\lambda',\phi')$ such that $L$ is Lagrangian throughout the homotopy and the Liouville vector field $X_{\lambda'}$ is tangent to $L$. 
\end{definition}

A consequence of the definition is that if $L$ is regular, the complement of a neighborhood of $L$ admits a Weinstein structure. Most pertinent is the consequence that if $D\subset (B^4, \lambda_{\mathrm{st}},\phi_{\mathrm{st}})$ is a regular Lagrangian disk in the standard radial Weinstein structure on the $4$-ball, then $B^4 - D$ admits the structure of a (completed) Weinstein domain.

\subsubsection{Decomposable Lagrangian cobordisms}

One explicit construction of exact Lagrangian cobordisms appeals to two local moves, which are \emph{isolated max-tb unknot birth} and \emph{ambient Legendrian surgery} as depicted in \cref{fig:decomposable}.

\begin{figure}[ht]
	\centering
	\begin{overpic}[scale=.25]{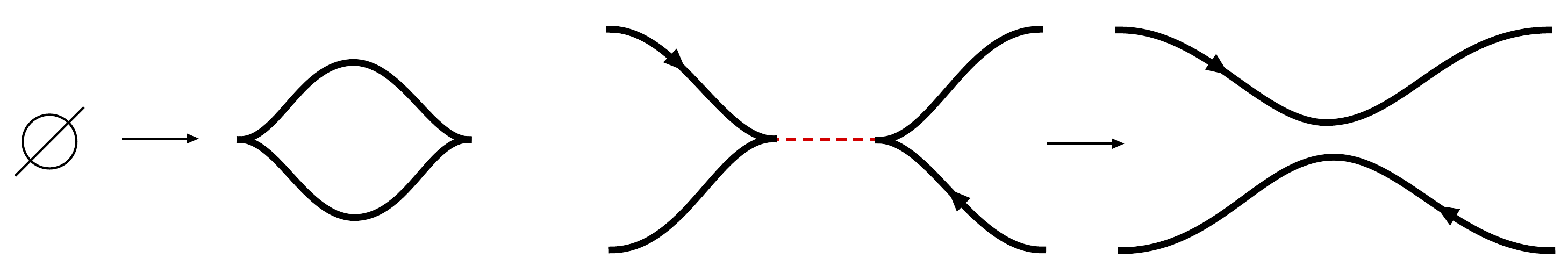}

	\end{overpic}
	\caption{The moves which define decomposable cobordisms: unknot birth, and (orientable) ambient Legendrian surgery.}
	\label{fig:decomposable}
\end{figure}

If $\Lambda_+\subset (Y, \xi)$ is obtained from $\Lambda_-$ by a sequence of Legendrian isotopies, isolated max-tb unknot births, and (orientable) ambient Legendrian surgeries, then there is an (orientable) exact Lagrangian cobordism in $\R \times Y$ from $\Lambda_-$ to $\Lambda_+$ \cite{ekholm2016cobordisms}. Any cobordism arising in this way is called \emph{decomposable}.

\subsection{Contact surgery}

Contact-$(r)$ surgery, introduced in full generality by Ding and Geiges \cite{ding2001symplectic}, see also \cite{DingGeigesStipsicz2004,Ding2009HandleMI}, is a contact Dehn surgery operation performed along a Legendrian knot $\Lambda$ which induces a topological $\mathrm{tb}(\Lambda) + r$ surgery. The contact surgery coefficient $(r)$ thus refers to the surgery framing relative to the contact planes. 

\begin{remark}
We distinguish topological surgery coefficients and contact surgery coefficients with parentheses. For example, if $\Lambda \subset S^3$ is a Legendrian knot, we write $S^3_{\Lambda}(r)$ for topological $r$-surgery along $\Lambda$ and $S^3_{\Lambda}((r))$ for contact-$(r)$ surgery, which induces topological $\mathrm{tb}(\Lambda) + r$ surgery, along $\Lambda$.   
\end{remark}

We are especially interested in contact surgeries that correspond to attachment and removal of symplectic handles as developed by Weinstein \cite{weinstein1991surgery}. A Weinstein $2$-handle is attached along a Legendrian knot and performs contact-$(-1)$ surgery. On the other hand, removal of a Weinstein $2$-handle, whose co-core is a (regular) Lagrangian disk filling a Legendrian belt-sphere, induces a contact-$(+1)$ surgery along the belt-sphere.

\begin{lemma}[\cite{DingGeigesStipsicz2004}]\label{lemma:surg_cancel}
Let $\Lambda \subset (Y, \xi)$ be a Legendrian knot, and let $\Lambda'$ be a Legendrian pushoff of $\Lambda$ in the Reeb direction. The contact manifold obtained by performing contact-$(-1)$ surgery along $\Lambda$ and contact-$(+1)$ surgery along $\Lambda'$ is contactomorphic to $(Y, \xi)$.  
\end{lemma}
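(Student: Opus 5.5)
The plan is to localize everything to a standard neighborhood of $\Lambda$ and reduce the claim to the statement that contact-$(+1)$ surgery along the \emph{dual knot} of a contact-$(-1)$ surgery undoes that surgery.

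First choose a standard neighborhood $N\cong S^1\times D^2$ of $\Lambda$ whose boundary is convex with two dividing curves of slope $\mathrm{tb}(\Lambda)$, measured with respect to the Seifert framing. Take it large enough that the Reeb pushoff $\Lambda'$ lies in $N$ as the Legendrian curve $S^1\times\{p\}$ in the jet model $J^1S^1$. Since both surgeries are supported in $N$, it suffices to show the following: the result of the two surgeries on $N$ is a contact solid torus which is contactomorphic to $N$ relative to $\partial N$.

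Next, by definition, contact-$(-1)$ surgery along $\Lambda$ removes a smaller standard neighborhood $N_0\subset N$ of $\Lambda$, disjoint from $\Lambda'$. It then reglues a solid torus $V$, with the meridian sent to the curve of contact framing $\mathrm{tb}(\Lambda)-1$, carrying the unique tight structure compatible with the boundary dividing set. The core $\Lambda^*$ of $V$ is a Legendrian knot (the surgery dual). Contact-$(+1)$ surgery along $\Lambda^*$, performed on a standard neighborhood of $\Lambda^*$ that we may take to be $V$ itself, removes $V$ and reglues along the framing that is $+1$ relative to the contact framing of $\Lambda^*$. This framing is exactly the meridian of the original $N_0$. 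The regluing is therefore $N_0$ back again, with its unique tight structure determined by $\partial N_0$ (Honda's classification of tight solid tori with two longitudinal dividing curves). So $(+1)$ surgery on $\Lambda^*$ recovers $(Y,\xi)$.

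It remains to show that $\Lambda'$ is Legendrian isotopic, inside the surgered $N$, to $\Lambda^*$. Equivalently, it is isotopic to a Legendrian in $N\setminus N_0$ parallel to $\partial N_0$ whose dividing-curve framing agrees with the relevant slope. For this I would work in the thickened torus $T^2\times[0,1]\cong N\setminus \mathrm{int}(N_0)$, which is a vertically invariant neighborhood of $\partial N_0$ in the sense of \cref{lemma:nbd_size}. Here $\Lambda'$ can be realized as a Legendrian divide (or ruling curve) of slope $\mathrm{tb}(\Lambda)$ on a convex torus $T^2\times\{1/2\}$.

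In the surgered manifold, $V\cup (T^2\times[0,1/2])$ is a solid torus whose meridian has slope $\mathrm{tb}-1$. On the torus $T^2\times\{1/2\}$ the contact framing of $\Lambda'$ equals its framing from the torus, and this differs from the meridian by one. So $\Lambda'$ is isotopic to the core of this solid torus with the correct contact framing. The topological side is the Kirby-calculus check that surgeries on two parallel copies with linking $\mathrm{tb}$ and framings $\mathrm{tb}-1$ and $\mathrm{tb}+1$ cancel.

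The main obstacle is this last identification: one needs that the solid torus $V\cup(T^2\times[0,1/2])$ is tight with two dividing curves of slope $\mathrm{tb}$ on its boundary. Only then does the uniqueness of tight solid tori make $\Lambda'$ a standard Legendrian core rather than a stabilized one. I would verify this by noting that the region is obtained from $V$ by an $I$-invariant collar, so it is tight. Its boundary dividing curves are longitudinal, since they intersect the meridian once. Honda's classification then gives that it is a standard neighborhood of its Legendrian core, which is isotopic to the divide $\Lambda'$.
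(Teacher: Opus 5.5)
The paper gives no proof of this lemma; it is quoted from Ding--Geiges--Stipsicz. Your argument is correct, and it is the standard proof of the cancellation lemma. The Reeb push-off becomes Legendrian isotopic to the surgery dual $\Lambda^*$. Contact-$(+1)$ surgery on $\Lambda^*$ then re-glues a tight solid torus with the original meridian, and by Kanda--Honda uniqueness that torus is $N_0$ again. Your slope bookkeeping checks out: with $\mu\cdot\lambda_c=1$ and new meridian $\mu^*=\lambda_c-\mu$, the original meridian $\mu$ is the $+1$ slope relative to the contact framing $\lambda_c$ of $\Lambda^*$.

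The last step needs one more sentence. Tightness of $V'=V\cup(T^2\times[0,1/2])$ together with Kanda's theorem only shows that $V'$ is a standard neighborhood of \emph{some} Legendrian core. You still need that core to be Legendrian isotopic to $\Lambda^*$. This step, and the isotopy from the divide to the core, are both easy in the model $J^1S^1$ with $\alpha=dz-y\,dx$:
\begin{itemize}
\item The field $y\partial_y+z\partial_z$ is contact and transverse to the tori $\{y^2+z^2=\rho^2\}$. On these tori, $\alpha$ restricts to $\rho\cos\theta\,(d\theta-dx)$, so the Legendrian divides are exactly the Reeb push-offs $\{y=0,\,z=\pm\rho\}$.
\item Extend this field into $V$ by a contact Hamiltonian that vanishes near $\Lambda^*$. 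Its flow gives a contactomorphism $V\to V'$ that fixes $\Lambda^*$ and carries a divide of $\partial V$ to $\Lambda'$.
\item By Kanda's uniqueness, $V$ is the standard model neighborhood of its core $\Lambda^*$, and contactomorphisms send divides to divides. So in model coordinates for $V$, the divides of $\partial V$ are Legendrian isotopic to $\Lambda^*$ through the curves $\{y=0,\,z=s\}$.
\end{itemize}

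Two minor corrections. First, $\Lambda$ need not be null-homologous in $Y$, so measure slopes relative to the contact framing rather than a Seifert framing. Second, \cref{lemma:nbd_size} only gives the existence of an invariant neighborhood; the invariance of $N\setminus \mathrm{int}(N_0)$ comes from the explicit radial field above.
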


Another aspect of contact surgery we will need is the description of contact-$(\pm1)$ handleslides \cite{Ding2009HandleMI}, specifically those arising as part of Weinstein homotopies. Weinstein homotopies of critical handles are executed by Legendrian isotopy of their attaching and belt spheres. Handleslides correspond to $\xi$-transverse intersections of Legendrian knots which occur in generic $1$-parameter families. As such, a handleslide is executed along a short nondegenerate Reeb chord between two attaching spheres or belt spheres. In the front projection, the model for sliding a Legendrian across a contact-$(\pm 1)$ surgery is given in \cref{fig:slidesplus1}. The left and right models are equivalent (differing only in the type of nondegeneracy of the Reeb chord), but it is convenient to appeal to both. 

\begin{figure}[ht]
	\centering
	\begin{overpic}[scale=.45]{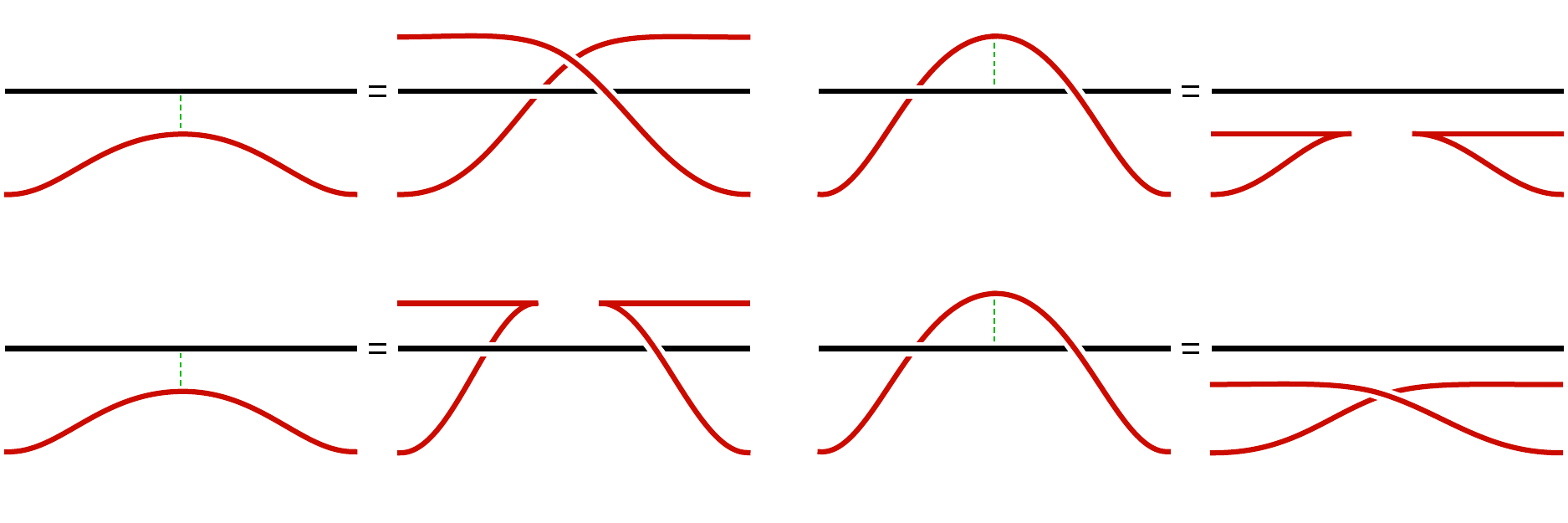}
        \put(18,28.5){\tiny $(+1)$}
        \put(43.25,28.5){\tiny $(+1)$}
        \put(70,28.5){\tiny $(+1)$}
        \put(95,28.5){\tiny $(+1)$}

        \put(18,12){\tiny $(-1)$}
        \put(43.25,12){\tiny $(-1)$}
        \put(70,12){\tiny $(-1)$}
        \put(95,12){\tiny $(-1)$}
       
	\end{overpic}
	\caption{Front projection models for a contact-$(\pm 1)$ handleslide.}
	\label{fig:slidesplus1}
\end{figure}

\subsubsection{Higher contact surgeries}

Next we briefly describe contact surgery with other coefficients besides $(\pm 1)$. For our purpose, it will suffice to consider contact-$(+n)$ surgery, where $n\in \N$. 

Given a Legendrian knot $\Lambda\subset (Y, \xi)$, excise a standard neighborhood to obtain a contact manifold $(Y_{\Lambda}, \xi)$ with convex torus boundary $\partial Y_{\Lambda} \cong T^2$ supporting two parallel dividing curves giving the contact framing of $\Lambda$. Choose a coordinate system on $T^2$ identifying the contact framing with slope $0$. To perform contact-$(+n)$ surgery we perform a topological slope-$n$ Dehn filling of $\Lambda\subset (Y, \xi)$ and then endow the surgery torus with a tight contact structure. 

By uniqueness of tight contact structures on the $3$-ball, the choice of contact structure on the surgery torus is determined by the contact structure in a neighborhood of the meridian disk. We may specify such a contact structure by specifying the isotopy class of the dividing curves on the disk. A slope $n$ meridian disk will intersect $\Gamma_{T^2}$ in $2n$-many spots, forcing $n$-many dividing arcs on the disk. These must all be boundary parallel, for otherwise, by edge-rounding of convex surfaces \cite[Lemma 3.11]{honda2000classification}, the boundary sphere obtained from gluing in the disk would have a homotopically trivial dividing curve and thus an overtwisted contact structure. Consequently, in the case $n>1$, there are two choices of contact structure, corresponding to the two different sets of boundary parallel arcs (differing in their cyclic parity).   

Given an orientation of $\Lambda$, there is a way to coherently assign a denotation of \emph{positive} or \emph{negative} to the two choices of contact structure; see, for example, \cite{lisca2011transverseinvariants}. The details are unimportant, and we will always use the negative choice of contact-$(+n)$ surgery, calling it the \emph{preferred choice}. The following property is sufficient for us.

\begin{proposition}[\cite{lisca2011transverseinvariants}]
Let $\Lambda\subset (Y, \xi)$ be an oriented Legendrian knot, and let $S_-(\Lambda)$ be a negative Legendrian stabilization. Let $n\in \N$. Then there is a contactomorphism
\[
Y_{\Lambda}((n)) \cong Y_{S_-(\Lambda)}((n+1))
\]
where both sides use the preferred contact surgery. 
\end{proposition}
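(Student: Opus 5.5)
The plan is to compare the two surgeries directly through the dividing-set description of the surgery solid torus given above. Let $N$ be a standard neighborhood of $\Lambda$. We can choose $N$ so that it contains a standard neighborhood $N_-$ of $S_-(\Lambda)$. The region $N\setminus N_-$ is then a thickened torus $T^2\times I$. Its boundary consists of two convex tori, with dividing slopes $0$ (for $\Lambda$) and $-1$ (for $S_-(\Lambda)$), both measured in the contact framing of $\Lambda$.

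This layer is a basic slice, and its sign is determined by the sign of the stabilization. Since both sides of the claimed contactomorphism agree outside $N$, it suffices to show two things:
\begin{enumerate}
\item the solid torus obtained from $(T^2\times I)\cup_{\partial N_-} V'$ is a tight solid torus;
\item this solid torus carries the same tight structure as the solid torus $V$ used for contact-$(n)$ surgery on $\Lambda$.
\end{enumerate}
Here $V'$ is the surgery torus for contact-$(n+1)$ surgery on $S_-(\Lambda)$.

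The first check is topological. In the framing of $S_-(\Lambda)$, contact-$(n+1)$ surgery is slope $\mathrm{tb}(\Lambda)-1+n+1=\mathrm{tb}(\Lambda)+n$, which is the same topological surgery. So in both cases the meridian of the glued-in torus is the slope-$n$ curve relative to the contact framing of $\Lambda$.

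Next, I would compute the dividing sets on the meridian disks.
\begin{itemize}
\item For $V$: the meridian disk meets $\Gamma_{\partial N}$ in $2n$ points and carries $n$ boundary-parallel arcs.
\item For $V'$: the meridian disk meets the slope $-1$ dividing set in $2(n+1)$ points.
\end{itemize}
Extending the $V'$ meridian disk through the basic slice $T^2\times I$ gives a meridian disk for the union, with Legendrian boundary on $\partial N$. To control its dividing set I would use Honda's classification of tight structures on solid tori. The union of a solid torus with a basic slice is again a tight solid torus, because the preferred boundary-parallel configuration sits on the correct side and there is no mixing of signs, so no overtwisted disk appears. Its meridian slope relative to the boundary dividing slope $0$ is $n$. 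Tight solid tori with this boundary data are classified by the relative Euler class, equivalently by the sign pattern of the continued-fraction (bypass) layers. The gluing adds one extra bypass layer of the stabilization's sign.

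Equivalently, there is a bypass picture. The basic slice is obtained by attaching a single bypass to $\partial N_-$, which reduces the intersection with the meridian by $2$. On the disk, this bypass merges one boundary-parallel arc of $V'$ into its neighbor, leaving $n$ boundary-parallel arcs. With the negative sign convention, the result is the preferred configuration for $V$.

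The main obstacle is the sign bookkeeping. One must verify that a negative stabilization paired with the preferred (negative) choice for $(n+1)$ yields the negative choice for $(n)$, rather than the other choice or an overtwisted gluing. The mismatched pairing $S_+$ with the negative choice is exactly the case where mixing of signs produces a homotopically trivial dividing curve. My first attempt would be to compute relative Euler classes of the meridian disks via $\chi(R_+)-\chi(R_-)$, using the formula in \cref{thm:legendrian_bdry_convex}. Adding a negative bypass layer changes $\chi(R_+)-\chi(R_-)$ by the same amount that distinguishes consecutive preferred choices. I would check this first in the base case $n=1$, where contact-$(+1)$ surgery on $\Lambda$ has a unique structure and the claim reduces to a direct bypass computation, and then induct on $n$.
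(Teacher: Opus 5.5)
\paragraph{A genuine gap at the sign step.} The paper does not prove this statement. It imports it from \cite{lisca2011transverseinvariants} and says the positive/negative labelling details are unimportant. So your argument has to carry the sign identification itself, and that is exactly the step you leave open.

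Your reduction is sound:
\begin{itemize}
\item both surgeries agree off $N$;
\item $N\setminus N_-$ is a basic slice with slopes $0$ and $-1$;
\item both fillings have meridian of slope $n$ in the contact framing of $\Lambda$;
\item in the compatible case, the bypass joins two adjacent boundary-parallel arcs of $D'$ and leaves $n$ boundary-parallel arcs.
\end{itemize}

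But the sentences ``with the negative sign convention, the result is the preferred configuration'' and ``$S_+$ with the negative choice is the mismatched pairing'' just restate the proposition. Nothing you establish implies them.

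The proposed induction on $n$ cannot supply them either. The case $n$ is a statement about the preferred choices at levels $n$ and $n+1$. The case $n-1$ says nothing about level $n+1$ unless the definition itself relates consecutive levels.

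Separately, ``no mixing of signs, so no overtwisted disk'' does not show that $(N\setminus N_-)\cup V'$ is tight. A glued meridian disk carrying only boundary-parallel dividing arcs does not certify tightness, because the complementary ball could still be overtwisted. So the disk picture alone does not identify the union with $V$.

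\paragraph{How to close both gaps.} Write $e=\chi(R_+)-\chi(R_-)$ and $A=D\cap(N\setminus N_-)$.
\begin{itemize}
\item \emph{The layer's contribution is independent of $n$.} The relative Euler class of the stabilization layer is $\pm(v_1-v_0)=\mp\mu$, with $v_0=\lambda_c$ and $v_1=\lambda_c-\mu$. Since $\mu\cdot(\lambda_c+n\mu)=\mu\cdot\lambda_c$, its value $e(A)$ on the slope-$n$ annulus equals its value on a longitudinal annulus. That value is the fixed $\pm1$ recording the rotation change under negative stabilization.
\item \emph{The sign is forced.} We have $e(D)=e(D')+e(A)$, with $e(D')=\pm n$ and tight values $e(D)=\pm(n-1)$. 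So the union can be tight only if $e(D')$ and $e(A)$ have opposite signs. In that case, for $n\ge 2$, $e(D)$ has the sign of $e(D')$. The outcome is therefore never ``the other choice'': it is either the same-sign choice or overtwisted.
\item \emph{Tightness.} Work inside the tight $V$. It is a standard neighborhood of a Legendrian core with boundary slope $\infty$, union a basic slice $B(\infty,0)$. A convex torus of slope $-1$ splits $B(\infty,0)$ into two basic slices of equal sign. Honda's uniqueness of basic slices, and of the two tight tori with meridian $n$ and boundary slope $-1$, then identifies the pieces with $N\setminus N_-$ and $V'$ when the signs are as above.
\end{itemize}

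This proves the proposition for the $n$-independent convention ``the meridian-disk Euler class has sign opposite to the $S_-$ layer.'' What is still missing is a check that this convention is the negative choice as actually defined, via the Ding--Geiges algorithm with negative stabilizations as in Lisca--Stipsicz.
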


\subsection{Transverse surgery}
Surgery along transverse knots originated in the work of Gay \cite{gay2002symplectichandles}, and was further developed and clarified by Baldwin and Etnyre \cite{baldwin2013admissible} and Conway \cite{conway2019transverse}. Transverse surgery comes in two flavors, admissible and inadmissible, which for our purposes should informally be thought of as corresponding to attaching and removing, respectively, a symplectic $2$-handle with symplectic core and co-core.  

We begin with admissible transverse surgery, which is more subtle. Let $\mathcal{T}'\subset (Y', \xi')$ be a transverse knot. There a neighborhood $N':=N(\mathcal{T}')$ of $\mathcal{T}'$ contactomorphic to $(S^1_z \times D^2_{r,\theta}(R), \ker(\cos r\, dz + r\sin r\, d\theta))$, where $R\in (0,\pi)$ and $D^2(R)$ is a disk of radius $R$. Relative to the $(z, \theta)$ coordinate system, the slope of the characteristic foliation on the torus $\{r = r_0\}$ is $-\tfrac{\cos r_0}{r_0 \sin r_0}$ (so that the slope tends to $-\infty$ as $r_0 \to 0^+$, and tends to $+\infty$ as $r_0 \to \pi^-$). With reference to this coordinate system, any topological framing $\lambda'$ of $\mathcal{T}'$ represented by a slope $s< -\tfrac{\cos R}{R \sin R}$ is called \emph{admissible}.

\begin{remark}
A leaf of the characteristic foliation on the torus $\{r = r_0\}$ gives a Legendrian approximation of $\mathcal{T}$. This is only well-defined up to negative Legendrian stabilization, as a torus with smaller radius (thus more negative slope) gives a Legendrian approximation related to the initial one by negative stabilization. We call any such approximation \emph{a Legendrian pushoff of $\mathcal{T}$}.
\end{remark}

The following theorem is then due to Gay \cite{gay2002symplectichandles}, with the "moreover" statement following from work of Wendl \cite[Theorem 1.5]{wendl2013symplectichandles}, but stated in the language of \cite[Theorem 2.2]{conway2019transverse}.

\begin{theorem}[\cite{gay2002symplectichandles,wendl2013symplectichandles}]\label{thm:admissible_handle}
Let $(X',\omega')$ be a symplectic manifold with strongly convex boundary, and let $\mathcal{T}' \subset \partial X'$ be a transverse knot with a chosen framing $\lambda'$. If $\lambda'$ is an admissible framing, then a $\lambda'$-framed symplectic $2$-handle may be attached along $\mathcal{T}'$ to produce a symplectic manifold $(X,\omega)$ with weakly convex boundary such that the contact structure on $\partial X$ is obtained from admissible transverse $\lambda'$ surgery along $\mathcal{T}'$. Moreover, the co-core of the handle attachment is a symplectic disk. 
\end{theorem}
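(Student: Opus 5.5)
Since the statement is quoted from \cite{gay2002symplectichandles,wendl2013symplectichandles}, I would reconstruct Gay's local handle model rather than attempt anything global. The argument is local: everything happens in a collar of $\partial X'$ near $\mathcal{T}'$, together with an explicit model of the handle.

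\emph{Normal form near $\mathcal{T}'$.} Since $\partial X'$ is strongly convex, a Liouville field $V$ near the boundary identifies a collar with a piece of the symplectization $((-\epsilon,0]\times Y', d(e^s\alpha'))$. By the transverse neighborhood theorem I would choose $\alpha'$ near $\mathcal{T}'$ to be the model form $\cos r\,dz + r\sin r\,d\theta$ on $N' = S^1_z\times D^2(R)$. So near $\mathcal{T}'$, $X'$ looks like a fixed model $M_R \subset \R\times S^1\times D^2(R)$.

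\emph{The handle model.} Next I would write down a symplectic $2$-handle $H = D^2_{(r_1,\theta_1)}\times D^2_{(r_2,\theta_2)}$ with a split form $\omega_H = \omega_1\oplus\omega_2$. Both the core $D^2\times\{0\}$ and the co-core $\{0\}\times D^2$ are then automatically symplectic, which is where the ``moreover'' comes from. I would equip $H$ with a vector field $W$, a Liouville-type field such as $-\tfrac12 r_1\partial_{r_1} + \tfrac12 r_2\partial_{r_2}$ suitably twisted in the $\theta$-directions, chosen so that:
\begin{enumerate}
\item $W$ points into $H$ along the attaching region $\partial D^2\times D^2$;
\item $W$ points out of $H$ along $D^2\times\partial D^2$.
\end{enumerate}
The attaching region $S^1\times D^2$ carries the contact form $\iota_W\omega_H$. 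The framing of the core circle given by the product structure, as seen from the model $N'$, is a slope that I can tune by the parameters of the twist.

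\emph{Gluing.} I would reparametrize radially, replacing $r$ by a function of $r_2$, so that $\iota_W\omega_H$ on the attaching region agrees with $\cos r\,dz + r\sin r\,d\theta$ on a slightly smaller solid torus $\{r\le R_0\}$ with $R_0<R$. I would then glue $H$ to $X'$ along it using the flow of $V$. Where $V$ and $W$ disagree in the overlap, I would interpolate between them. The result is a symplectic manifold $(X,\omega)$ whose new boundary is $(Y'\setminus N')\cup (D^2\times S^1)$, glued with the framing $\lambda'$. This is exactly topological $\lambda'$ surgery, and the contact structure on the new solid torus is the one prescribed by admissible transverse surgery.

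\emph{Main obstacle: positivity of $\omega$ on the new boundary.} Because $V$ and $W$ are only interpolated and need not be globally Liouville, I would not claim strong convexity. I would only show that the new boundary $\partial X$ has a contact structure $\xi$ with $\omega|_\xi>0$, i.e.\ weak convexity. This comes down to a computation on the tori $\{r=r_0\}$ in the transition region. There one needs the contact planes of the glued structure to rotate monotonically, in the positive sense, from the characteristic slope $-\tfrac{\cos R}{R\sin R}$ to the meridional slope of the filling.

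This is exactly where admissibility enters. The condition $s<-\tfrac{\cos R}{R\sin R}$ says that the surgery slope lies on the correct side of the foliation slope. Under that condition the path of slopes avoids $s$, and the interpolating plane field can be kept both contact and $\omega$-positive. If the inequality failed, the interpolation would be forced through the surgery slope and positivity would break.

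To carry out this step I would first try Gay's explicit interpolation with polar functions $(f(r),g(r))$, taking $\alpha = f\,dz + g\,d\theta$. Under this ansatz the contact condition becomes $fg'-gf'>0$, and $\omega$-positivity becomes an inequality on the curve $(f,g)$. Admissibility is precisely what lets this planar curve be completed to a smooth filling curve satisfying both conditions.
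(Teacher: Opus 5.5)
The paper gives no proof of this statement. It imports it from Gay, in the formulation of Conway's Theorem 2.2, and credits Wendl for the symplectic co-core, so your sketch can only be judged on its own merits. A $T^2$-invariant local model with a split handle is a reasonable way to reconstruct Gay's construction, and in such a model the ``moreover'' is indeed automatic. But two steps do not go through as written.

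\textbf{The gluing step fails, and twisting $W$ cannot set the framing.} For $\omega_H=\omega_1\oplus\omega_2$, the restriction of $\omega_H$ to $\partial D^2\times D^2$ is just $\omega_2$ (e.g.\ $r_2\,dr_2\wedge d\theta_2$). Its kernel is the core direction $\partial_{\theta_1}$, however you twist $W$. A symplectic gluing along the attaching region must carry this kernel to $\ker d\alpha'$, the Reeb direction of the boundary contact form. For $\alpha'=\cos r\,dz+r\sin r\,d\theta$ that direction is $(\sin r+r\cos r)\,\partial_z+\sin r\,\partial_\theta$, whose slope varies with $r$. So no radial reparametrization, indeed no diffeomorphism, identifies the two. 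For the same reason the framing of a split handle is always the one given by closed Reeb orbits of the boundary form near $\mathcal{T}'$. What you actually need is to use strong convexity to replace $\alpha'$ near $\mathcal{T}'$ by a form $(c-sG(r))\,dz+G(r)\,d\theta$ with the same kernel, whose Reeb orbits are $\lambda'$-curves. You then glue by $\partial_{\theta_1}\mapsto\partial_z+s\,\partial_\theta$ and $\partial_{\theta_2}\mapsto\partial_\theta$. Alternatively, attach along a curved invariant hypersurface of $H$; the framing is again set by this identification, not by $W$. Since the direction of $(c-sG,G)$ only approaches that of $(-s,1)$, such a form exists only on $\{r<r_s\}$, where $r_s$ is the radius at which the foliation slope equals $s$. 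So the attaching region must sit inside that torus, not in a solid torus ``slightly smaller'' than $N'$.

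\textbf{The weakly convex cap is only asserted, and the role you give admissibility is off.} This cap is the real content of the theorem. Write the new boundary form as $F\,dz+G\,d\theta$.
\begin{enumerate}
\item Smoothness at the dual core forces $F+sG=0$ there. So the foliation slope $-F/G$ starts exactly at $s$; it does not ``avoid'' $s$.
\item The contact condition $FG'-GF'>0$ makes this slope increase monotonically until you rejoin $\alpha'$ on some torus $\{r=r_1\}$. Without adding twisting, this requires $r_s<r_1<R$.
\item Such a torus exists in $N'$ precisely when $s<-\cos R/(R\sin R)$. That is where admissibility is used: there must be room in $N'$ beyond the slope-$s$ torus.
\item You must still verify $\omega|_\xi>0$. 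This is not a condition on $(F,G)$ alone. It is the inequality $F\gamma_2'-G\gamma_1'>0$, with the orientation for which $FG'-GF'>0$, where $\gamma$ is the moment-map curve of the new hypersurface.
\item That inequality must hold along a curve running from the co-core edge along the belt region, around the corner where the belt region meets the old boundary, and back into the thin symplectization collar.
\item To identify the result with admissible transverse surgery, you must also show that the resulting invariant structure is the contact cut of $\{r\ge r_s\}$ along its slope-$s$ leaves.
\end{enumerate}
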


To efficiently describe inadmissible transverse surgery, we appeal to work of Conway \cite{conway2019transverse}, who showed that every inadmissible transverse surgery can be performed by contact-$(r)$ surgery on a Legendrian approximation for some $r>0$. Specifying to integral surgery coefficients, let $\mathcal{T}\subset (Y, \xi)$ be a transverse knot and let $\lambda$ be a topological framing determining a $0$ slope as above. Let $\Lambda \subset (Y, \xi)$ be a Legendrian pushoff of $\mathcal{T}$. By further negative stabilization if necessary, we can assume that, in the coordinate system specified by $\lambda$, $\Lambda$ has slope $-n$ for some $n \in \N_{>0}$.

\begin{theorem}[\cite{conway2019transverse}]
Inadmissible transverse surgery along $\mathcal{T}$ with framing $\lambda$ is contact isotopic to the preferred contact-$(+n)$ surgery along $\Lambda$.
\end{theorem}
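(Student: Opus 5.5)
The plan is to realize both surgeries as "remove a solid torus around $\mathcal{T}$ and reglue a solid torus whose meridian is $\lambda$." Once both are in that form, it suffices to compare the two reglued solid tori. These are contact structures on a solid torus with fixed convex boundary, and we will compare them using the classification of tight solid tori (Honda) together with Giroux flexibility.

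\emph{Step 1: nested neighborhoods.} Fix the model $N(\mathcal{T})\cong (S^1_z\times D^2(R), \ker(\cos r\,dz + r\sin r\,d\theta))$ and coordinates in which $\lambda$ has slope $0$. Pick the radius $r_0<R$ at which the characteristic foliation of $\{r=r_0\}$ has slope $-n$. A leaf of this linear foliation is the Legendrian pushoff $\Lambda$. Perturb $\{r=r_0\}$ to a convex torus $T_0$ with two dividing curves of slope $-n$; it bounds a standard neighborhood $N(\Lambda)\subset N(\mathcal{T})$. The layer $N(\mathcal{T})-N(\Lambda)$ is a thickened torus whose characteristic foliations rotate monotonically from slope $-n$ to the boundary slope of $N(\mathcal{T})$. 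Hence it is a minimally twisting, universally tight $T^2\times I$.

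\emph{Step 2: identify the inadmissible filling.} By definition, inadmissible transverse $\lambda$ surgery removes a smaller torus $\{r\le r_1\}$, whose foliation slope is more negative than $-n$, or equivalently removes $N(\mathcal{T})$ after shrinking. It then reglues a solid torus $V$ with meridian slope $0$, carrying the contact form obtained by continuing the rotating model "backwards" so that the foliation slopes sweep from the boundary slope to $0$. Restrict attention to the complement of $\{r<r_0\}$ in the new manifold. This gives a solid torus $V_0$ with meridian slope $0$ and convex boundary $T_0$ with dividing slope $-n$. The same $V_0$ arises for contact-$(+n)$ surgery on $\Lambda$: it is a slope-$n$ filling relative to the contact framing, i.e.\ topological slope $0$ in $\lambda$ coordinates. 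So both manifolds are $(Y-N(\Lambda))\cup V_0$, and they differ only in the contact structure on $V_0$.

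\emph{Step 3: compare contact structures on $V_0$.} First I will check that the inadmissible model on $V_0$ is tight. The foliation rotates through fewer than $\pi$ of slope, so it embeds in a standard model of the form $\ker(\cos r\,dz+r\sin r\,d\theta)$ with $r<\pi$, which is tight. By uniqueness of tight structures on $B^3$ (as in the discussion of higher contact surgery above), a tight structure on $V_0$ is determined by the dividing arcs on a convex meridian disk. That disk meets $\Gamma_{T_0}$ in $2n$ points and must carry $n$ boundary-parallel arcs. This leaves two candidates, differing by cyclic parity and distinguished by the sign of the relative Euler class.

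\emph{Main obstacle.} The hard step is showing that the rotating inadmissible model gives the negative (preferred) choice. My first approach is to decompose $V_0$ into bypass layers: a standard $\mathrm{tb}$-type solid torus attached through a sequence of basic slices, as in Honda. In the rotating model all of these basic slices have the same sign, because the model is universally tight. I would then compute that sign by comparing with the pushoff relation $\Lambda\mapsto S_-(\Lambda)$. Passing to a smaller radius corresponds to negative stabilization, so the layers are negative basic slices. This matches the convention that makes the quoted proposition of Lisca--Stipsicz ($Y_\Lambda((n))\cong Y_{S_-(\Lambda)}((n+1))$) consistent. As a consistency check, independence from $n$ follows from that proposition. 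Finally, Giroux flexibility near $T_0$ glues the identification into a global contact isotopy.
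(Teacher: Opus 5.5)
The paper does not prove this statement; it is quoted from Conway, so there is no in-paper argument to compare against. Your outline is sound and is, in substance, the standard argument for Conway's result. After shrinking the transverse neighborhood, both operations become ``delete $N(\Lambda)$ and reglue a solid torus $V_0$ of meridian slope $0$ along the same convex torus $T_0$.'' Honda's classification then leaves a unique tight filling when $n=1$. When $n\geq 2$ it leaves exactly two, distinguished by the sign of the basic slice from $\infty$ to $-n$ lying outside a standard neighborhood of a Legendrian core of dividing slope $\infty$.

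Two steps currently rest on an appeal to ``consistency'' and should each get an explicit line.

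First, choose $r_1$ so that the slope at $r_1$ is at most $-n-1$. Then the layer $N(\Lambda)\setminus N(S_-(\Lambda))$ literally lies in $V_0$. Since $\infty,-n-1,-n$ span a Farey triangle, tightness of $V_0$ forces the slice from $\infty$ to $-n$ to carry the sign of this negative stabilization layer. Universal tightness is more than you need here.

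Second, you must actually identify the preferred filling with that sign, rather than assert that the conventions match. You can read it off from the Ding--Geiges--Stipsicz algorithm, where the single stabilization is $S_-$. Alternatively, argue directly from the quoted Lisca--Stipsicz relation. Suppose the preferred filling for contact $(+(n+1))$-surgery on $S_-(\Lambda)$ carried the positive sign. Gluing it to the negative layer $N(\Lambda)\setminus N(S_-(\Lambda))$ would put oppositely signed slices across a Farey triangle, so the surgered manifold would be overtwisted. But take $\Lambda=S_-^{n-1}(U)$ with $U$ the max-tb unknot. Repeated use of the relation identifies $S^3_{S_-(\Lambda)}((n+1))$ with the tight $S^3_U((+1))\cong S^1\times S^2$, a contradiction.

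A minor wording fix: in Step~2, ``the complement of $\{r<r_0\}$'' should read ``the part of the new manifold bounded by $T_0$.''
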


As far as identifying admissible surgeries, the following lemma will suffice. It is a consequence of the proof of \cite[Theorem 1.13]{conway2021disks}, but we isolate the argument for convenience. 

\begin{lemma}\label{lemma:admissible}
Let $\mathcal{T}\subset (Y, \xi)$ be a transverse knot and let $(Y', \xi')$ be the contact manifold obtained by an integral inadmissible transverse surgery along $\mathcal{T}$. Then there is a transverse dual knot $\mathcal{T}'\subset (Y', \xi')$ such that the dual surgery framing $\lambda'$ is admissible for $\mathcal{T}'$.
\end{lemma}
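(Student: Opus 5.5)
The plan is to work in the explicit local model for the inadmissible surgery and read off the dual knot there. Take a standard neighborhood $N = S^1_z \times D^2(R)$ of $\mathcal{T}$ with contact form $\cos r\, dz + r \sin r\, d\theta$. By Conway's theorem quoted above, inadmissible transverse surgery with framing $\lambda$ is the preferred contact-$(+n)$ surgery on a Legendrian pushoff $\Lambda$ of slope $-n$. Rather than use the Legendrian description directly, I would use Conway's original construction of inadmissible surgery: remove a smaller solid torus $\{r \le r_0\}$, where the boundary characteristic foliation has some slope $s_0$. Then collapse the leaves of slope $\lambda$ on the torus $\{r = r_1\}$ for an appropriate $r_1$. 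Equivalently, one takes the region $\{r_1 \le r \le R\}$ with $r_1$ chosen so that the foliation slope on $\{r=r_1\}$ equals the surgery slope, and performs a contact "rotation" past $\pi/2$ so that the foliation slopes sweep through the desired range. The new solid torus glued in is then modeled on a neighborhood of a transverse core.

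The key steps are as follows.
\begin{enumerate}
\item Identify $Y'$ as $(Y - N(\mathcal{T})) \cup V$. Here $V$ is a solid torus whose meridian is $\lambda$, and its contact structure is given by a form of the same type $\cos r\, dz' + r\sin r\, d\theta'$ in new coordinates adapted to $\lambda$.
\item Observe that the core $\mathcal{T}'$ of $V$ is transverse, since $\cos r \ne 0$ near $r=0$ and the form is standard there.
\item Compute the dual framing $\lambda'$, which is the meridian $\mu$ of the original $\mathcal{T}$. Express it in the $(z',\theta')$ coordinates of $V$ via the gluing matrix.
\item Compare the slope of $\lambda'$ with the characteristic foliation slope on $\partial V$.
\end{enumerate}

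Admissibility means the slope of $\lambda'$ is less than $-\cos R'/(R'\sin R')$, where $R'$ is the outer radius of $V$. The reason to expect this is that on $\partial N$ the meridian $\mu$ is "steeper" than every foliation slope. Inside $V$, the foliation slopes range from $-\infty$ up to the boundary value. So the meridian, once expressed in $V$'s coordinates, should lie in the admissible region.

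I expect the main obstacle to be the slope bookkeeping in steps 3 and 4, with the signs and orientation conventions of Farey-type coordinates. To make this check concrete, I would first handle it via the Legendrian description. Contact-$(+n)$ surgery on $\Lambda$ decomposes, by Ding–Geiges–Stipsicz, into contact-$(+1)$ surgery on $\Lambda$ followed by contact-$(-1)$ surgeries on stabilized pushoffs. The dual of a contact-$(+1)$ surgery is a contact-$(-1)$ surgery, i.e.\ a Weinstein 2-handle, on the Legendrian dual knot (\cref{lemma:surg_cancel}). Then:
\begin{itemize}
\item Contact-$(-1)$ surgery is admissible transverse surgery on the transverse pushoff, since its framing $\mathrm{tb}-1$ lies below the foliation slope of a standard neighborhood.
\item One takes $\mathcal{T}'$ to be the positive transverse pushoff of the Legendrian core of the final surgery torus.
\item One checks that the dual framing equals that Legendrian's $\mathrm{tb}$ minus a positive integer, which is therefore admissible.
\end{itemize}
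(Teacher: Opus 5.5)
There is a genuine gap: the Legendrian route you rely on to make Steps 3--4 concrete breaks down as soon as $n\ge 2$. Work on $\partial N(\Lambda)$ with meridian $\mu$ and contact framing $\lambda_c$ (slope $0$). The glued-in torus $V$ has meridian $n\mu+\lambda_c$. Its convex tori sweep the Farey arc from $n$ through $n+1$, $\infty$ and the negative slopes to $0$. The slopes on this arc that are framings of the core (distance one from $n$) are only $n+1/b$ for $b\ge 1$, $\infty$, and, when $n=1$, also $0$.

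Hence for $n\ge 2$ every Legendrian core $L\subset V$ satisfies $\mu=\mathrm{tb}(L)+k$ with $k\ge 0$. The case $k=0$ is the maximal core, which is exactly the dual of the last contact-$(-1)$ surgery in the Ding--Geiges--Stipsicz decomposition. For $n=2$, for example, the stabilized pushoff $\Lambda_1$ has contact framing $-\mu$ in $Y_\Lambda((+1))$, the new meridian is $-2\mu-\lambda_c$, and the dual Legendrian has contact framing $\pm\mu$. So the dual framing is never $\mathrm{tb}$ minus a positive integer, and the fact that $\mathrm{tb}-1$ is admissible for a transverse pushoff gives you nothing.

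Your argument does work for $n=1$ via \cref{lemma:surg_cancel}. You cannot reduce to that case in general, though: a large positive framing on the max-sl unknot forces $n$ to be large, since its Legendrian approximations have $\mathrm{tb}\le -1$.

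Your first route does not close the gap either. Step 1 is the whole content of the lemma, and it is only asserted. The model you describe is the admissible one: for inadmissible $\lambda$, no torus $\{r=r_1\}$ with $r_1<R$ carries slope $\lambda$, so there is nothing inside $N$ to remove and collapse. Your heuristic for Step 4 also never uses that $\lambda$ is inadmissible, and that is exactly what places $\mu$ between the new meridian and the outer slope.

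The missing ingredient is that the transverse core of $V$ has standard neighborhoods whose boundary foliation slope lies beyond the contact framing of every Legendrian core. Such a slope can be anywhere on the open arc from $n$ to $0$, in particular past $\infty$. This is what the paper imports from Conway--Etnyre--Tosun, after which admissibility of $\infty$ is immediate.

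You could instead get it from the actual model of inadmissible surgery. Glue to $Y-N$ a collar $T^2\times[0,1]$ on which the characteristic foliation keeps rotating inward, in the direction forced by the contact condition, from the boundary slope $a$ of $N$ until it reaches $\lambda$. Then contact-cut the $\lambda$-leaves of the inner torus. The cut locus is a transverse dual $\mathcal{T}'$ whose standard neighborhood is the whole collar. Because $\lambda$ is inadmissible, the rotation from $\lambda$ out to $a$ passes through $\mu$, so $\mu$ is admissible for $\mathcal{T}'$.
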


\begin{proof}
Let $\Lambda\subset (Y, \xi)$ be a Legendrian pushoff of $\mathcal{T}$ so that the inadmissible transverse surgery along $\mathcal{T}$ is witnessed by the preferred contact-$(+n)$ surgery along $\Lambda$ for some $n\in \N$. Let $N$ be a standard neighborhood of $\Lambda$, and choose a coordinate system on $\partial N$ so that the dividing curves have slope $0$. Then, to perform the contact-$(+n)$ surgery, we remove $N$ and glue in a preferred contact solid torus $N'$ with meridional slope $n$. Topologically, the core of this torus is the dual surgery knot, and the topological dual framing has slope $\infty$ (i.e.\ the slope of the meridian of $N$). It is known (see \cite[\S 2.3]{conway2021disks}, or the first paragraph of the proof of \cite[Theorem 1.13]{conway2021disks}) that in $N'$ one can find a transverse representative of the core together with a standard neighborhood witnessing any characteristic foliation slope counterclockwise of $0$ and clockwise of $n$ on the Farey graph. This implies that $\infty$ is an admissible slope.   
\end{proof}

\section{Legendrian \(R\)-links}\label{sec:leg_r_link}

In this section we study Legendrian $R$-links as in \cref{def:tightlegRlink} and prove \cref{thm:stable_leg_gen_prop_r}. For some preliminary observations, a Legendrian $R$-link is in particular a topological $R$-link. Consequently, $L$ is algebraically unlinked and the topological surgery framing is $0$ along each component \cite[Proposition 2.2]{gompf2010property2R}. It follows that each component $L_i$ of a Legendrian $R$-link has $\mathrm{tb}(L_i) = -1$. As we will see in the proof of \cref{thm:stable_leg_gen_prop_r}, any component of a Legendrian $R$-link is smoothly slice in $B^4$; thus, the slice-Bennequin inequality implies $\overline{\mathrm{tb}}(L_i)\leq -1$. The extent to which \cref{def:tightlegRlink} is restrictive beyond this framing condition is posed in \cref{q:tightRlink}.

The key examples of Legendrian $R$-links are as follows. 

\begin{example}[Max-tb unlinks are Legendrian $R$-links]\label{ex:max-tb_unlink_R_link}
Let $U = U_1 \cup \cdots \cup U_g$ be a $g$-component unlink of max-tb unknots. By \cite{Ding2009HandleMI}, contact-$(+1)$ surgery along $U$ yields tight $\#^g (S^1\times S^2)$, hence $U$ is a Legendrian $R$-link. 
\end{example}

\begin{example}[A non-trivial Legendrian $R$-link]
Consider the link at the top of \cref{fig:2Rlink}, which consists of a $m(9_{46})$ component and an unknot component. This is a Legendrian $R$-link, as can be seen by performing the indicated sequence of Legendrian isotopies and contact-$(+1)$ handleslides, which preserves the result of the contact surgery.    
\end{example}

\begin{figure}[ht]
	\centering
	\begin{overpic}[scale=.4]{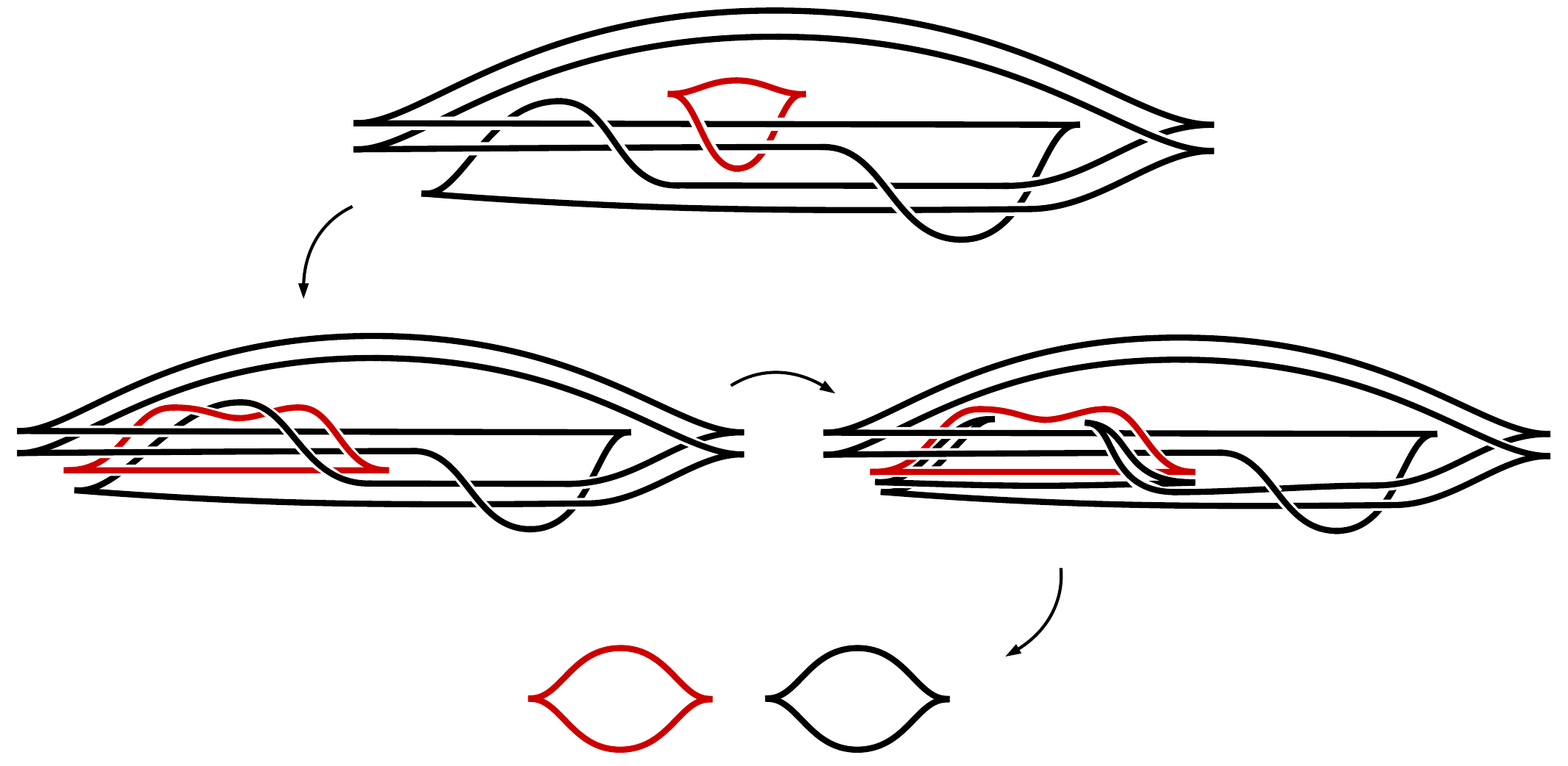}
        \put(21,32.5){\tiny isotopy}
        \put(60.5,10){\tiny isotopy}
        \put(40,26.5){\tiny contact-$(+1)$ handleslide}
	\end{overpic}
	\caption{A nontrivial tight Legendrian $R$-link.}
	\label{fig:2Rlink}
\end{figure}

Now we prove \cref{thm:stable_leg_gen_prop_r}, stating that Legendrian $R$-links are stably contact-$(+1)$ handleslide equivalent to max-tb unlinks. 

\begin{proof}[Proof of \cref{thm:stable_leg_gen_prop_r}.] 
A topological $R$-link gives rise to an "upside down" homotopy $4$-ball by attaching $0$-framed $2$-handles to $[0,\ve]\times S^3$ along the link in $\{\ve\}\times S^3$ and then capping off the resulting $\#^g(S^1\times S^2)$ boundary component with $\natural^g(S^1 \times B^3)$, viewing the latter portion as comprised of $3$-handles and a $4$-handle. We mimic the construction in the "right-side-up" Weinstein setting. 

Let $L\subset (S^3, \xi_{\mathrm{st}})$ be a $g$-component Legendrian $R$-link and denote
\[
(Y, \xi_{Y}) := S^3_{L}((+1)) \cong (\#^{g} (S^1 \times S^2), \,\xi_{\mathrm{st}}). 
\]
There is a Weinstein cobordism $W$ from $Y$ to $S^3$ given by attaching Weinstein $2$-handles along a small contact push-off of $L$ in $S^3_{L}((+1))$; indeed, the contact-$(-1)$ surgeries of the $2$-handles cancel the contact-$(+1)$ surgeries along $L$ by \cref{lemma:surg_cancel}. On the other hand, there is a unique Weinstein filling of $(Y, \xi_{Y})$ given by the standard Weinstein structure on $\natural^{g} (S^1 \times B^3)$ \cite[Theorem 16.9]{cieliebak2012stein}. Concatenating this filling of $Y$ with the cobordism $W$ gives a Weinstein filling of $(S^3, \xi_{\mathrm{st}})$ such that the Legendrian $R$-link is isotopic to the belt spheres of the $2$-handles of the filling.

Weinstein fillings of tight $S^3$ are unique up to deformation equivalence \cite{cieliebak2012stein}, and in particular are deformation equivalent to the standard radial structure on $B^4$. Thus, after a self-diffeomorphism of $B^4$ relative boundary (which does not affect the $R$-link in the boundary $S^3$) we may assume that the Weinstein filling $Y \cup W \cong B^4$ of tight $S^3$ is Weinstein homotopic (allowing for births and deaths of canceling $1$-/$2$-handle pairs) to the standard $0$-handle.

At the level of the initial $R$-link $L$, a $1$-/$2$-pair birth in the Weinstein homotopy is witnessed by adjoining an isolated max-tb unknot component $U$ to $L$. Indeed, this modifies $(Y, \xi_Y)$ to 
\[
(Y', \xi_{Y'}) := (Y, \xi_Y) \,\#\, (S^1 \times S^2, \xi_{\mathrm{st}}) \cong (\#^{g+1} (S^1 \times S^2),\, \xi_{\mathrm{st}})
\]
which is filled by attaching an additional $1$-handle to $\natural^{g} (S^1 \times B^3)$. Likewise, the cobordism $W'$ from $Y'$ to $S^3$ is built by including an additional $2$-handle along a small push-off of $U$, which cancels the $1$-handle corresponding to $U$. 

Now we consider Weinstein handleslides. As the correspondence between Weinstein structures and Legendrian $R$-links lies in the belt spheres of the $2$-handles, it suffices to consider the impact of $2$-handle slides on the corresponding $R$-link. To that end, consider the general situation of two Weinstein handles attached along Legendrian knots $\Lambda_1 \cup \Lambda_2 \subset (M, \xi)$ in a contact $3$-manifold, and let $L_1 \cup L_2 \subset M_{\Lambda_1 \cup \Lambda_2}((-1))$ denote the corresponding belt spheres. If $\Lambda_1' \cup \Lambda_2' \subset M$ is the result of sliding $\Lambda_1$ across the contact-$(-1)$ surgery along $\Lambda_2$, then the resulting belt spheres $L_1' \cup L_2'$ are obtained by sliding $L_2$ across the contact-$(+1)$ framed $L_1$. Moreover, the belt sphere of each handle is isotopic to max-tb unknot components linking once with the attaching sphere \cite[Proposition 2]{Ding2009HandleMI} as in the lower left of the figure. The resulting belt sphere contact-$(+1)$ handleslide equivalence is depicted by the sequence in the rest of the figure. 

\begin{figure}[ht]
	\centering
	\begin{overpic}[scale=.4]{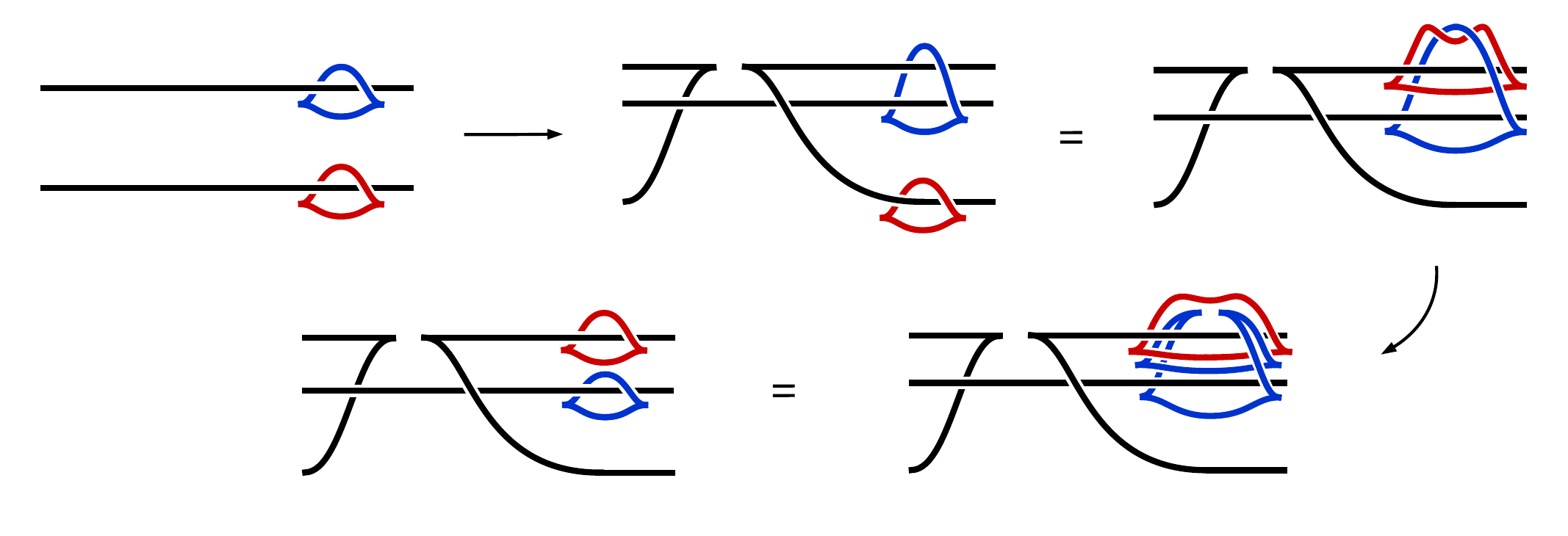}
\put(-0.5,22){\small $\Lambda_1$}
\put(-0.5,28.25){\small $\Lambda_2$}

\put(3.5,23){\tiny $(-1)$}
\put(3.5,29.25){\tiny $(-1)$}

\put(17.5,18.75){\small \textcolor{darkred}{$L_1$}}
\put(17.5,30.5){\small \textcolor{darkblue}{$L_2$}}

\put(16,3.5){\small $\Lambda_1'$}
\put(16,9){\small $\Lambda_2'$}

\put(41.75,21.5){\tiny $(-1)$}
\put(44.5,25.75){\tiny $(-1)$}

\put(75.75,21){\tiny $(-1)$}
\put(78.5,24.75){\tiny $(-1)$}

\put(60,4){\tiny $(-1)$}
\put(62.75,7.75){\tiny $(-1)$}

\put(21.5,3.75){\tiny $(-1)$}
\put(24.25,7.5){\tiny $(-1)$}

\put(40.5,14.75){\small \textcolor{darkred}{$L_1'$}}
\put(41.5,6.5){\small \textcolor{darkblue}{$L_2'$}}
        
        \put(92,12.5){\tiny contact-\textcolor{darkred}{$(+1)$}}
        \put(92.25,10.5){\tiny handleslide}
	\end{overpic}
	\caption{The handleslides in the proof of \cref{thm:stable_leg_gen_prop_r}.}
	\label{fig:slidesbelt}
\end{figure}

In such a Weinstein homotopy from our initial filling of $S^3$ to the radial filling, we may delay all $1$-/$2$-pair cancellations until the end, resulting in a geometrically canceling Weinstein structure. The corresponding Legendrian $R$-link is a max-tb unlink. By the above remarks on $1$-/$2$-pair birth and $2$-handleslides, the initial Legendrian $R$-link is stably equivalent to a max-tb unlink. 
\end{proof}

\begin{remark}[Ribbon, handle-ribbon, and stable generalized property $R$]\label{remark:SGPRC}
The stable generalized property $R$ Conjecture \cite{meier2016classification} (abbreviated SGPRC) posits that every $R$-link is stably equivalent to an unlink, where stable equivalence is accordingly defined by split unlink inclusion and $0$-framed handleslides. If SGPRC is true, then handle-ribbonness is equivalent to ribbonness: first, the unknot is ribbon, and second, $0$-framed handleslides of ribbon knots preserve ribbonness by viewing the result of a handleslide as a small band sum between the two ribbon knots.  Moreover, the SGPRC implies that every homotopy 4-ball with $S^3$ boundary and built without 3-handles is diffeomorphic to the standard 4-ball (which is equivalent to the smooth 4-dimensional Poincar\'e conjecture for geometrically simply-connected 4-manifolds).
\end{remark}

\begin{remark}[Decomposability, regularity, and stable Legendrian generalized property $R$]\label{remark:lSGPRC}
In contrast, it is not clear if the Legendrian version of SGPRC (\cref{thm:stable_leg_gen_prop_r}) implies equivalence of regular sliceness and decomposable sliceness. Specifically, while the max-tb unknot is decomposably slice, we do not know if contact-$(+1)$ handleslides preserve decomposable sliceness. The issue is in the type of band sum performed when doing a contact-$(+1)$ handleslide, which is not compatible with the usual formulation of decomposable cobordisms in the front projection. Additional discussion on these band sums is provided in the next section. Also in contrast to smooth setting, symplectic homotopy 4-balls with convex $S^3$ boundary are known to be standard by \cite{gromov1985pseudo}; in fact, this is a key component in our proof of \cref{thm:stable_leg_gen_prop_r}.
\end{remark}

\section{Legendrian band sums}\label{sec:bandsum}

Here we identify and discuss a key local model for Legendrian band sums (\cref{lemma:div_curve_h_slide}), which will be used in the subsequent section to prove \cref{thm:legKauffman}. It has been noted \cite{etnyre2001knotscontact,Ding2009HandleMI,casals2024steintrace} that there are multiple models in the front projection for performing a Legendrian band sum. For the connected sum of split links, the different models give Legendrian isotopic results \cite{Ding2009HandleMI}, but it is unclear to what extent different models are equivalent in general. The distinction is important when comparing decomposability and regularity, and we introduce some convenient referential language. 

\begin{figure}[ht]
	\centering
	\begin{overpic}[scale=.28]{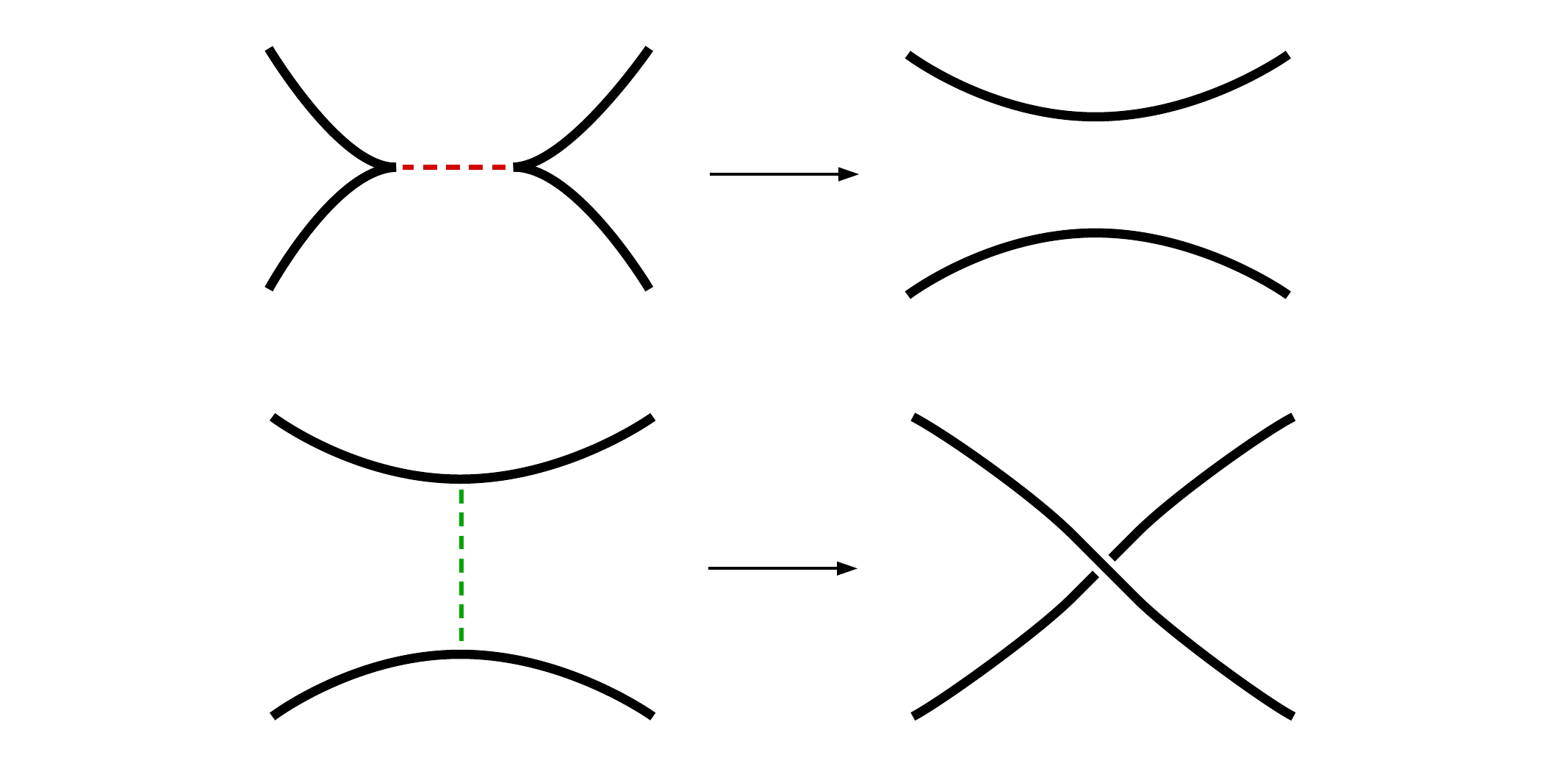}
       \put(40,14.5){\footnotesize Type-$R$ band}
       \put(41,39.5){\footnotesize Type-$D$ band}
	\end{overpic}
	\caption{Type-$D$ vs. Type-$R$ bands in the front projection.}
	\label{fig:typeDR}
\end{figure}

We call the top and bottom front projection models in \cref{fig:typeDR} band attachments, or moves, of \emph{Type-$D$} and \emph{Type-$R$}, respectively. A Type-$D$ band has Legendrian core, creates a new contractible Reeb chord, and induces a decomposable (hence the nomenclature) Lagrangian $1$-handle cobordism. On the other hand, a Type-$R$ band is one with a Reeb chord as core. The name is chosen to be reminiscent of "Reeb," "regular," and "$R$-link," settings in which Type-$R$ bands naturally appear in the present article.

\begin{proposition}
Every Type-$D$ band is locally induced by a Type-$R$ band. However, not every Type-$R$ band can be recovered by a sequence of Legendrian isotopies and a Type-$D$ band.     
\end{proposition}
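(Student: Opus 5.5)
The proof has two parts: a local front-projection computation for the first claim, and a global invariant for the second.

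For the first claim, I will work entirely in the local front model of \cref{fig:typeDR}. A Type-$D$ band joins two parallel horizontal strands by a short Legendrian core. After a Legendrian isotopy, this creates a pair of cusps facing each other, with a contractible Reeb chord between them. The plan is to exhibit an explicit Legendrian isotopy, supported in a ball, that moves the two strands before the band into a position where a Type-$R$ band applies. Concretely, I will perform a Reidemeister~I-type move on one strand to create a small zigzag pair of cusps, producing a short Reeb chord between the two strands. The core of this chord is then used as a Type-$R$ band. Finally, I will check that the front after the Type-$R$ move is Legendrian isotopic, by Reidemeister~II/III moves in the ball, to the front after the Type-$D$ move. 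This is a finite picture calculation. The only care needed is that the cusps introduced are undone, so that $\mathrm{tb}$ and $\mathrm{rot}$ agree on both sides; a local count of cusps and crossings confirms this.

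For the second claim, I need a Type-$R$ band whose result is not obtainable from the initial link by isotopy and a Type-$D$ band. I will use an invariant that Type-$D$ bands cannot reproduce. A Type-$D$ band induces an exact Lagrangian $1$-handle cobordism from $\Lambda_-$ to $\Lambda_+$. Any band sum of a max-tb unknot with something disjoint is Legendrian isotopic to a connected sum, by \cite{Ding2009HandleMI}, so a split example will not work. Instead I will look for a Type-$R$ band whose result violates a known obstruction to being a decomposable cobordism output.

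The simplest candidate takes $\Lambda_-$ to be a two-component max-tb Hopf-type or unlink configuration, and attaches a Type-$R$ band along a Reeb chord so that the result $\Lambda_+$ has $\mathrm{tb}$ too large or too small relative to what a Lagrangian $1$-handle allows. For an orientable Lagrangian cobordism, $\mathrm{tb}(\Lambda_+)-\mathrm{tb}(\Lambda_-)=-\chi$. A Type-$D$ band from a two-component link to a knot therefore forces $\mathrm{tb}(\Lambda_+)=\mathrm{tb}(\Lambda_-)+1$, counting $\mathrm{tb}$ of a link as the sum over components plus twice the linking. A Type-$R$ band along a chord instead introduces an extra cusp pair and changes $\mathrm{tb}$ by a different amount. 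I would therefore compute, in the local model, that a Type-$R$ band changes the total $\mathrm{tb}$ by $-1$ rather than $+1$. Equivalently, I would take the Type-$R$ sum of two max-tb unknots with linking zero, whose result is a $\mathrm{tb}=-3$ unknot (or similar), whereas any Type-$D$ result would have $\mathrm{tb}=-1$. Since the Type-$D$ output's $\mathrm{tb}$ is forced by the cobordism formula and Legendrian isotopy preserves $\mathrm{tb}$, this shows the Type-$R$ result is not recoverable.

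The main obstacle is getting the local $\mathrm{tb}$ bookkeeping exactly right in the first claim's inducing relation. If the first claim realizes a Type-$D$ band via a Type-$R$ band after an isotopy that stabilizes, the $\mathrm{tb}$ invariant may fail to distinguish the two in general. In that case I would fall back on a rotation-number or Chekanov–Eliashberg augmentation obstruction: Type-$D$ bands preserve the existence of augmentations from $\Lambda_-$ to $\Lambda_+$, since decomposable cobordisms induce DGA maps. I would then exhibit a Type-$R$ band producing a stabilized knot, which has no augmentation, from an augmentable link.
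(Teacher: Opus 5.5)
Your plan for the first claim is the same as the paper's: an explicit local front computation, which is \cref{fig:DfromR}. Note, though, that you have the Type-$D$ model reversed. A Type-$D$ band joins two cusps facing each other along a short horizontal Legendrian core, and its output is the pair of parallel strands with a new contractible Reeb chord.

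The second claim has a genuine gap, because the $\mathrm{tb}$ count it rests on is false. The Type-$R$ model (\cref{fig:typeDR}, made explicit in the proof of \cref{lemma:div_curve_h_slide}) replaces an upper $\cup$ and a lower $\cap$, joined by a Reeb chord, with two crossing strands $z=\pm x$. No cusp pair is introduced; exactly one crossing appears. For the band to be orientable, the two strands must run in the same horizontal direction, and a front crossing between such strands is positive. So an orientable Type-$R$ band raises the total $\mathrm{tb}$ (sum over components plus twice the linking) by exactly $+1$ and leaves $\mathrm{rot}$ unchanged, just like a Type-$D$ band. Your own first claim forces this: the change is determined by the fixed local model, and Legendrian isotopy preserves $\mathrm{tb}$. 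In particular, the Type-$R$ sum of two max-tb unknots with zero linking has $\mathrm{tb}=-1$, not $-3$; for a split pair it is the Ding--Geiges connected sum, which is the max-tb unknot. Classical $\mathrm{tb}$/$\mathrm{rot}$ bookkeeping therefore cannot separate an orientable Type-$R$ band from a Type-$D$ band, and your main argument does not get started.

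Your augmentation fallback is a legitimate kind of obstruction. Isotopies and Type-$D$ bands, orientable or not, induce unital DGA maps over $\Z/2$ from the DGA of $\Lambda_+$ to that of $\Lambda_-$, so they pull augmentations back. But you never exhibit the band, and nothing in your $\mathrm{tb}$ reasoning points to one. One example uses a non-orientable band:
\begin{itemize}
\item Wiggle the standard front of the max-tb unknot so that its middle Reeb chord has the model $\cup$-over-$\cap$ shape.
\item Attach a Type-$R$ band along that chord.
\item The output is the figure-eight front with two cusps and one negative crossing. This is the stabilized unknot with $\mathrm{tb}=-2$, which has no augmentation, while the max-tb unknot does.
\end{itemize}

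The paper argues differently. In \cref{fig:nonorientable}, a Type-$R$ band sits inside an otherwise decomposable sequence that builds a non-orientable surface bounded by the max-tb unknot. Replacing that band by isotopies and a Type-$D$ band would produce a non-orientable exact Lagrangian filling of the max-tb unknot, which \cite{chen2024nonorientable} rules out.
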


\begin{proof}
The proof that every Type-$D$ move can be executed with a Type-$R$ is move is contained in \cref{fig:DfromR}. To show that the converse does not hold, consider the sequence of diagrams in \cref{fig:nonorientable}. Assume for the sake of contradiction that the Type-$R$ move in the middle row can be replaced by a sequence of Legendrian isotopies and and a Type-$D$ move. The modified sequence would then construct a non-orientable decomposable filling of the max-tb unknot. However, the max-tb unknot does not admit any non-orientable exact Lagrangian fillings \cite{chen2024nonorientable}. 
\end{proof}

\begin{figure}[ht]
	\centering
	\begin{overpic}[scale=.35]{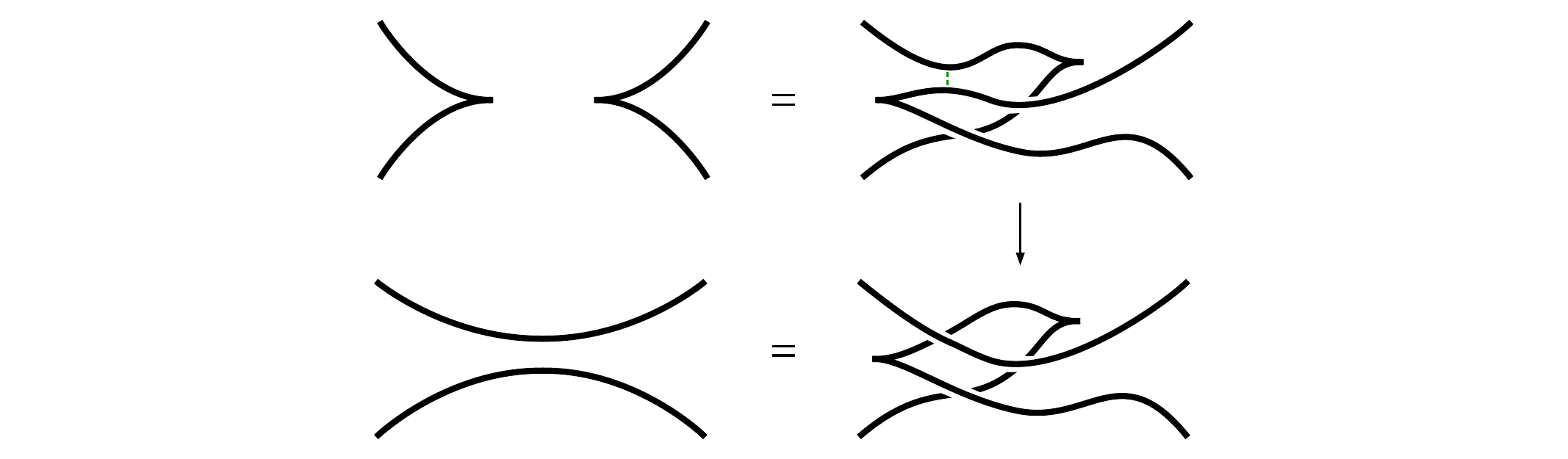}
       \put(66,14){\footnotesize Type-$R$ band}
       
	\end{overpic}
	\caption{Locally executing a Type-$D$ band via a Type-$R$ band.}
	\label{fig:DfromR}
\end{figure}

\begin{figure}[ht]
	\centering
	\begin{overpic}[scale=.26]{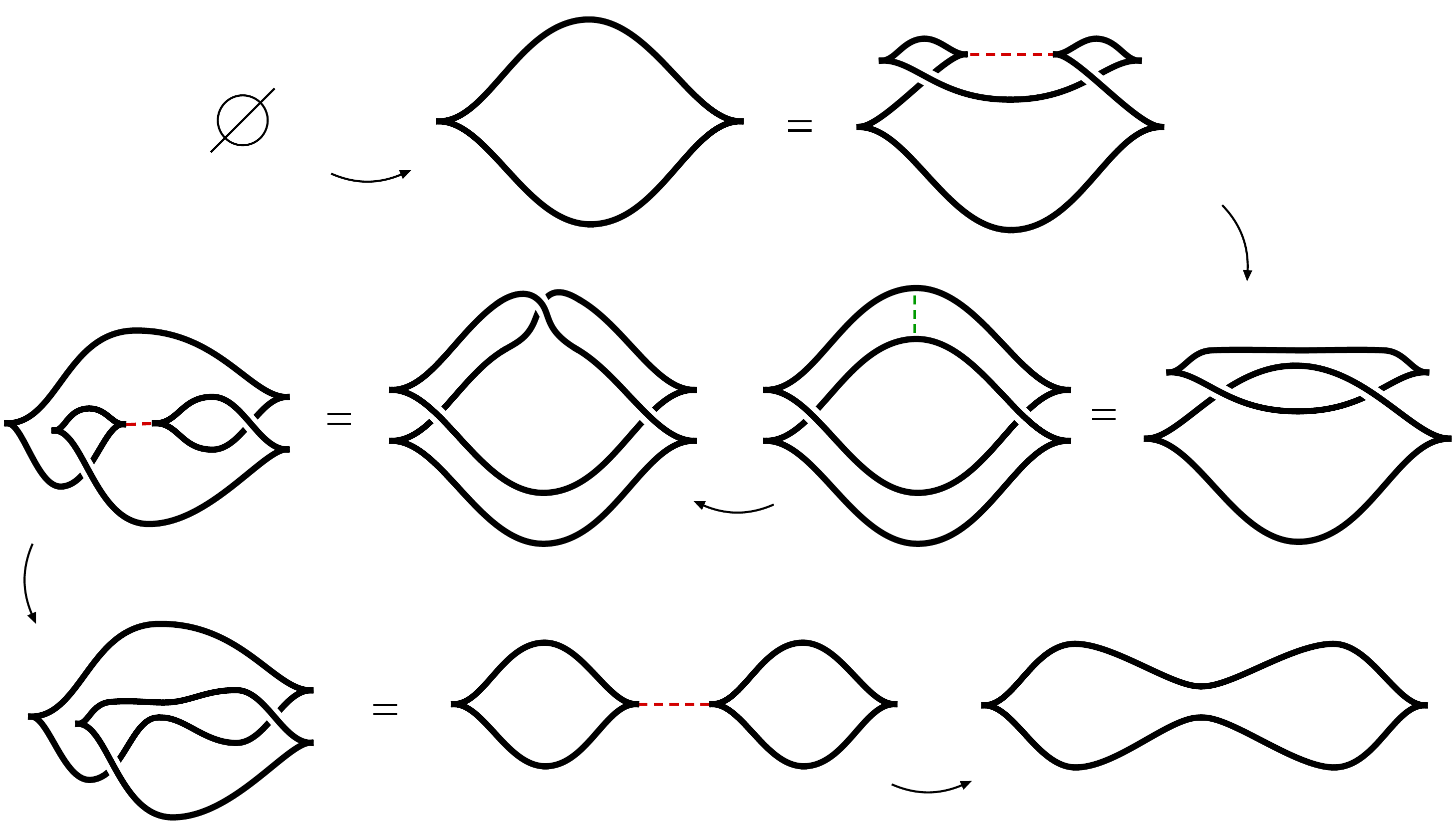}
    
	\end{overpic}
	\caption{A Type-$R$ band that cannot be recovered from a Type-$D$ band.}
	\label{fig:nonorientable}
\end{figure}

Informally, the following lemma says that band sums and handleslides performed along dividing curve arcs are naturally of Type-$R$. 

\begin{lemma}\label{lemma:div_curve_h_slide}
Let $\Sigma$ be a convex surface with distinct boundary components $L_{\pm}$ and a dividing curve arc $\Gamma$ connecting $L_-$ to $L_+$. Then the band sum of $L_-, L_+$ along $\Gamma$, framed by $\Sigma$, is given by a Type-$R$ band sum. In particular, if $L_-$ is $0$-framed by $\Sigma$, then the resulting band sum is Legendrian isotopic to a contact-$(+1)$ handleslide of $L_+$ over $L_-$. 
\end{lemma}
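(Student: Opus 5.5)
The proof is local, so the plan is to build an explicit model near the band and read off its front projection.

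First, use \cref{lemma:nbd_size} to fix a vertically invariant neighborhood $\Sigma\times\R_t$ with contact form $\alpha_{\mathrm{inv}} = f\,dt+\beta$, where $\Gamma=\{f=0\}$. Restrict attention to a small rectangle $B\subset\Sigma$ that is a thickening of the dividing arc $\Gamma$ and meets $L_-$ and $L_+$ in its two short edges. By Giroux flexibility (together with the standard neighborhood of Legendrian boundary from \cref{thm:legendrian_bdry_convex}, where dividing curves meet the boundary transversely), the contact germ near $B$ is determined by the single dividing arc running from one short edge to the other. So one explicit model computation suffices.

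Second, choose coordinates $(x,y,t)$ on $B\times\R$ with $B=[-1,1]_x\times[-\delta,\delta]_y$, $\Gamma=\{y=0\}$, and the short edges $\{x=\pm1\}$ lying on $L_\mp$. Take $\alpha = y\,dt + x\,dy$ or a similar standard form, adjusted so that the edges $\{x=\pm1\}$ are Legendrian. The contact vector field $\partial_t$ lies in $\xi$ exactly along $\Gamma$. Since $\Gamma$ is a transverse arc, there is a Reeb vector field that is a small perturbation of $\partial_t$ near $\Gamma$. Under a contactomorphism to a standard Darboux chart with $\partial_t$ playing the role of $\partial_z$, the arc $\Gamma$ becomes a short Reeb-chord-like arc joining a strand of $L_-$ to a strand of $L_+$. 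The band $B$, framed by $\Sigma$, is then a thin ribbon around this chord.

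Third, identify the boundary $\partial(\Sigma\text{-band sum})$ in the front projection. Replace the two short edges by the two long edges $\{y=\pm\delta\}$, made Legendrian by the Legendrian realization principle (\cref{thm:LeRP}). This is non-isolating because the dividing arc crosses them. Then check that the resulting front matches the Type-$R$ picture in \cref{fig:typeDR}. This comparison is the main obstacle: one has to verify that the framing induced by $\Sigma$ agrees with the twisting of the Type-$R$ band and not of the Type-$D$ band. I would first try to settle it by counting dividing-curve intersections via \cref{thm:legendrian_bdry_convex}. The rectangle's boundary meets $\Gamma$ exactly twice, so the Legendrian boundary of $B$ has $\mathrm{tb}=-1$ relative to $B$. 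This is consistent with the standard $\mathrm{tb}=-1$ disk surrounding a Reeb chord, whereas a band with Legendrian core would produce a rectangle with a different dividing pattern, with no arc running between the short edges.

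Finally, for the handleslide statement, compare with the front model of \cref{fig:slidesplus1}. A contact-$(+1)$ handleslide of $L_+$ over $L_-$ band-sums $L_+$ with a Legendrian pushoff of $L_-$ along a short Reeb chord. If $L_-$ is $0$-framed by $\Sigma$, meaning $\mathrm{tb}(L_-)$ equals the $\Sigma$-framing, then by \cref{thm:legendrian_bdry_convex} the $\Sigma$-pushoff of $L_-$ is Legendrian isotopic to its contact (Reeb) pushoff. The band sum along $\Gamma$ is then exactly the right-hand model of \cref{fig:slidesplus1}, up to Legendrian isotopy.
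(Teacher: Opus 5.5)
\textbf{There is a genuine gap: the front is never identified.} Your outline matches the paper's: invariant neighborhood, Giroux flexibility, Legendrian realization of the long edges, then comparison of fronts. You correctly note that one explicit model computation suffices, but that computation is never carried out, and what you offer in its place is incorrect.

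\emph{The model form is wrong.} The form $\alpha = y\,dt + x\,dy$ has $\alpha\wedge d\alpha = y\,dx\wedge dy\wedge dt$, so it is not contact along $\Gamma=\{y=0\}$. Also $\alpha(\partial_y)=x$, so the short edges are not Legendrian.

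\emph{$\partial_t$ cannot play the role of $\partial_z$.} As you note, $\partial_t$ lies in $\xi$ along $\Gamma$. The Reeb field $\partial_z$ of $dz-y\,dx$ is nowhere in $\xi$, and a contactomorphism preserves the locus where a vector field is tangent to $\xi$. What is true is that $\Gamma$ itself is transverse to $\xi$ and becomes a Reeb chord, while $\partial_t$ becomes the horizontal contact field $\partial_x$.

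\emph{The $\mathrm{tb}=-1$ count proves nothing new.} The long edges are parallel to $\Gamma$, so $\partial B$ meets the dividing set twice by construction. Ruling out a Legendrian core does not show the front is the Type-$R$ picture, since these two options are not exhaustive. Relatedly, the long edges do not cross $\Gamma$. Their non-isolation comes from a second dividing arc outside the strip, which is forced by the parity of $|\Gamma_\Sigma\cap L_\pm|$.

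\emph{What is needed is a concrete model.} The paper uses $\alpha=-q\,dt+(q^2-1)\,dp$ on $[-1,1]_p\times[-2,2]_q\times\R_t$. It is contact since $f\,d\beta+\beta\wedge df=(1+q^2)\,dp\wedge dq$, its dividing set is $\{q=0\}$, and all four edges $p=\pm1$, $q=\pm1$ are Legendrian. The map $\varphi(p,q,t)=(t-2pq,\,q,\,-p(q^2+1))$ satisfies $\varphi^*(dz-y\,dx)=\alpha$. It sends $L_-$ and $L_+$ to the fronts $z=x^2/4+1$ and $z=-(x^2/4+1)$, sends $\Gamma$ to the vertical Reeb chord over $x=0$, and sends the long edges to the crossing lines $z=\pm x$. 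That is exactly the Type-$R$ X. Alternatively, you could start from the Type-$R$ front, show that its band rectangle is convex with a single dividing arc joining the two short edges, and invoke Giroux flexibility. Either way, this verification is the whole content of the lemma.

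\textbf{The handleslide paragraph misreads the hypothesis.} In the paper, ``$0$-framed by $\Sigma$'' means that $\Sigma$ induces the Seifert framing. Your reading, that $\mathrm{tb}(L_-)$ equals the $\Sigma$-framing $f$, can never hold here. Since $\Gamma$ ends on $L_-$, we have $|\Gamma_\Sigma\cap L_-|\ge 2$, so $\mathrm{tb}(L_-)\le f-1$ by \cref{thm:legendrian_bdry_convex}. Consistently, the $\Sigma$-pushoff of $L_-$ links $L_-$ strictly more than $\mathrm{tb}(L_-)$ times, so it is never the Reeb pushoff in the complement of $L_-$.

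The point of the hypothesis is different. With $\mathrm{tb}(L_-)=-1$, as in the application, it makes the $\Sigma$-framing equal to the contact-$(+1)$ surgery framing $\mathrm{tb}+1$, so the $\Sigma$-framed band sum is a handleslide. The Legendrian identification then comes from the local Type-$R$ front, which is the standard model of a contact-$(+1)$ handleslide. So this part also reduces to the missing computation above.
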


\begin{proof}
Let $N\subset \Sigma$ be a small tubular neighborhood of $\Gamma$ in $\Sigma$. Endow $N\cong [-1,1]_p \times [-2,2]_q$ with local coordinates $(p,q)$ so that $\Gamma = [-1,1]_p\times\{0\}$ and $L_{\pm} \cap N = \{\pm 1\}\times [-2,2]_q$. Define arcs $\ell_{\pm 1}\subset N$ as $\ell_{\pm 1} = [-1,1]_p\times \{\pm 1\}$; see the left side of \cref{fig:model}.

\begin{figure}[ht]
	\centering
    \vskip-0.8cm
	\begin{overpic}[scale=.38]{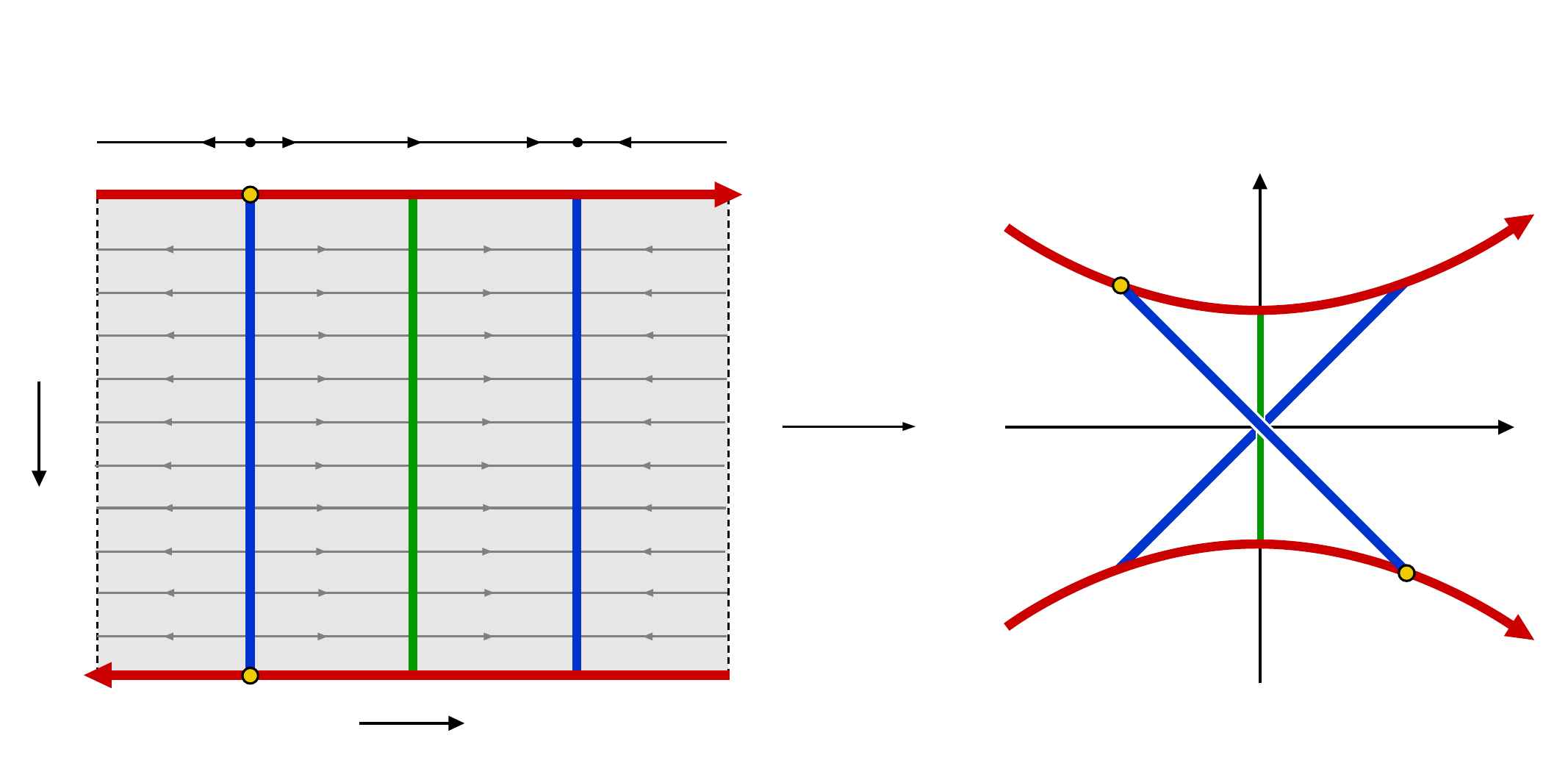}
        \put(2.75,36){\small \textcolor{darkred}{$L_-$}}
        \put(63,36){\small \textcolor{darkred}{$L_-$}}
        \put(62.5,6){\small \textcolor{darkred}{$L_{+}$}}
        \put(47.5,4.5){\small \textcolor{darkred}{$L_{+}$}}
        \put(2,16){\small $p$}
        \put(30,2){\small $q$}
        \put(53,23){\small $\varphi$}
        \put(97,21){\small $x$}
        \put(79.75,38.5){\small $z$}
        \put(15,3){\small \textcolor{darkblue}{$\ell_{-1}$}}
        \put(36,3){\small \textcolor{darkblue}{$\ell_{1}$}}
        \put(69,14){\small \textcolor{darkblue}{$\ell_{1}$}}
        \put(68.5,28){\small \textcolor{darkblue}{$\ell_{-1}$}}
	\end{overpic}
	\caption{}
	\label{fig:model}
\end{figure}

Note that $\ell_{-1} \cup \ell_{1}$ is non-isolating with respect to the dividing set of $\Sigma$; indeed, there must be some other arc of the dividing set present by Thurston-Bennequin considerations. By the Legendrian realization principle, we may assume after an isotopy of $\Sigma$ that $\ell_{-1} \cup \ell_{1}$ is Legendrian. In particular, Giroux flexibility allows us to witness the characteristic foliation in $N$ indicated in gray (and projected in black above $N$) in \cref{fig:model}, which is printed on $N =\{t=0\}$ by the $\R$-invariant contact structure 
\[
\left([-1,1]_p \times [-2,2]_q \times \R_t, \, \alpha = -q\, dt + (q^2 - 1)\, dp\right).
\]
Now we identify a strict contactomorphism of this invariant neighborhood $N\times \R$ with a neighborhood of the origin in a Darboux chart $(\R^3, dz-y\, dx)$ in order to view the Legendrian complex $L_-\cup L_{+}\cup \ell_{\pm 1}$ in the front projection. One may verify that the map $\varphi: N\times \R \to \R^3_{(x,y,z)}$ given by 
\[
\varphi(p,q,t) = (t-2pq, \, q, \, -p(q^2+1))
\]
satisfies $\varphi^*(dz-y\, dx) = \alpha$, hence is the desired contactomorphism. 

Next, we identify the image of the Legendrian complex $L_-\cup L_{+}\cup \ell_{\pm 1}$ under $\varphi$. Letting $L_-$ denote the upper arc on the left of \cref{fig:model}, we may parametrize it by $\tau \mapsto (p=-1, q=\tau,t=0)$. Composing with $\varphi$ gives the parametrization 
\[
L_-: \quad \tau \mapsto (x= 2\tau,y=\tau,z=\tau^2 + 1).
\]
The resulting front projection to the $(x,z)$-plane is the given by the arc on the right of \cref{fig:model}. One may additionally verify the following front projection parametrizations: 
\begin{align*}
    L_{+}:&\quad \tau \mapsto (2\tau,\tau,-(\tau^2 + 1))\\
    \ell_{-1}:&\quad \tau \mapsto (2\tau,-1,-2\tau)\\
    \ell_{1}:&\quad \tau \mapsto (2\tau,1,2\tau)\\
    \Gamma:&\quad \tau \mapsto (0,0,-\tau)
\end{align*}
which produce the right side of \cref{fig:model}. This corresponds to a Type-$R$ band and the standard model for a contact-$(+1)$ handleslide. 
\end{proof}

\section{Legendrian derivative links}\label{sec:reg_char}

In this section we prove the derivative characterizations \cref{thm:legKauffman} and \cref{thm:decKauffman}.

\subsection{Derivative characterization of regular sliceness}\label{subsec:reg_char}

We begin with \cref{thm:legKauffman}. The proof is a contact topological adaptation of the proof of \cite[Theorem 1.3]{miller2023handle}, appropriately enriched with convex surface theory.

\begin{definition}\label{def:standard_planar}
Let $\Sigma\subset (Y, \xi)$ be a connected planar surface. We call $\Sigma$ a \emph{standard convex planar surface} if $\partial \Sigma$ is Legendrian, $\Sigma$ is convex, $\Gamma$ is the union of $|\partial \Sigma|$ arcs, with each boundary component meeting exactly two arcs, and $\Gamma$ cuts $\Sigma$ into two disks, as shown in \cref{fig:standard}.
\end{definition}

\begin{figure}[ht]
	\centering
	\begin{overpic}[scale=.23]{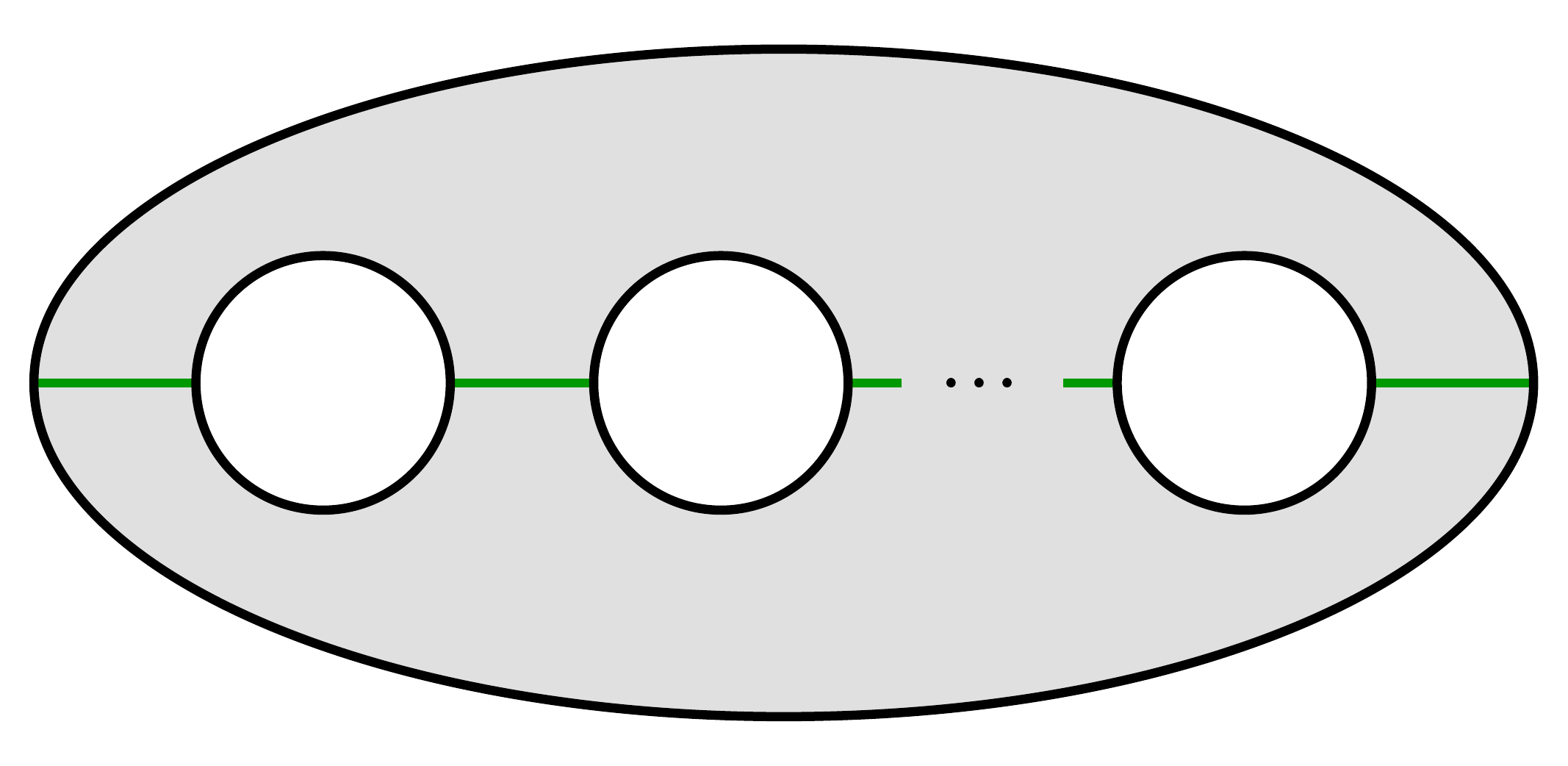}
        \put(8,39.5){$\Sigma$}
        \put(-2,23.5){\small \textcolor{darkgreen}{$\Gamma$}}
	\end{overpic}
	\caption{A standard convex planar surface with dividing curves $\Gamma$.}
	\label{fig:standard}
\end{figure}

\begin{lemma}\label{lemma:planar_surface_bounds_R_link_new}
Let $\Sigma\subset (S^3, \xi_{\mathrm{st}})$ be a connected convex planar surface with Legendrian boundary inducing the $0$-framing of each boundary component. Assume $\mathrm{tb}=-1$ for each component of $\partial \Sigma$. If contact-$(+1)$ surgery along $\partial \Sigma$ is tight, then $\Sigma$ is a standard convex planar surface.    
\end{lemma}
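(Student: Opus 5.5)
Since $\partial\Sigma$ induces the $0$-framing and $\mathrm{tb}=-1$ on each component $L_i$ ($i=1,\dots,g$), \cref{thm:legendrian_bdry_convex} gives $-1 = 0 - \tfrac12|\Gamma\cap L_i|$. So each boundary component meets $\Gamma$ in exactly two points, and $\Gamma$ consists of exactly $g$ properly embedded arcs together with some (possibly empty) collection of closed curves. The plan is to cap $\Sigma$ off to a convex sphere in the surgered manifold $(Y,\xi_Y):=S^3_{\partial\Sigma}((+1))$. Then tightness, via the Giroux criterion (\cref{thm:criterion}), forces the dividing set of that sphere to be a single circle. Finally I read off the structure of $\Gamma$ from this.

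\emph{Step 1: the caps.} Contact-$(+1)$ surgery on $L_i$ is topological $\mathrm{tb}(L_i)+1=0$ surgery. Relative to the Seifert framing coming from $\Sigma$, the new meridian is therefore isotopic on $\partial N(L_i)$ to $L_i$'s $\Sigma$-pushoff. Hence $\Sigma$ (shrunk slightly to $\Sigma\cap (S^3 - \bigcup N(L_i))$) caps off with $g$ meridian disks $D_i$ of the surgery tori to give an embedded sphere $S\subset Y$. In coordinates with dividing slope $0$ on $\partial N(L_i)$, the new meridian has slope $1$ and meets the two dividing curves of $\partial N(L_i)$ twice. Consequently $D_i$ can be taken convex with Legendrian boundary and carries exactly one dividing arc (as in the discussion of contact-$(+n)$ surgery with $n=1$). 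Equivalently, and more cleanly, I would work with $\Sigma$ itself and the annuli $\Sigma\cap N(L_i)$ replaced by $D_i$, using the standard neighborhood of $L_i$ so that the corner along $\partial D_i$ is Legendrian. Edge rounding \cite[Lemma 3.11]{honda2000classification} then produces a convex sphere $S$. Its dividing set is obtained from $\Gamma$ by joining, at each $L_i$, the two endpoints of $\Gamma$ on $L_i$ through the single arc $\gamma_i\subset D_i$ (with the "up and to the left'' shift, which does not change the combinatorics of which endpoints are joined).

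\emph{Step 2: Giroux criterion.} $(Y,\xi_Y)$ is tight by hypothesis, so $S$ has a tight neighborhood, and \cref{thm:criterion} says $\Gamma_S$ is connected. Any closed component of $\Gamma$ would survive as a separate component of $\Gamma_S$, so $\Gamma$ has no closed components. Moreover, the $g$ arcs of $\Gamma$ together with the $g$ cap arcs $\gamma_i$ must close up into one circle $C$. In particular, when $g\ge2$ no arc of $\Gamma$ has both endpoints on the same $L_i$ (otherwise that arc plus $\gamma_i$ would be its own component). Hence each boundary component meets exactly two distinct arcs of $\Gamma$.

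\emph{Step 3: complementary regions.} $C$ separates $S$ into two disks $R_\pm$. Since $\Sigma$ is $S$ minus the $g$ open caps $D_i$, and each $D_i$ meets $C$ in the single arc $\gamma_i$, removing $D_i$ takes a half-disk bite out of each of $R_+$ and $R_-$ along the boundary. This does not change the topology, so $\Sigma - \Gamma = (R_+\cap\Sigma)\sqcup(R_-\cap\Sigma)$ is two disks, as in \cref{def:standard_planar}. The step I expect to need the most care is Step 1: checking that the meridian disk of the $(+1)$-surgery torus really carries a single dividing arc, and that edge-rounding connects the endpoints of $\Gamma$ on $L_i$ through it, rather than introducing an extra component. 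I would verify this in the explicit model of $N(L_i)$ with slope-$1$ meridian.
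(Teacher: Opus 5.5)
Your proposal is correct and follows the paper's proof. Both cap off $\Sigma$ (minus a collar of its boundary) with the meridian disks of the contact-$(+1)$ surgery tori, each carrying a single dividing arc, and then apply Giroux's criterion in the tight surgered manifold to force a single dividing circle on the resulting convex sphere; your Steps 2--3 just spell out the combinatorial deduction (no closed dividing curves, distinct arcs at each boundary component, two disk regions) that the paper compresses into one sentence.
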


\begin{proof}
Let $L = \partial \Sigma$ and $L' = \Sigma \cap \partial N(L)$, where $N(L)$ is a small tubular neighborhood of $L$, so that $L'$ is a $0$-framed push-off of $L$. Since $L$ is Legendrian and $\Gamma_{\Sigma}\pitchfork L$, if $N(L)$ is sufficiently small we may assume by the Legendrian realization principle that $L'$ is Legendrian and Legendrian isotopic to $L$. 

Perform contact-$(+1)$ surgery, which induces smooth $0$-surgery, along $L$ via $N(L)$. By assumption, the resulting contact manifold is tight. Each component $L_i'$ of the $0$-framed push-off $L'$ bounds a meridian disk in the corresponding surgery torus, and has $\mathrm{tb}(L_i') = -1$. By tightness and Giroux's criterion each meridian disk may be taken to be convex with a single dividing curve. Using these disks, which agree with the $0$-framing induced on $L'$ by $\Sigma$, we cap off the surface $\Sigma - N(L)$ in the surgered manifold to obtain a convex sphere $S^2$.

Again appealing to tightness and Giroux's criterion, the resulting convex sphere $S^2$ has a single equatorial dividing curve. Moreover, $\Sigma$ is obtained by deleting the meridian disks, each of which has a single dividing curve arc. It follows that $\Sigma$ is a standard convex planar surface. 
\end{proof}

\begin{lemma}\label{lemma:reg_slice_R_link_component}
A Legendrian knot $\Lambda \subset (S^3, \xi_{\mathrm{st}})$ is regularly slice if and only if it is a component of some Legendrian $R$-link.     
\end{lemma}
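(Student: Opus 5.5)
Both directions run through the Weinstein picture used in the proof of \cref{thm:stable_leg_gen_prop_r}, where an $R$-link appears as the set of belt spheres of the $2$-handles of a Weinstein filling of $(S^3,\xi_{\mathrm{st}})$.

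\emph{($R$-link component $\Rightarrow$ regularly slice).} Let $L = L_1\cup\cdots\cup L_g$ be a Legendrian $R$-link with $\Lambda = L_1$. The proof of \cref{thm:stable_leg_gen_prop_r} builds a Weinstein filling $\natural^g(S^1\times B^3)\cup W$ of $(S^3,\xi_{\mathrm{st}})$. In this filling the components of $L$ are the belt spheres of the $2$-handles of $W$, attached along contact push-offs of $L$ in $S^3_L((+1))$. The co-core of the $2$-handle with belt sphere $L_1$ is a Lagrangian disk bounded by $\Lambda$. It is regular, since the Liouville field of a standard Weinstein $2$-handle is tangent to the co-core. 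More precisely, the co-core is the unstable manifold of the index-$2$ critical point, so $X_\lambda$ is tangent to it and it is a properly embedded exact Lagrangian disk. By uniqueness of Weinstein fillings of the tight $S^3$, this filling is deformation equivalent to the radial $(B^4,\lambda_{\mathrm{st}},\phi_{\mathrm{st}})$. Pulling the disk back along that deformation gives a Lagrangian disk in $B^4$ that stays Lagrangian throughout a Weinstein homotopy ending at a structure whose Liouville field is tangent to it. This is exactly the definition of regularity, so $\Lambda$ is regularly Lagrangian slice. I will need to check that regularity is invariant under Weinstein homotopy and deformation equivalence rel boundary. I expect this to follow directly from the definition, since that definition already allows a homotopy.

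\emph{(Regularly slice $\Rightarrow$ $R$-link component).} Let $D\subset B^4$ be a regular Lagrangian disk with $\partial D = \Lambda$. By regularity, after a Weinstein homotopy we may assume $X_\lambda$ is tangent to $D$. Then $D$ is a union of flow lines of a gradient-like field, so after a further homotopy of $\phi$ we may take $D$ to be the unstable (co-core) disk of a single index-$2$ critical point $p$. Here I would follow Eliashberg--Ganatra--Lazarev \cite{eliashberg2020flexible}. They show that a regular Lagrangian disk can be arranged so that $\phi|_D$ has a single critical point, a minimum at $p$, and that minimum must then be an index-$2$ critical point of $\phi$ whose descending disk is $D$. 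I expect arranging this to be the main obstacle: tangency of $X_\lambda$ to $D$ gives $\phi|_D$ critical points that are critical points of $\phi$, and one must reduce to exactly one, which may require Weinstein cancellations inside $D$.

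Once this is arranged, reorder the critical points so that all index-$0$ and index-$1$ points lie below all index-$2$ points. The $2$-handles are then attached to $\natural^k(S^1\times B^3)$ along a Legendrian link $\Lambda_1\cup\cdots\cup\Lambda_m$, and $D$ is the co-core of the handle at $p$. A homotopy $4$-ball built from $0$-, $1$-, and $2$-handles has $m = k$. Take $L$ to be the link of belt spheres in $\partial B^4 = S^3$. Contact-$(+1)$ surgery on all the belt spheres removes all the $2$-handles, as in \cref{lemma:surg_cancel}. The result is $\partial\natural^k(S^1\times B^3)$, which is tight $\#^k(S^1\times S^2)$. Hence $L$ is a Legendrian $R$-link, and it contains $\partial D = \Lambda$ as a component.
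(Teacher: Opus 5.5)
Your proposal follows the paper's proof in both directions. The converse uses the co-core of the relevant $2$-handle in the filling $\natural^g(S^1\times B^3)\cup W$ built in the proof of \cref{thm:stable_leg_gen_prop_r}. The forward direction deletes all $2$-handles of a Weinstein structure on $B^4$ in which $\Lambda$ is a belt sphere; the paper simply takes the existence of that structure as given by regularity.

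One correction to how you elaborate that last step. A unique minimum of $\phi|_D$ need not be an index-$2$ critical point of $\phi$. The flat disk $B^4\cap\R^2$ is a counterexample: it is tangent to the radial Liouville field, and its only critical point is the index-$0$ center. So reducing to a single critical point is not enough. You need the full cited fact that, after a Weinstein homotopy, the regular disk becomes the co-core of a $2$-handle; in the flat example this homotopy is a $1$--$2$ birth. That co-core is the ascending (unstable) disk of the index-$2$ point, not its descending disk, which is the core.
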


\begin{proof}
Suppose that $\Lambda$ is regularly slice. By assumption there is a Weinstein structure on $B^4$, homotopic to the radial structure, for which $\Lambda$ is the belt sphere of a $2$-handle. Let $L$ denote the collection of remaining $2$-handle belt spheres. Contact-$(+1)$ surgery on $\Lambda \cup L$ is witnessed by the removal of all $2$-handles. At the $4$-dimensional level, this leaves $\natural^{|\Lambda \cup L|} (S^1 \times B^3)$ with its standard Weinstein structure and tight boundary $\#^{|\Lambda \cup L|} (S^1\times S^2)$. Hence, $\Lambda \cup L$ is a Legendrian $R$-link. 

The converse direction follows from the construction described in the the proof of \cref{thm:stable_leg_gen_prop_r}. Namely, suppose that $\Lambda$ is a component of some Legendrian $R$-link $\Lambda \cup L$. Letting $Y = S^3_{\Lambda \cup L}((+1)) \cong \#^{|\Lambda \cup L|} (S^1\times S^2)$, we first build a Weinstein cobordism from $Y$ to $S^3$ by attaching Weinstein $2$-handles along a contact push-off $\Lambda' \cup L'$ of $\Lambda \cup L$. We promote this to a Weinstein filling of $S^3$, necessarily deformation equivalent to the radial Weinstein structure on $B^4$, by filling $Y$ with $\natural^{|\Lambda \cup L|} (S^1 \times B^3)$. After a possible self-diffeomorphism of $B^4$, the resulting Weinstein structure is homotopic to the radial structure, by uniqueness of such fillings \cite{cieliebak2012stein}. 

We claim that the co-core of the $2$-handle attached along the push-off of $\Lambda$ yields a regular slice disk for $\Lambda$. Indeed, the belt sphere is isotopic to a further contact push-off $\Lambda''$ of $\Lambda'$ in $S^3$. Canceling all contact-$(\pm 1)$ surgeries leaves $\Lambda''\subset S^3$, which is Legendrian isotopic to $\Lambda$. 
\end{proof}

For a smooth knot $K\subset S^3$, a \emph{partial derivative} supported by a Seifert surface $F$ is a link $J\subset F$ such that components of $J$ are pairwise non-homotopic in $F$ and on which the Seifert form vanishes. (A derivative is further required to cut $F$ into a connected planar surface.) The final lemma is the contact enrichment of \cite[Proposition 3.2]{miller2023handle} and upgrades a Legendrian $R$-link partial derivative to a Legendrian $R$-link derivative. Compared to the smooth statement, there are additional contact complications and the proof is somewhat more involved. 

\begin{lemma}\label{lemma:pd_to_der}
Let $\Lambda\subset (S^3, \xi_{\mathrm{st}})$ be a Legendrian knot. Assume $\Lambda$ admits a Seifert surface $F$ supporting a Legendrian partial derivative $L$ such that $\Lambda \cup L$ is a Legendrian $R$-link. Then $\Lambda$ admits a Seifert surface supporting a Legendrian $R$-link derivative.   
\end{lemma}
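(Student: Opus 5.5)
Following \cite[Proposition 3.2]{miller2023handle}, I will argue by induction on the complexity of $F - L$. Measure it by the genus of the surface $F_L$ obtained by surgering $F$ along $L$: compress $F$ along the disks that the $0$-framed components of $L$ bound after contact-$(+1)$ surgery, or equivalently cut $F$ along $L$. If $F - L$ is connected and planar, $L$ is already a derivative and there is nothing to prove. Otherwise I will produce a new Seifert surface $F'$ for $\Lambda$ and a Legendrian partial derivative $L'\subset F'$, with $\Lambda\cup L'$ again a Legendrian $R$-link and strictly smaller complexity.

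\textbf{Step 1: make the surface convex.} The Seifert form vanishes on $L$ and $L$ consists of $\mathrm{tb}=-1$ components of an $R$-link, so each component of $L$ is $0$-framed by $F$ with $\mathrm{tb}=-1$. Hence $\mathrm{tb}\le$ (surface framing) holds on all of $\Lambda\cup L$. By \cref{thm:legendrian_bdry_convex}, applied to $F$ cut along $L$ (which has Legendrian boundary), I can perturb $F - N(L)$ to be convex. By \cref{lemma:nbd_size}, its dividing set then meets each copy of each $L_i$ in exactly two points.

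\textbf{Step 2: understand the surgered manifold.} Perform contact-$(+1)$ surgery along $\Lambda \cup L$. The result $Y$ is tight $\#^g(S^1\times S^2)$. Cap the components of $L$ with convex meridian disks as in the proof of \cref{lemma:planar_surface_bounds_R_link_new}, so that $F - N(L)$ becomes a closed convex surface $\widehat F$ together with the meridian disk of $\Lambda$. The key topological input is the one Miller--Zupan use: in $\#^g(S^1\times S^2)$ with an $R$-link, the surface $\widehat F$, which represents a class whose intersection pairing with the dual spheres is controlled, must be compressible while it has positive genus. In $Y$, I can take a compressing disk $D$ for $\widehat F$ whose boundary curve $c$ is essential in $F - L$ and avoids the capping disks. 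By the Legendrian realization principle (\cref{thm:LeRP}), I make $c$ Legendrian. Tightness of $Y$ together with Giroux's criterion (\cref{thm:criterion}), applied to convex $D$, gives $\mathrm{tb}(c)=-1$ relative to the $D$-framing. Pulling back to $S^3$, $c$ is $0$-framed by $F$ and, after isotoping $D$ off the surgery tori, $c$ is disjoint from the dual disks.

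\textbf{Step 3: induct.} Set $L' = L \cup c$. Since $c$ bounds a disk in $Y$ whose $0$-framing matches both $F$ and its contact framing, contact-$(+1)$ surgery along $c$ in $Y$ performs the standard $(+1)$ surgery on a $\mathrm{tb}=-1$ unknot in a ball. That yields $Y\#(S^1\times S^2,\xi_{\mathrm{st}})$, which is tight by \cite{Ding2009HandleMI}. So $\Lambda\cup L'$ is a Legendrian $R$-link, and the complexity of $F - L'$ drops. If $c$ is instead parallel in $F$ to a component of $L$, or separating, I modify the surface rather than the link: I use handleslides along dividing arcs, which are Type-$R$ and hence contact-$(+1)$ handleslides by \cref{lemma:div_curve_h_slide}, or I tube $F$ along $D$ to get $F'$ of lower genus. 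Iterating, the complexity terminates with $F-L$ connected and planar.

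\textbf{Main obstacle.} I expect the hardest step to be Step 2. One must show that the compressing disk $D$ can be chosen Legendrian-compatibly, meaning that $c$ has $\mathrm{tb}=-1$ relative to the $F$-framing and can be made disjoint from the surgery solid tori, so that $c$ really lives in $S^3$ as a Legendrian curve on a convex $F$. Here I would first try making $\widehat F\cup D$ convex simultaneously, then use edge-rounding (\cref{fig:edge}) and Giroux's criterion to read off the dividing set of $D$. The non-isolating hypothesis needed for Legendrian realization should follow because every component of $F-L$ meets the dividing set, by \cref{lemma:planar_surface_bounds_R_link_new}-type reasoning.
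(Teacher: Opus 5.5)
There is a genuine gap in Step 2, at the claim that tightness of $Y$ together with Giroux's criterion forces $\mathrm{tb}(c)=-1$. Giroux's criterion (\cref{thm:criterion}) only rules out closed dividing curves on the convex disk $D$. It tells you that $\Gamma_D$ consists of $n\ge 1$ arcs, so $\mathrm{tb}(c)=-n=-\tfrac12|c\cap\Gamma_{\widehat F}|$, and nothing prevents $n>1$. Any stabilized unknot in a tight manifold bounds exactly such a disk. Moreover, $|c\cap\Gamma_{\widehat F}|$ is dictated by where $\Gamma_{\widehat F}$ sits on $\widehat F$, not by tightness. You correctly flagged this as the main obstacle, but edge-rounding and Giroux's criterion cannot force $n=1$. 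When $n>1$, $c$ is $0$-framed by $F$ but has $\mathrm{tb}=-n$. Contact-$(+1)$ surgery on $c$ is then topological $(1-n)$-surgery, so $\Lambda\cup L\cup c$ is not a Legendrian $R$-link and Step 3 collapses.

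The natural repair is to destabilize $c$ by isotoping $\widehat F$ past bypass half-disks coming from outermost arcs of $\Gamma_D$. This also fails as stated, because that isotopy happens in $Y$, and $D$ is in general pierced by the dual surgery link $\Lambda^*\cup L^*$. Your assertion that $D$ can be isotoped off the surgery tori is not justified, and it is exactly what cannot be assumed.

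The missing idea is the paper's fracturing step (\textsc{Step 1B}). Take a bypass half-disk in $D$ with Legendrian arc $\beta$, and shrink the surgery tori so they miss $\beta$. Then stabilize $F$ inside $S^3$ by tubing it along the convex boundary of the contact $1$-handle $N(\beta)$. This gives $D\cap F'=J'\cup J''$ with $\mathrm{tb}(J')=-1$ and $\mathrm{tb}(J'')=-n+1$. Both bound subdisks of $D$, hence are $0$-framed by $F'$ and, via \cite[Lemma 2.4]{miller2023handle}, algebraically unlinked in $S^3$. The genus of the cut-open surface is unchanged. Iterating replaces $c$ by components with $\mathrm{tb}=-1$, at the price of stabilizing $F$ rather than isotoping it.

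You also need a different endgame. Adding compressing curves leaves $F'-L'$ planar but typically disconnected. Your proposed ``tubing $F$ along $D$'' is unavailable, since $D$ exists only in $Y$. The paper instead accepts disconnected planar pieces and observes via \cref{lemma:planar_surface_bounds_R_link_new} that each is a standard convex planar surface. It then uses the dividing arcs and \cref{lemma:div_curve_h_slide} to perform contact-$(+1)$ handleslides, turning the disconnecting components into a split max-tb unlink, which can be discarded.

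A minor point: the fact that $\Gamma$ meets each $L_i$ in two points comes from \cref{thm:legendrian_bdry_convex}, not \cref{lemma:nbd_size}.
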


\begin{proof}
The Legendrian partial derivative may not cut $F$ into a planar surface. In this case, \textsc{Step 1A} and \textsc{Step 1B} below describe how to produce a new Seifert surface $F'$ supporting a Legendrian partial derivative $L'$ such that $\Lambda \cup L'$ is a Legendrian $R$-link and $g(F' - L') < g(F - L)$ (for a disconnected surface $S$, $g(S)$ is defined here as a tuple of the genera of its connected components, listed in non-increasing order, and compared with the dictionary ordering). Once planarity of the cut open Seifert surface is ensured, \textsc{Step 2} describes how to extract a Legendrian $R$-link derivative, completing this proof. 

First, assume $F - L$ is not planar. 

\vspace{2mm}
\textsc{Step 1A:} \textit{Enlarge $L$ to $L \cup J$ so that $g(F - (L\cup J)) < g(F - L)$.}
\vspace{2mm}

Performing contact-$(+1)$ surgery on $\Lambda \cup L$ gives tight $\#^{|\Lambda \cup L|}(S^1 \times S^2)$. As in the proof of \cref{lemma:planar_surface_bounds_R_link_new}, we cap off the surface $F - N(\Lambda \cup L)$ with meridian disks in the surgery tori (one disk in the surgery torus corresponding to $\Lambda$, and two disks in each torus corresponding to each component of $L_i$) to obtain a closed, possibly disconnected, convex surface $\hat{F}\subset \#^{|\Lambda \cup L|}(S^1 \times S^2)$. We abuse notation and use $F$ to refer both to the natural subsurface $F- N(\Lambda \cup L)\subset \hat{F}\subset \#^{|\Lambda \cup L|}(S^1 \times S^2)$, and also to the original surface in $S^3$. 

The rest of this step is essentially the same as the argument in the proof of \cite[Proposition 3.2]{miller2023handle}. In $\#^{|\Lambda \cup L|}(S^1 \times S^2)$, every surface with positive genus is compressible. Since $F - L$ is not planar, $\hat{F}$ contains a component with positive genus, hence there is a compressing disk $D\subset \#^{|\Lambda \cup L|}(S^1 \times S^2)$ for $\hat{F}$ with boundary $J = \partial D\subset \hat{F}$. We can assume up to isotopy that $J$ is disjoint from $L$, since $L$ is homotopically trivial in $\hat{F}$. 

We claim that $L\cup J \subset F \subset S^3$ is a smooth partial derivative for $\Lambda$, and that $\Lambda \cup L \cup J$ is a topological $R$-link. Since $J$ bounds a compressing disk for $\hat{F}$, it is unknotted, unlinked with $L$, and $0$-framed by $F$. Thus, $L\cup J \subset F \subset S^3$ is a smooth partial derivative for $\Lambda$. Moreover, since $J$ is unknotted in $\#^{|\Lambda \cup L|}(S^1 \times S^2)$, 
\[
S^3_{\Lambda \cup L \cup J}(0) = (\#^{|\Lambda \cup L|}(S^1 \times S^2))_J(0) = \#^{|\Lambda \cup L\cup J|}(S^1 \times S^2).
\]
It follows that $\Lambda \cup L \cup J$ is a topological $R$-link. By construction, $g(F - (J \cup L)) < g(F-L)$.  We next need to arrange for the Legendrian $R$-link condition.

\vspace{2mm}
\textsc{Step 1B:} \textit{Stabilize $F$ and refine $J$.}
\vspace{2mm}

We may assume after a generic perturbation of $J$ that $J\subset F \subset \#^{|\Lambda \cup L|}(S^1 \times S^2)$ intersects the dividing curves $\Gamma_F$ of $F$ transversely, hence is Legendrian by the Legendrian realization principle. By tightness, unknottedness of $J$ in $\#^{|\Lambda \cup L|}(S^1 \times S^2)$, and the fact that $J$ is $0$-framed by $F$, $J$ necessarily has nonempty intersection with the dividing curves.  If $|J \cap \Lambda_{\hat F}| = 2$, then $J$ is a max-tb unknot.  Otherwise, suppose $|J \cap \Gamma_{\hat{F}}| > 2$. We first describe how to remedy the situation on $\hat{F}$ in $\#^{|\Lambda \cup L|}(S^1 \times S^2)$.

\begin{figure}[ht]
	\centering
	\begin{overpic}[scale=.4]{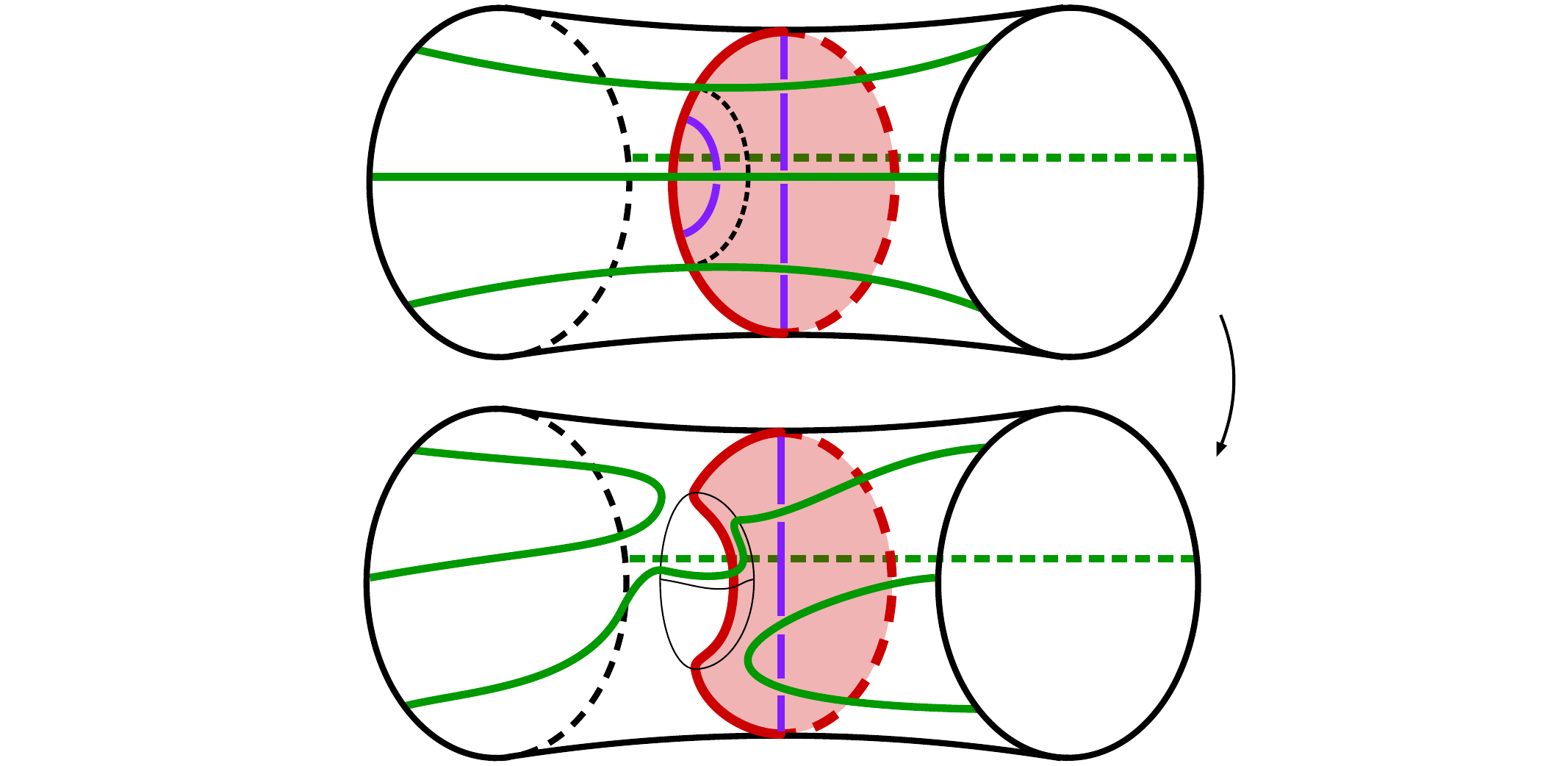}
        \put(80,23){\small bypass}
        \put(76,45){$\hat{F}\subset \#^{|\Lambda \cup L|}(S^1 \times S^2)$}
        \put(49,23.5){\textcolor{darkred}{$J$}}
        \put(39.5,29){\textcolor{darkpurple}{$\Gamma_{D_{J}}$}}
        \put(48,34){\tiny $\beta$}
        \put(20,32.5){\textcolor{darkgreen}{$\Gamma_{\hat{F}}$}}
	\end{overpic}
	\caption{A schematic picture for the isotopy of $\hat{F}$ when $\mathrm{tb}(J)= -2$. Dividing curves for the compressing disk $D_{J}$ are in purple, while those of $\hat{F}$ are in green. The bypass half-disk in $D_{J}$ is outlined by the dashed black arc $\beta$ in the top level.}
	\label{fig:bypassdisk}
\end{figure}

We may assume after a perturbation that the compressing disk $D_J$ is convex. Since $\#^{|\Lambda \cup L|} (S^1 \times S^2)$ is tight, Giroux's criterion implies that the dividing curves of $D_{J}$ consist of $n>1$ arcs, where $|J \cap \Gamma_{\hat{F}}| = 2n$ and thus $\mathrm{tb}(J) = -n$. By \cite[Proposition 3.18]{honda2000classification}, each outermost boundary parallel dividing curve on $D_{J}$ gives rise to a bypass half-disk for $\hat{F}$. By isotoping $\hat{F}$ in $\#^{|\Lambda \cup L|}(S^1 \times S^2)$ past each half-disk and carrying along $J$ through the isotopy, it is possible to destabilize $J$ until $\mathrm{tb}(J)=-1$. Such an isotopy of $\hat{F}$ modifies the dividing curves of $\hat{F}$ near $J$ so that $J$ intersects them twice. See \cref{fig:bypassdisk}. 

The issue is that this isotopy of $\hat{F}$ inside $\#^{|\Lambda \cup L|}(S^1 \times S^2)$ may not be induced by an isotopy of $F$ inside $S^3$, as the compressing disk only exists in the former; said differently, in $\#^{|\Lambda \cup L|}(S^1 \times S^2)$ there are dual surgery curves that possibly intersect the compressing disks. Instead of isotoping $\hat{F}$ past the bypass half-disk, we will stabilize $F$ (inside $S^3$) using the boundary of the contact $1$-handle associated to a neighborhood of the bypass half-disk (see the refinement process of \cite[\S 5]{licata2024heegaard}) and exchange the destabilization of $J$ for a fracturing process that adds additional components to the partial derivative.  

To do this, first let $N\subset \#^{|\Lambda \cup L|}(S^1 \times S^2)$ denote the union of solid surgery tori, so that 
\[
\#^{|\Lambda \cup L|}(S^1 \times S^2) - N = S^3 - N(\Lambda \cup L).
\]
Note that $N$ is a standard tubular neighborhood of the dual surgery Legendrian link $\Lambda^* \cup L^*$ in $\#^{|\Lambda \cup L|}(S^1 \times S^2)$. We may assume that $\Lambda^*\cup L^*$ is transverse to the interior of $D_{J}$. Furthermore, by shrinking $N$ if necessary we may assume that $N \cap D_{J}$ is disjoint from the boundary of each bypass half-disk (e.g.\ the dashed black arc labeled $\beta$ in \cref{fig:bypassdisk} and \cref{fig:bypassdisk2}.\footnote{In fact, we can assume that $\Lambda^*\cup L^*$ intersects the dividing set, as described in \cite[Lemma 4.2]{breen2024regularlysliceimpliesoncestably}.}  

\begin{figure}[ht]
	\centering
	\begin{overpic}[scale=.4]{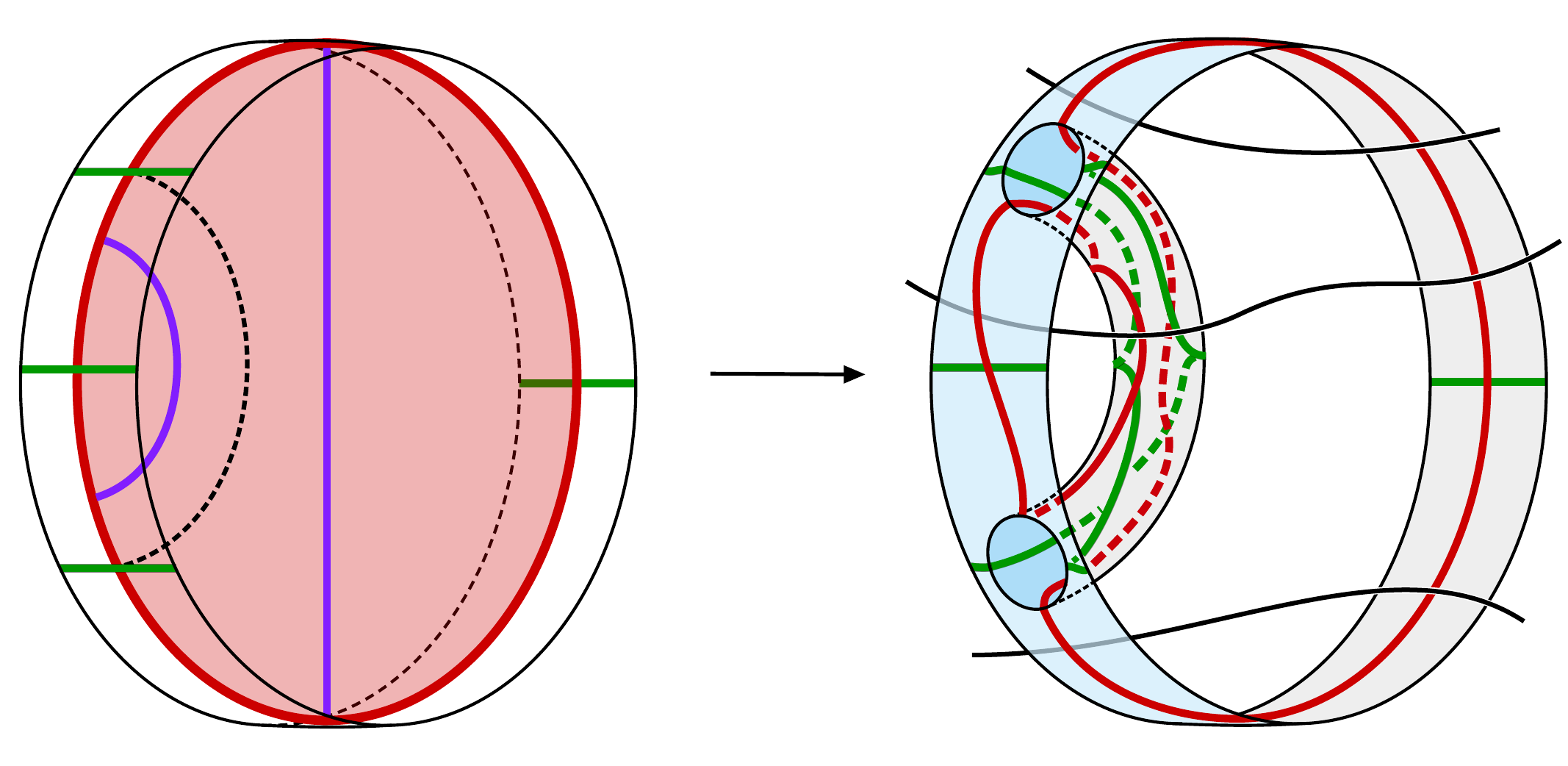}
        \put(78.5,13){\small $\Lambda^* \cup L^*$}
        \put(2,40){\small $F$}
        \put(16,30){\small $\beta$}
        \put(40,35){\small \textcolor{darkred}{$J$}}
        \put(60,40){\small $F'$}
        \put(58,35){\small \textcolor{darkred}{$J'$}}
        \put(84,35){\small \textcolor{darkred}{$J''$}}
	\end{overpic}
	\caption{Stabilizing $F\subset S^3$ with a tube along the arc $\beta$ of the bypass half-disk.}
	\label{fig:bypassdisk2}
\end{figure}

Let $\beta$ be a Legendrian arc properly embedded in $D_{J}$ (partially) bounding a bypass-half disk, and let $N(\beta)$ be a solid standard tubular neighborhood with convex boundary, i.e.\ a contact $1$-handle with core $\beta$. As $\beta$ bounds a bypass half-disk, and the boundary of a bypass half-disk is a max-tb unknot, the contact structure (and hence the dividing curves of $\partial N(\beta)$) make half of a left-handed twist relative to $D_{J}$ along $\beta$. Let $F'$ denote the surface obtained by attaching $\partial N(\beta)$ to $F$. By taking $N(\beta)$ sufficiently small, we may assume $F' \subset S^3$. Now, let $J' \cup J'' = D_{J} \cap F'$, and replace $J$ by with $J' \cup J''$, letting $L' = L \cup J' \cup J''$; see \cref{fig:bypassdisk2}. We make a number of observations. 

\begin{enumerate}
    \item The pair $J'\cup J''$ is an unlink in $\#^{|\Lambda \cup L|}(S^1 \times S^2)$, as $J'$ and $J''$ bound separate subdisks of $D_{J}$. Moreover, each component is $0$-framed by $F'$.

    \item Both $J'$ and $J''$ are $0$-framed in $S^3$ by $F'$, and are algebraically unlinked in $S^3$. This is immediate in $\#^{|\Lambda \cup L|}(S^1 \times S^2)$ by the previous observation. The conclusion in $S^3$ follows from the fact that $J'$ and $J''$ can be deformed to unlinked unknots in $S^3$ by isotoping them through strands of $\Lambda' \cup L'$ in  $\#^{|\Lambda \cup L|}(S^1 \times S^2)$, the dual surgery link that intersects $D_{J}$ transversely. By \cite[Lemma 2.4]{miller2023handle}, this implies that $J'$ and $J''$ are unknotted and unlinked in $S^3$ up to $0$-framed handleslides across $\Lambda \cup L$. Such handleslides preserve the framing and algebraic linking number.    

    \item We have $|J' \cap \Gamma_{\hat{F}}| = 2$ and $|J'' \cap \Gamma_{\hat{F}}| = 2n-2$, hence $\tb(J') = -1$ and $\tb(J'') = -n+1$. This is a consequence of the above remark on the contact framing of the arc $\beta$. Alternatively, one can make the same conclusion by appealing to the Seifert subdisks of $D_{J}$ in $\#^{|\Lambda \cup L|}(S^1 \times S^2)$.

    \item The surface $F' - (L \cup J' \cup J'')$ has the same genus as $F - (L \cup J)$. To see this, it suffices to verify that deleting $J' \cup J''$ from the local subsurface of $F'$ on the right side of \cref{fig:bypassdisk2} results in a planar surface. This is evident by inspection.
\end{enumerate}

We can cycle through \textsc{Step 1B} again, this time applied to $J''$, to continue fracturing the partial derivative and increasing $\tb$, renaming $F'$ and $L'$ at each step. We continue until each component added to $L$ associated to the initial compressing disk $D_J$ has $\tb = -1$. Thus, after cycling through \textsc{Step 1B} sufficiently many times, we obtain a Seifert surface $F'$ supporting a Legendrian partial derivative $L'$ such that $\Lambda \cup L'$ is a Legendrian $R$-link and $g(F' - L') < g(F - L)$. 

If $F'- L'$ is still not planar, we return to \textsc{Step 1A} to find another compressing disk and repeat the process. Thus, after cycling through the entirety of \textsc{Step 1} sufficiently many times, we obtain a Seifert surface $F'$ supporting a Legendrian partial derivative $L'$ such that $\Lambda \cup L'$ is a Legendrian $R$-link and $F' - L'$ is planar. 

\vspace{2mm}
\textsc{Step 2:} \textit{Extract a derivative on $F'$ from $L'$.}
\vspace{2mm}

To begin, note that, in general, if $\tilde{L} \cup \tilde{U}$ is a Legendrian $R$-link and $\tilde{U}$ is a split max-tb unlink, then $\tilde{L}$ is a Legendrian $R$-link. Additionally, the property of being a Legendrian $R$-link is preserved under contact-$(+1)$ handleslides. Thus, to extract a Legendrian $R$-link derivative supported by $F'$ from $L'$, we perform contact-$(+1)$ handleslides in $F'$ among the components of $L'$ to a produce a split link $\tilde{L} \sqcup \tilde{U}$, where $\tilde{U}$ is a max-tb unlink and $F' - L'$ is connected; the link $\tilde{L}$ is the resulting Legendrian $R$-link derivative. 

To see that such a sequence of handleslides is possible, note that each connected (planar) component of $F' - L'$ is a standard convex planar surface by \cref{lemma:planar_surface_bounds_R_link_new}. Using the dividing arcs as a guide, we apply \cref{lemma:div_curve_h_slide} to contact-$(+1)$ handleslide any disconnecting components to unlinked max-tb unknots; see \cref{fig:unknottingslide}. This completes the proof. 
\end{proof}

\begin{figure}[ht]
	\centering
	\begin{overpic}[scale=.25]{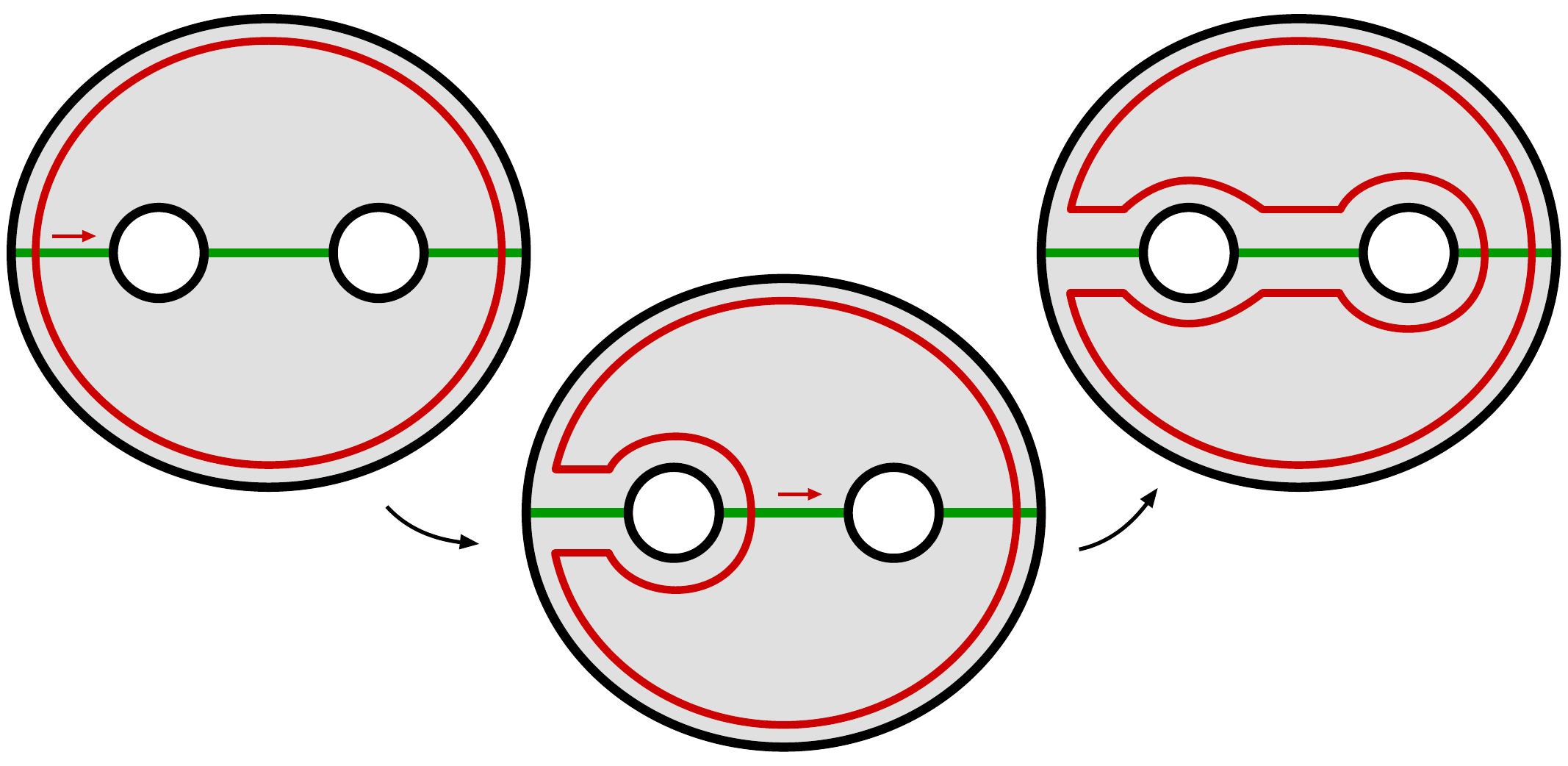}
        
	\end{overpic}
	\caption{Sliding across derivative components to produce max-tb unknots.}
	\label{fig:unknottingslide}
\end{figure}

\begin{proof}[Proof of \cref{thm:legKauffman}.]
First, assume that $\Lambda$ is regularly slice. By \cref{lemma:reg_slice_R_link_component}, $\Lambda$ is a component of a Legendrian $R$-link $\Lambda \cup L$. By the proof of \cite[Theorem 1.3]{miller2023handle}, we may choose a Seifert surface $F_0$ for $\Lambda$ which avoids $L$. For each (necessarily $\mathrm{tb}=-1$) component $L_i$ of $L$, let $T^2_i$ denote the oriented boundary of a sufficiently small standard neighborhood of $L_i$, and $L_i'\subset T^2_i$ a Legendrian push-off of $L_i$. 

Next we use round $1$-handles to connect the tori $T_i^2$ to $F_0$. Specifically, for each $i$ let $\gamma_i$ be a Legendrian arc satisfying the following properties: 
\begin{enumerate}
    \item One endpoint of $\gamma_i$ is on the dividing set of $F_0$, while the other is on the dividing set of $T_i^2$ away from $L_i'$.  
    \item The convex boundary of a small standard neighborhood of $\gamma_i$ forms an orientable round $1$-handle attachment between $F_0$ and $T_i^2$. 
\end{enumerate}
Note that we may also describe the tubing between $F_0$ and $T^2_i$ as the boundary of a contact $1$-handle attachment with core $\gamma_i$ to a standard invariant neighborhood of $F_0$ and the neighborhood of $L_i$. 

Let $F$ be the resulting convex Seifert surface for $\Lambda$ obtained by tubing each $T_i^2$ to $F_0$ along the $\gamma_i$ arcs. Let $L' = \bigcup_i L_i'$. Since $L'$ is Legendrian isotopic to $L$, $\Lambda \cup L'$ is a Legendrian $R$-link by assumption. By construction, $L'$ is moreover a partial derivative for $\Lambda$ supported on $F$. Thus, by \cref{lemma:pd_to_der}, $\Lambda$ admits a Legendrian $R$-link derivative in some Seifert surface $F'$.   

Next we prove the converse implication. Suppose that $\Lambda$ has $\mathrm{tb}(\Lambda) = -1$ and admits a tight $R$-link derivative $L$ in a Seifert surface $F$. We claim that $\Lambda \cup L$ is a Legendrian $R$-link, which by \cref{lemma:reg_slice_R_link_component} implies that $\Lambda$ is regularly slice. In fact, we will argue that $\Lambda$ is a max-tb unknot in tight $Y := S^3_{L}((+1)) \cong \#^{|L|}(S^1\times S^2)$, which then implies the chain of (tight) contactomorphisms
\[
S^3_{\Lambda \cup L}((+1)) \cong Y_{\Lambda}((+1)) \cong (S^1 \times S^2) \,\#\, Y \cong \#^{|\Lambda \cup L|}(S^1 \times S^2)
\]
and consequently the fact that $\Lambda \cup L$ is a Legendrian $R$-link. 

To see that $\Lambda$ is a max-tb unknot in $Y$, again as in the proof of \cref{lemma:planar_surface_bounds_R_link_new} we may construct a convex Seifert disk for $\Lambda$ in $Y = S_{L}((+1))$ by gluing on meridional disks in the surgery tori corresponding to $L$ to the planar surface $F - L$. The framing of the resulting disk on $\Lambda$ agrees with the $F$-framing. As $\mathrm{tb}(\Lambda) = -1$ by assumption, we conclude the proof.
\end{proof}

\subsection{Unlink derivatives for decomposably slice knots} Our focus so far has been establishing regular Lagrangian sliceness rather than decomposability, the primary reason being \cref{lemma:div_curve_h_slide}. The derivative characterization in \cref{thm:legKauffman} is rooted in convex surface theory, and the referenced lemma establishes that the convex surface model naturally gives constructions and manipulations of Legendrians which arise from band sums incompatible with the usual formulation of decomposable cobordisms in the front. Nevertheless, concerning decomposability, here we prove \cref{thm:decKauffman}.

First, we recall the idea behind the relevant direction of \cref{lemma:ribbon_derivative}. Given a ribbon disk $D$ with $g$ ribbon singularities $\gamma_1, \dots, \gamma_g$, we obtain an unlink derivative on an embedded Seifert surface as follows. For each ribbon singularity $\gamma_i$, let $N(\gamma_i)$ be a tubular neighborhood and let $U_i\subset D$ be the unknot which is the closed component of $N(\gamma_i) \cap D$. Along each ribbon singularity, perform an oriented "cellar-door resolution" as depicted in \cref{fig:cellar}. The resulting embedded Seifert surface has genus $g$ and the unlink $U := U_1 \cup \cdots \cup U_g$ is a derivative. As a shorthand, we will refer to the unlink $U$ embedded in $D$ (before resolution) as a \emph{ribbon derivative}.

\begin{figure}[ht]
	\centering
	\begin{overpic}[scale=.35]{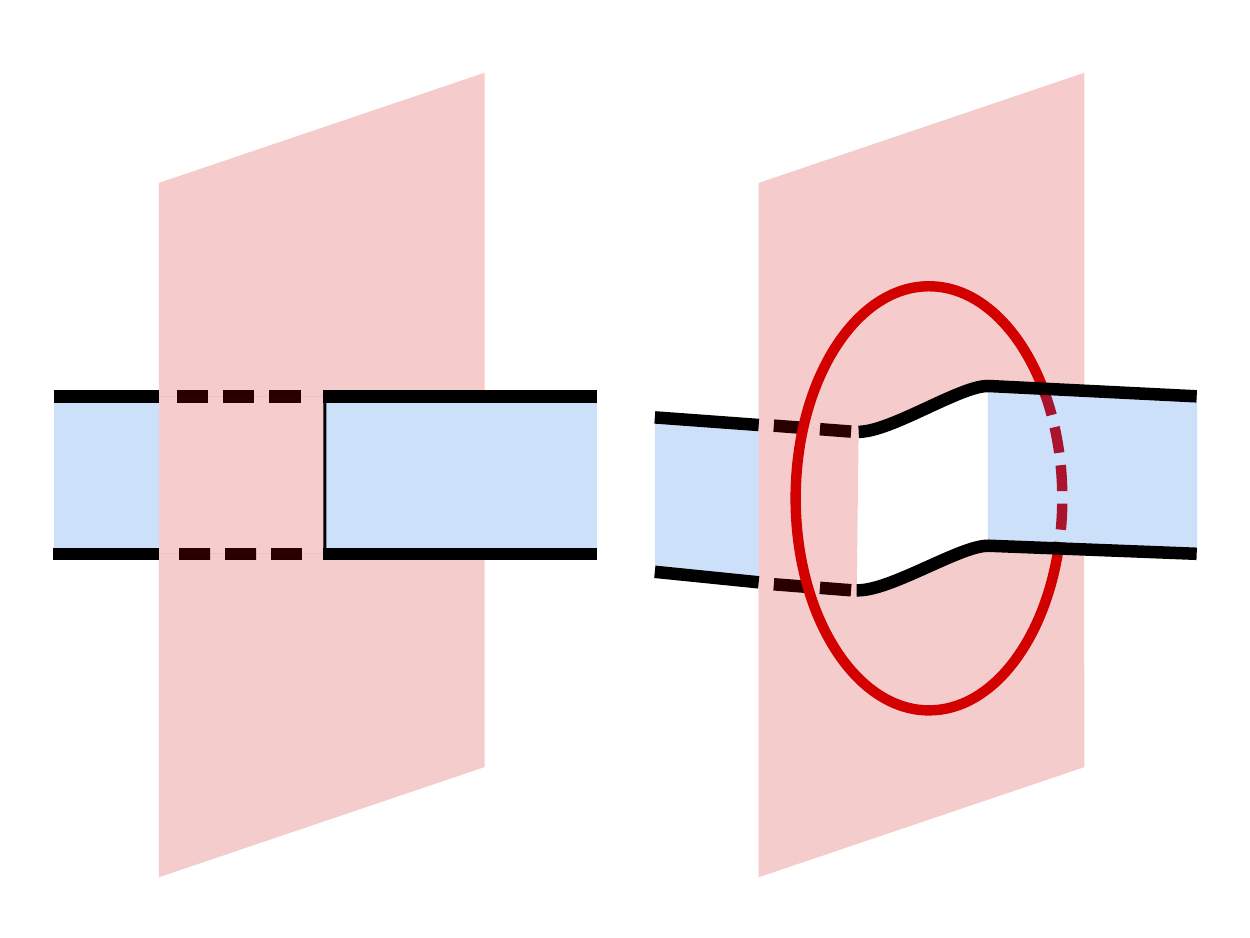}
       
	\end{overpic}
	\caption{A cellar-door resolution with a component of a ribbon derivative.}
	\label{fig:cellar}
\end{figure}

\begin{proof}[Proof of \cref{thm:decKauffman}.] 
Since $\Lambda$ is decomposably slice, it is obtained from a sequence of max-tb unknot births, ambient Legendrian surgeries along embedded Legendrian arcs, and Legendrian isotopies. We may arrange for all unknot births to occur before any requisite isotopies and surgeries, so we begin with an max-tb unlink $U_1 \cup \cdots \cup U_n$ bounding $n$ disconnected Seifert disks. We will argue that the sequence of decomposable moves producing $\Lambda$ can be arranged to give a ribbon disk $D$ with a max-tb unlink ribbon derivative.

We proceed inductively. Assuming that we have constructed a (possibly disconnected) ribbon disk with a max-tb unlink ribbon derivative, we will show that the next step of the sequence (either Legendrian isotopy or ambient Legendrian surgery) yields another ribbon disk with a max-tb unlink ribbon derivative. This is immediate with any Legendrian isotopy, as Legendrian isotopy coincides with ambient contact isotopy. Thus, it remains to consider an ambient Legendrian surgery. 

After a possible isotopy of the Legendrian surgery arc, we may assume that its interior is transverse to the ribbon disk constructed thus far, and that a neighborhood of the arc is given by the left side of \cref{fig:dec-derivative}. For each transverse intersection of the arc with the surface, identify a small meridian max-tb unknot and perturb the transverse sheet to support the unknot. After attaching the associated band, we obtain a new ribbon disk with a max-tb unlink ribbon derivative; see the right side of \cref{fig:dec-derivative}.

\begin{figure}[ht]
	\centering
	\begin{overpic}[scale=.37]{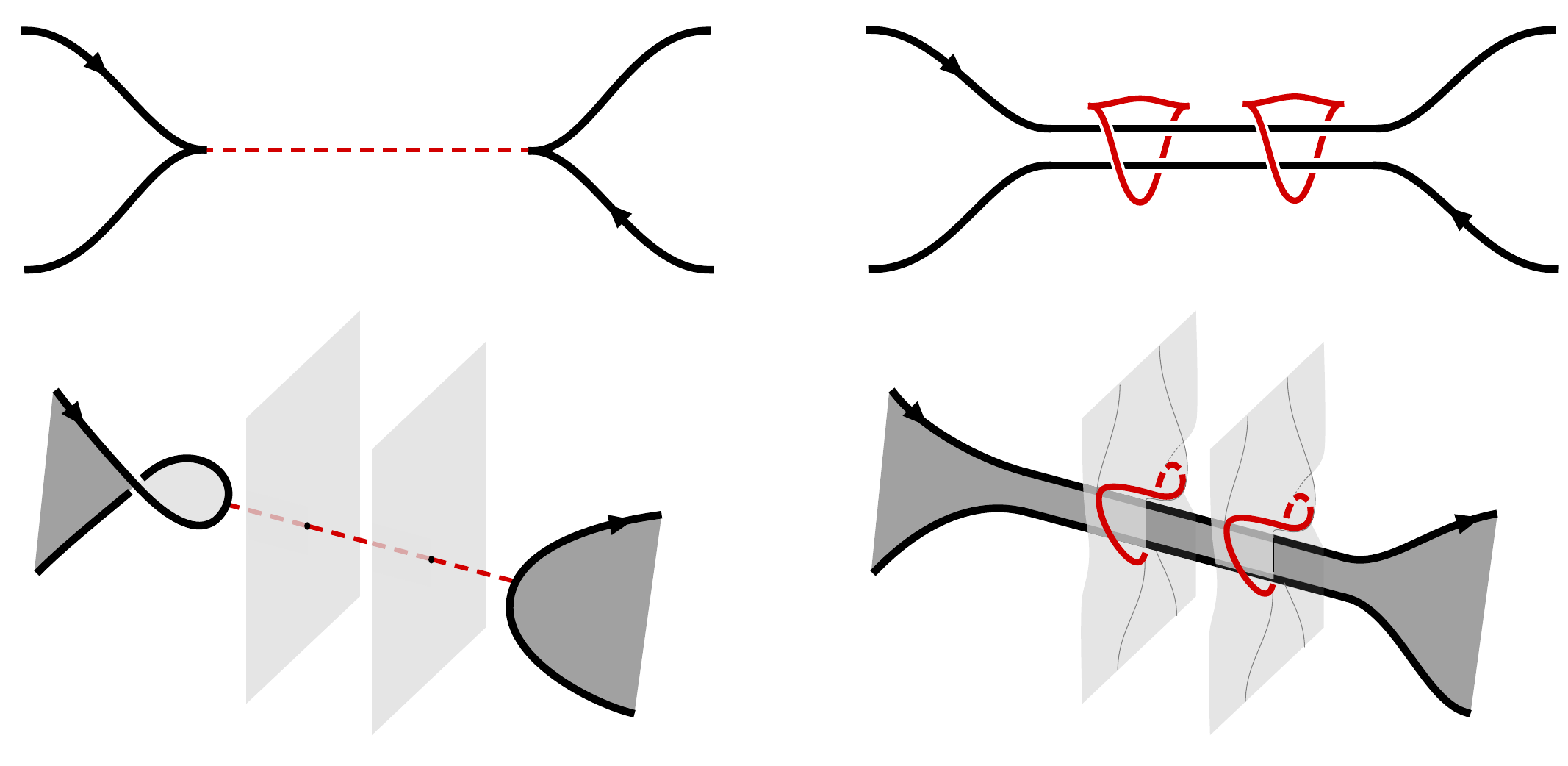}
       
	\end{overpic}
	\caption{The top row is an honest front projection, with the ribbon disk omitted. The bottom row is a tilted view with the ribbon disk shaded.}
	\label{fig:dec-derivative}
\end{figure}

Note that if $\Lambda$ is \emph{strongly decomposably slice} \cite{breen2024regularlysliceimpliesoncestably}, i.e.\ $\Lambda$ admits a \emph{Legendrian handle graph presentation} from a max-tb unlink \cite{sabloff2024upper}, i.e.\ $\Lambda$ admits a decomposable slice disk presentation where all $1$-handles commute, the proof is more straightforward, as one can explicitly identify the entire ribbon disk and ribbon derivative in one fell swoop. The reason why the above proof is phrased inductively is because some of the $1$-handles may be nested and non-commutable; see the above references for more details.
\end{proof}

\section{Transverse \(R\)-links and transverse derivative links}\label{sec:transverse}

In this section we consider $R$-links and derivative links in the transverse setting. 

\begin{lemma}\label{lemma:qp_unlink_derivative}
Let $\mathcal{T}\subset (S^3, \xi_{\mathrm{st}})$ be a symplectically slice transverse knot. Then $\mathcal{T}$ admits a Seifert surface supporting a max-sl unlink derivative.      
\end{lemma}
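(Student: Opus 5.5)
The plan is to pass through quasipositivity and mimic the decomposable argument in \cref{thm:decKauffman}, with braided surfaces in place of decomposable cobordisms. By the results of Rudolph \cite{rudolph1983seifertribbons} and Boileau--Orevkov \cite{boileau2001quasipositive}, a symplectically slice transverse knot is transversely isotopic to the closure of a quasipositive braid
\[
\beta=\prod_{k=1}^{m} w_k\,\sigma_{i_k}\,w_k^{-1}\in B_n .
\]
Since the symplectic disk has Euler characteristic $1 = n - m$, we have $\mathrm{sl}(\mathcal{T}) = m - n = -1$. The closure of $\beta$ bounds a Rudolph braided surface: $n$ parallel horizontal disks joined by $m$ positive half-twisted bands, one for each quasipositive generator. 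This surface is a ribbon-immersed disk. The ribbon singularities occur where a band with core $w_k\sigma_{i_k}w_k^{-1}$ passes through the disks it "conjugates around."

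First, I would put this in contact position. Take the $n$ disks to be pages of the standard open book of $(S^3,\xi_{\mathrm{st}})$ near the braid axis, so that each disk is a Seifert disk for a max-sl transverse unknot. Each positive half-twisted band then corresponds to a transverse ``positive band'' move. Following Rudolph's description, I would isotope each band so that its core is a transverse arc, or more precisely the transverse pushoff of a Legendrian arc in the front. This realizes $\mathcal{T}$ as built from a max-sl transverse unlink by transverse positive band sums. That is the transverse analogue of unknot births followed by ambient surgeries.

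Second, I would carry out the inductive argument from the proof of \cref{thm:decKauffman}, now in the transverse setting. At each stage we have a ribbon disk together with a ribbon derivative that is a max-sl transverse unlink. When a new band is attached, its core crosses the existing sheets transversely. At each crossing, take a small meridional unknot of the band, lying on the pierced sheet. Choose it as the transverse pushoff of a max-tb Legendrian unknot, so that it is a max-sl transverse unknot, and perturb the sheet to contain it. These meridians are small and pairwise split, so the new ribbon derivative remains a max-sl transverse unlink. Finally, perform the cellar-door resolutions of \cref{fig:cellar}. This produces an embedded Seifert surface for $\mathcal{T}$ on which the ribbon unlink is a derivative, by the argument recalled before \cref{lemma:ribbon_derivative}.

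I expect the main obstacle to be checking that the meridional unknots are genuinely max-sl, i.e.\ $\mathrm{sl}=-1$, transverse unknots sitting on the resolved Seifert surface. The transverse realization must respect the surface framing. I would handle this by working in a local Darboux model near each ribbon singularity, where the pierced sheet looks like a standard transverse disk. The meridian can then be taken to be the boundary of a tiny disk in that sheet. Small transverse unknots bounding disks in a Darboux ball have $\mathrm{sl}=-1$, and their surface framing is $0$ because they bound disks in the surface.
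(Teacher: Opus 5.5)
Your skeleton (quasipositive braid, Bennequin ribbon disk, a derivative on the pierced sheets, cellar-door resolution) matches the paper's. However, the step you flag as the main obstacle has a genuine gap.

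\textbf{Why small meridians cannot stay in the braid disks.} In the standard braided picture, $\xi_{\mathrm{st}}=\ker(dz+r^2\,d\theta)$ and the braid disks $D_j$ are horizontal. A closed curve lying in $D_j$ is positively transverse if and only if $\dot\theta>0$ along it, which forces it to wind positively around the braid axis. A small meridian around a single ribbon arc does not enclose the axis, so it is never transverse while it lies in $D_j$.

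\textbf{What reshaping the sheet would require.} The small disk $\Delta$ bounded by the meridian must acquire singular points of the characteristic foliation with $-(e_+-h_+)+(e_--h_-)=-1$. With no singular points, the boundary would have $\mathrm{sl}=0$, contradicting Bennequin. So this is not a small perturbation. It must also be done rel the band, whose two edges are part of the transverse link and pierce $\Delta$ with opposite signs.

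\textbf{Your justification is false.} It is not true that small transverse unknots bounding disks in a Darboux ball have $\mathrm{sl}=-1$. The contactomorphisms $(r,\theta,z)\mapsto(cr,\theta,c^2z)$ shrink a transverse unknot of any self-linking number into an arbitrarily small ball. Choosing the meridian as the pushoff of a max-tb unknot does fix $\mathrm{sl}$. You would still need to show that such a curve exists in the meridional isotopy class in the complement of the band, so that the sheet can be isotoped onto it. That is a real local contact-geometric argument, and the proposal does not supply it.

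\textbf{How the paper avoids this.} The paper chooses different derivative curves.
\begin{enumerate}
\item After a transverse isotopy, the ribbon intersections in $D_j$ sit at pairwise distinct radii $r_{1,j}<\cdots<r_{k_j,j}$.
\item The derivative is the set of concentric round circles $T_{\ell,j}=\{r=r^*_{\ell,j}\}\subset D_j$, with $r_{\ell,j}<r^*_{\ell,j}<r_{\ell+1,j}$. The innermost disk and each annulus between consecutive circles then contain exactly one ribbon arc.
\item These circles are one-strand braids, hence automatically max-sl transverse unknots.
\item They already lie on the surface, so no reshaping is needed, and being coaxial they form an unlink.
\item After resolving the ribbon singularities away from them, they form a derivative. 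In each disk, $[T_{\ell,j}]=[U_{1,j}]+\cdots+[U_{\ell,j}]$ is a unimodular change of basis from your small meridians $U_{i,j}$.
\end{enumerate}
This also makes the band-by-band induction modeled on \cref{thm:decKauffman} unnecessary. The Bennequin surface already displays all ribbon singularities at once as band-through-disk intersections.

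A minor slip: the braid disks are meridian disks pierced by the braid axis, not pages of the open book with that axis as binding.
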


\begin{proof}
As $\mathcal{T}$ is symplectically slice, it is quasipositive \cite{boileau2001quasipositive,rudolph1983seifertribbons}. Specifically, it is transversely isotopic to the closure of a quasipositive braid $\beta\in B_n$ in the standard radial contact structure. Let $\beta = b_1b_2\cdots b_m$ denote the factorization into quasipositive band generators (i.e.\ arbitrary conjugates of the standard Artin generators). This means that each $b_i$ is a band, possibly intersecting the braid axis disks, with a single positive crossing. 

Let $D_{\beta}$ denote the immersed Bennequin surface obtained by attaching the bands $b_1, \dots, b_m$ to the braid axis disks $D_1 \cup \cdots \cup D_n$. Since $\mathcal{T}$ is quasipositive and slice, $D_{\beta}$ is a ribbon disk. See \cref{fig:qpbraid}. 

\begin{figure}[ht]
	\centering
	\begin{overpic}[scale=.35]{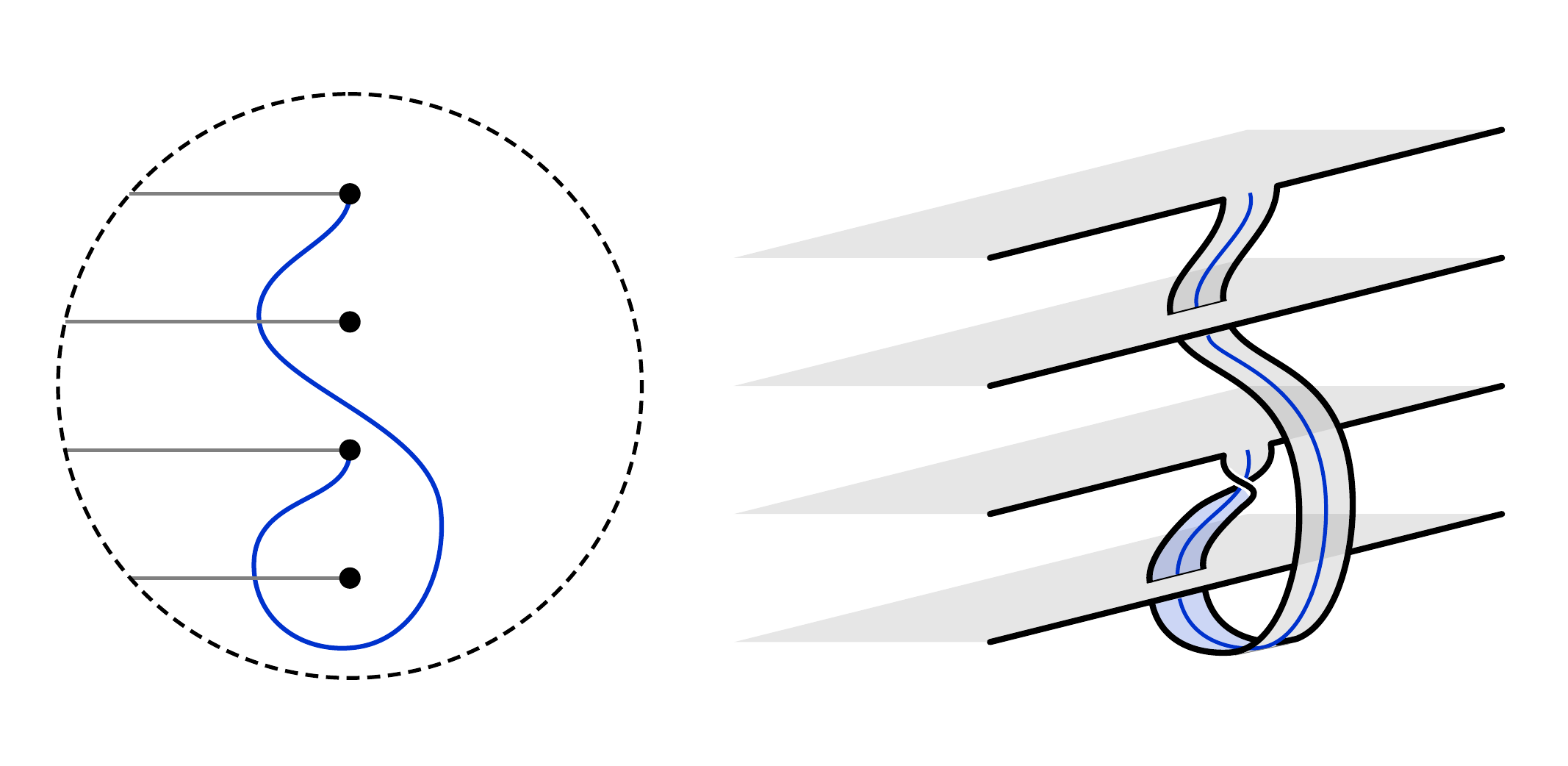}
       
	\end{overpic}
	\caption{A quasipositive braid generator viewed on the left as an element of the mapping class group of a punctured disk (the right-handed twist about the blue arc), and as a band attached to the braid axis disks on the right.}
	\label{fig:qpbraid}
\end{figure}

Up to transverse isotopy, we may assume that each ribbon intersection of $D_{\beta}$ is a radial transverse arc in the interior of the braid disks. (Diagrammatically, this corresponds to arranging for the positive crossing of the band to be close to one of the feet of the band.) Moreover, also after a transverse isotopy, we can assume that the radial coordinates of all ribbon intersections are pairwise distinct. 

Next we describe the derivative link. Assume that the radius of each braid axis disk is $R > 0$. For each of these disks $D_j$, let $0 < r_{1,j} < \cdots < r_{k_j,j} < R$ denote the radial coordinates of the $k_j$-many ribbon intersections supported in $D_j$. Choose radial coordinates $0 < r_{1,j}^* < \cdots < r_{k_j,j}^* < R$ so that $r_{\ell,j} < r_{\ell,j}^* < r_{\ell+1,j}$ for $\ell = 1, \dots, k_j-1$ and $r_{k_j,j} < r_{k_j,j}^*$. Let $T_{\ell,j}$ denote the round circle in $D_j$ of radius $r_{\ell,j}^*$ centered at the origin and let $T = \bigcup_{j=1}^n \bigcup_{\ell=1}^{k_j} T_{\ell,j}$. Note that each $T_{\ell,j}$ is a max-sl transverse unknot, being a radial circle in the standard radial contact structure. Moreover, $T$ is smoothly an unlink. Hence, $T$ is a max-sl unlink. 

Finally, let $\Sigma_{\beta}$ denote the embedded Seifert surface obtained by performing a cellar-door resolution on each ribbon singularity of $D_{\beta}$. We may perform the resolution in sufficiently small neighborhoods supported away from $T$. By construction, $T\subset \Sigma_{\beta}$ is a derivative link. This completes the proof. 
\end{proof}

\begin{lemma}\label{lemma:tranRlink_sympl_slice}
If a transverse knot is a component of a transverse $R$-link, it is symplectically slice. 
\end{lemma}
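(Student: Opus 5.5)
The plan is to mimic the converse direction of \cref{lemma:reg_slice_R_link_component}, replacing Weinstein $2$-handles along Legendrians by Gay's symplectic $2$-handles along transverse knots. Let $T = T_1 \cup \cdots \cup T_g$ be a transverse $R$-link with $T_1 = \mathcal{T}$. Let
\[
(Y,\xi_Y) := S^3_T(0)
\]
be the result of inadmissible transverse $0$ surgery along $T$. By hypothesis this is tight $\#^g(S^1\times S^2)$. Its unique Stein filling is $\natural^g(S^1\times B^3)$, which has strongly convex boundary.

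\textbf{Attaching the handles.} By \cref{lemma:admissible}, applied componentwise, each $T_i$ has a transverse dual knot $T_i^* \subset Y$, and the dual surgery framing on $T_i^*$ is admissible. One point needs care here: the surgeries along the components happen in disjoint neighborhoods, so the lemma applies to all components simultaneously. By \cref{thm:admissible_handle}, I then attach symplectic $2$-handles along $T_1^* \cup \cdots \cup T_g^*$ with the dual framings. The result is a symplectic manifold $X$ with weakly convex boundary. The contact structure on $\partial X$ is obtained from $(Y,\xi_Y)$ by admissible transverse surgery along $T^*$.

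\textbf{Identifying the boundary.} Next I argue that admissible surgery along the dual knot with the dual framing undoes the inadmissible surgery. The resulting boundary should therefore be contactomorphic to $(S^3,\xi_{\mathrm{st}})$, with the belt sphere of the handle attached along $T_1^*$ transversely isotopic to $\mathcal{T}$.

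I expect this to be the main obstacle. Topologically the statement is clear. For the contact structure, I would work in the surgery solid torus $N'$ from the proof of \cref{lemma:admissible}. The admissible surgery removes a smaller standard transverse neighborhood of the core and reglues a solid torus with meridian $\infty$. Since $N'$ minus that neighborhood is a thickened torus whose slopes run from the chosen admissible slope to $n$, the glued-in piece is a tight solid torus with meridional slope $\infty$. Hence it is the standard neighborhood of the original transverse knot $\mathcal{T}$. This is a solid torus classification argument: uniqueness of tight structures on the relevant solid torus with those boundary slopes, in the spirit of Conway's and Baldwin--Etnyre's inverse surgery statements. The same argument identifies the core of the reglued torus, which is the belt sphere, with $\mathcal{T}$.

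\textbf{Conclusion.} Once the boundary is identified, $X$ is a weak symplectic filling of tight $S^3$ obtained from a Stein filling by handle attachments. For $S^3$, weak fillings can be deformed to strong ones. By Gromov \cite{gromov1985pseudo} and Eliashberg, any minimal filling of $(S^3,\xi_{\mathrm{st}})$ is symplectomorphic to a blow-up of $(B^4,\omega_{\mathrm{st}})$. Here $X$ is a homotopy ball, since the $2$-handles algebraically cancel the $1$-handles: the link $T$ is algebraically unlinked and the surgery is $0$-framed. So $X$ has no exceptional spheres, and $X \cong (B^4,\omega_{\mathrm{st}})$.

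Finally, by the "moreover" part of \cref{thm:admissible_handle}, the co-core of the handle attached along $T_1^*$ is a symplectic disk. Its boundary is the belt sphere, which is $\mathcal{T}$ by the previous step. This symplectic disk in $B^4$ exhibits $\mathcal{T}$ as symplectically slice.
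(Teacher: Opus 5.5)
Your proposal is correct and follows the paper's proof essentially step for step: fill tight $\#^g(S^1\times S^2)$ by $\natural^g(S^1\times B^3)$, attach Gay's symplectic $2$-handles along the dual link via \cref{lemma:admissible} and \cref{thm:admissible_handle}, note that the result is a homotopy ball and hence $(B^4,\omega_{\mathrm{st}})$ by \cite{gromov1985pseudo}, and take the symplectic co-cores as slice disks. The only variation is how you identify the new boundary: the paper just observes that a weakly fillable contact structure on $S^3$ is $\xi_{\mathrm{st}}$ and takes for granted that the belt spheres are the original link, whereas your local inverse-surgery argument also addresses the belt spheres (one correction there: the thickened torus left in $N'$ has slopes running from $\infty$ to the dividing slope $0$, not to $n$).
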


\begin{proof}
By assumption, inadmissible transverse $0$-surgery along the $R$-link produces $(\#^g (S^1 \times S^2), \xi_{\mathrm{st}})$. This contact manifold is (Weinstein) filled by $(\natural^g (S^1 \times B^3), \omega_{\mathrm{st}})$. 

Thus, combining \cref{lemma:admissible} and \cref{thm:admissible_handle}, we may attach symplectic $2$-handles with symplectic co-cores to $\natural^g(S^1 \times B^3)$ along the dual link to induce a weak symplectic filling of the boundary. Since the boundary is diffeomorphic to $S^3$ and admits a weak symplectic filling, the boundary is contactomorphic to $(S^3, \xi_{\mathrm{st}})$. Moreover, the original transverse $R$-link bounds the symplectic co-core disks of the $2$-handles. Each component is therefore symplectically slice in the weak filling. 

Because the weak filling of $(S^3, \xi_{\mathrm{st}})$ is constructed by attaching $2$-handles to $\natural^g(S^1 \times B^3)$, it is a homotopy ball and hence a minimal symplectic filling. The unique minimal symplectic filling of $(S^3, \xi_{\mathrm{st}})$ is $(B^4, \omega_{\mathrm{st}})$ \cite{gromov1985pseudo}.
\end{proof}

Next we prove \cref{thm:tranKauffman}.

\begin{proof}
By \cref{lemma:qp_unlink_derivative}, \eqref{part:TK1} implies \eqref{part:TK3}.
Since max-sl transverse unlinks are transverse $R$-links, \eqref{part:TK3} implies \eqref{part:TK2}. Thus, it remains to show that \eqref{part:TK2} implies \eqref{part:TK1}. 

Suppose $\mathcal{T}\subset (S^3, \xi_{\mathrm{st}})$ is a transverse knot with $\mathrm{sl}(\mathcal{T}) = -1$ and $\Sigma\subset S^3$ is a Seifert surface with a transverse $R$-link derivative $T:=T_1 \cup \cdots \cup T_g$. We may assume that the characteristic foliation of $\Sigma$ is Morse-Smale, so that 
\[
-1 = \mathrm{sl}(\mathcal{T}) = -(e_+ - h_+) + (e_- - h_-)
\]
where $e_{\pm}, h_{\pm}$ are the counts of positive (resp.\ negative) elliptic and hyperbolic singular points, respectively. 

In the $0$-surgery $S^3_T(0)$, the image of $\mathcal{T}$ is a topological unknot $U$ (since $\Sigma $ surgers along $T$ to yield a disk). We claim that $
\mathrm{sl}(U) = -1$. Note that each component of the derivative $T$ contributes to two boundary components of the cut-open Seifert surface $\Sigma':=\Sigma - \nu(T)$ in $S^3_T(0)$. When we cap off these two boundary components in $S^3_T(0)$ with standard meridional disks in the transverse surgery torus, the orientation forces one capping disk to contribute a positive elliptic point and the other to contribute a negative elliptic point. Thus, 
\[
\mathrm{sl}(U) = -(e_+ + g - h_+) + (e_- + g - h_-) = -(e_+ - h_+) + (e_- - h_-) = -1.
\]
Since $U$ is a max-sl unknot in $(\#^g S^1\times S^2, \xi_{\mathrm{st}})$, performing a further inadmissible transverse $0$ surgery along $U$ produces $(\#^{g+1} S^1\times S^2, \xi_{\mathrm{st}})$. It follows that $\mathcal{T}\cup T$ is a transverse $R$-link, which by \cref{lemma:tranRlink_sympl_slice} implies that $\mathcal{T}$ is symplectically slice. 
\end{proof}

\bibliography{references}
\bibliographystyle{amsalpha}
\end{document}